\newcommand{\Rmn}[1]{\expandafter\@slowromancap\romannumeral #1@}
\newtheorem{thm}{Theorem}[section]
  \newenvironment{hproof3}{%
  \proof}{\endproof}
  \newenvironment{hproof5}{%
  \proof}{\endproof}
 \newtheorem{Th}[thm]{Theorem}
 \newtheorem{Prop}[thm]{Proposition}
 \newtheorem{Co}[thm]{Corollary}
\newtheorem{Lm}[thm]{Lemma}
\newtheorem{Dfi}[thm]{Definition}
\newtheorem{Rm}[thm]{Remark}
\numberwithin{equation}{section}
\newcommand{\be}{\begin{equation}}
\newcommand{\ee}{\end{equation}}
\newcommand{\bg}{\begin{gather}}
\newcommand{\eg}{\end{gather}}
\newcommand{\ba}{\begin{align}}
\newcommand{\ea}{\end{align}}
\newcommand{\bad}{\begin{aligned}}
\newcommand{\ead}{\end{aligned}}
\newcommand{\R}{\mathbb{R}}
\newcommand{\N}{\mathbb{N}}
\newcommand{\mca}[1]{\mathcal{#1}}
\newcommand{\nf}{\infty}
\def\avint{\mathop{\mathchoice{\,\rlap{-}\!\!\int}
                              {\rlap{\raise.15em{\scriptstyle -}}\kern-.2em\int}
                              {\rlap{\raise.09em{\scriptscriptstyle -}}\!\int}
                              {\rlap{-}\!\int}}\nolimits}
\def\avint{\mathop{\,\rlap{-}\!\!\int}\nolimits}
\def\ovwe{\text{\larger[1.5]$\we$}}
\def\XXint#1#2#3{{\setbox0=\hbox{$#1{#2#3}{\int}$ }
\vcenter{\hbox{$#2#3$ }}\kern-.6\wd0}}
\def\mkt{\mkern2mu}
\def\mko{\mkern1mu}
\def\ga{\gamma}
\def\La{\Lambda}
\def\lf{\left}
\def\ot{\otimes}
\def\de{\delta}
\def\rg{\right}
\def\al{\alpha}
\def\wti{\widetilde}
\newcommand{\we}{\wedge}
\def\dil{\textup{dil}}
\def\rot{\textup{rot}}
\def\std{\textup{std}}
\def\sym{\textup{sym}}
\def\bwe{\textstyle\bigwedge}
\def\loc{\textup{loc}}
\def\vae{\varepsilon}
\def\vp{\varphi}
\def\Om{\Omega}
\def\II{\mathrm{I\!I}}
\def\bII{\vec{\II}}
\def\om{\omega}
\def\p{\partial}
\def\bn{\vec{n}}
\def\bH{\vec{H}}
\def\bv{\vec{v}}
\def\bvt{\vec \vartheta}
\def\vt{\vartheta}
\def\bg{\vec{g}}
\def\g{\nabla}
\def\lan{\langle}
\def\ran{\rangle}
\def\bL{\vec{L}}
\def\bR{\vec{R}}
\def\bw{\vec{w}}
\def\vet{\vec \eta}
\def\bc{\vec{c}}
\def\vp{\varphi}
\def\bP{\vec{\phi}}
\def\bP{\vec{\Phi}}
\def\dvol{d\textup{vol}}
\def\bul{\bullet}
\def\si{\sigma}
\def\Si{\Sigma}
\newcommand{\res}{\mathbin{\hbox{\vrule height 5pt width .4pt depth 0pt\vrule height .6pt width 4pt depth 0pt}}} 
\newcommand{\metricsub}[3]{_{
    \mkern-#3mu
    \smash{\lower #1\hbox{$\scriptscriptstyle #2$}}
}}
\newcommand{\resg}{\res_g}
\newcommand{\sbul}{\mathbin{
  \mathchoice{\scalebox{0.7}{$\bullet$}}{\scalebox{0.7}{$\bullet$}}%
             {\scalebox{0.7}{$\bullet$}}{\scalebox{0.7}{$\bullet$}}}
}
\newcommand{\bulg}{\sbul_g}
\newcommand{\dwe}{\mathbin{\dot{\wedge}}}
\newcommand{\dres}{\mathbin{\dot{\res}_g}}
\newcommand{\wres}{\mathbin{\ovs{\ovwe}{\res}_{\mkern -4mu g}}}
\newcommand{\rres}{\mathbin{\ovs{\res}{\res}_g}}
\newcommand{\ov}[1]{\overline{#1}}
\newcommand{\ovs}[2]{\overset{#1}{#2}}
\DeclareMathOperator{\supp}{supp}
\newcommand{\lap}{\Delta}
\newcommand{\Imm}{\mathrm{Imm}}
\author[1]{Yann Bernard}
\author[2]{Tian Lan}
\author[3]{Dorian Martino}
\author[4]{Tristan Rivi\`ere}
\affil[1]{School of Mathematics, Monash University, 3800 Victoria, Australia}
\affil[2,3,4]{Department of Mathematics, ETH Zürich, 101 Rämistrasse, 8092 Zürich, Switzerland}
\title{The Regularity of Critical Points to Scale-Invariant Curvature Energies in Dimension 4}
\date{\today}
\begin{document}

\maketitle

\begin{abstract}
We consider a class of scale-invariant curvature energies defined on immersed $4$-dimensional manifolds and prove that weak immersions that are critical points of such energies are analytic in any given local harmonic chart. Because of the criticality of this variational problem, the regularity result is obtained through the identification of conservation laws by applying Noether theorem. The resulting identities generate a lower order elliptic system of PDEs to which methods from integrability by compensation and interpolation theory are applied.
\end{abstract} 

\noindent \textbf{Keywords:} Integrability by compensation, generalized Willmore energies, Hodge decomposition.\\
\textbf{MSC2020 classification:} 35G35, 35D30, 35J48, 35J70, 35J93, 53A07, 58E15, 58E20.\\

\section{Introduction}\label{sec:intro}

The study of critical points of scale-invariant Lagrangians, such as the Dirichlet energy of maps from a given surface into a given Riemannian manifold \cite{H2002}, the Willmore energy \cite{LaMaRi25,Ri16} or the Yang--Mills functional \cite{Ri20}, usually requires specific tools from integration by compensation and compensated compactness. 
In this context, the scale-invariance property is generally related to some critical Sobolev space $W^{s,p}(B^n)$ with $sp=n$ and $p>1$, which embeds into all $L^q$ spaces for $q<\infty$, but not into $L^{\infty}$. However, the geometric context provides a special structure in the Euler--Lagrange systems via the existence of a Coulomb gauge or the conservation laws induced by Noether theorem. In order to exploit this extra-structure, one needs to use refinement of Lebesgue spaces in order to be able to apply elliptic regularity, such as the Hardy space $\mathcal{H}^1$ for harmonic maps and Willmore surfaces (i.e. 2-dimensional problems), or Lorentz spaces for Yang--Mills connections (i.e. 4-dimensional problem). An additional difficulty arises from the tensorial analysis, meaning that we are studying non-linear systems and not equations, where specific tools such as the maximum principle can be decisive. In this article, we consider the analysis of scale-invariant Lagrangians defined on immersed 4-dimensional manifolds. \\

An example of such a Lagrangian appears in the computation of the renormalized volume of 5-dimensional minimal submanifolds of the $(m+1)$-dimensional hyperbolic space with boundary at infinity in $\R^m$, as independently proved by Graham--Reichert \cite{graham2020} and Zhang \cite{zhang} in the context of AdS/CFT correspondence. The computation of the renormalized volume of minimal surfaces has been introduced by Graham--Witten \cite{graham1999}, where they prove that such a computation leads to a conformally invariant functional on closed submanifolds of any even dimension $n\geq 2$ in $\R^m$ with $m\geq n+1$. In the case of 4-dimensional submanifolds $\Sigma^4$ of $\R^m$, Graham--Reichert \cite{graham2020} and Zhang \cite{zhang} obtained the following functional defined for any immersion $\bP \in \textup{Imm}(\Sigma^4,\R^m)$,
\begin{equation}\label{eq:GR}
    \mathcal{E}_{GR}(\bP)\coloneqq \int_{\Sigma^4} \Big(\big|\pi_{\bn_{\bP}}d \vec H_{\bP} \big|^2_{g_{\bP}} - \big| \bH_{\bP}\cdot \bII_{\bP} \big|^2_{g_{\bP}} + 7\, \big| \bH_{\bP} \big|^4 \Big)\, d\textup{vol}_{g_{\bP}}.
\end{equation}
In the above expression, given $\bP\in \textup{Imm}(\Sigma^4,\R^m)$, we denoted $g_{\bP}$ the first fundamental form of $\bP$, $\bII_{\bP}$ its second fundamental form, $\bH_{\bP} \coloneqq \frac{1}{4} \textup{tr}_{g_{\bP}}(\bII_{\bP})$ the mean curvature vector, and $\pi_{\bn_{\bP}}$ the normal projection. Such conformally invariant functionals whose Euler--Lagrange equation has a leading order term of the form $\Delta_{g_{\bP}}^2 \vec{H}_{\bP}$ were \textit{defined} as generalized Willmore functionals by Gover--Waldron \cite{gover2020}, see also \cite{gover2017,blitz2023generalized,graham2017}. Additional examples of generalized Willmore energies have recently been constructed in \cite{AG2024,M2024}. The goal of this work is to develop the regularity theory for critical points of such Lagrangians, where the invariance by choice of coordinates (due to the geometric meaning of the functional) constitutes a non-negligible difficulty. \\

As the principal part of $ \mathcal{E}_{GR}$ (see~\eqref{EGRmod}), the Dirichlet energy of the mean curvature also appears in computer-aided design and computer-aided manufacturing, where a typical problem is to find a robust way of designing fair surfaces (highly regular surfaces). To that extent, various approaches based on variational principles have been explored, see for instance \cite{hagen1993,greiner1994,schneider2000,welch1992,xu2006}. In \cite{du2005,welch1992,xu2006}, different possibilities based on the mean curvature were studied, such as the Willmore energy (the $L^2$ norm of the mean curvature) or the Canham--Helfrich energy. In \cite{moreton1992}, Moreton--Séquin proposed to consider functionals containing derivatives of the second fundamental form in order to impose an a priori higher regularity on the surface. This approach has later been developed by You--Comninos--Zhang \cite{you2004}, Xu--Zhang \cite{xu2006}, and You--Zhang \cite{zhang2004}, which led to the numerical study of the Dirichlet energy of the mean curvature (leading to a sixth-order Euler--Lagrange equation). One of the difficulties of the development of this approach is the absence of a theoretical framework to study the analytical properties of such a nonlinear equation on submanifolds.\\

A first step in this direction has been performed by Caffarelli--Stinga--Vivas \cite{caffarelli2024}, where they introduced the question of the regularity of critical points of the Dirichlet energy of the mean curvature (here $n\geq 1$ is arbitrary):
\begin{equation}\label{eq:Dir_n}
     \vec{\Phi}\in \Imm(B^n,\R^{n+1}) \mapsto \int_{B^n} |d H_{\bP}|^2_{g_{\bP}}\ d\textup{vol}_{g_{\bP}}.
\end{equation}
It can be shown (see \cite{xu2006}) that the Euler--Lagrange equation of $E$ for $n=2$ is of the following form
\begin{equation}\label{eq:EL_Dirichlet}
    \begin{cases}
        \lap_{g_{\bP}}^2 H_{\bP}+P(g_{\bP},\II_{\bP},\mko d H_{\bP},\mko \Delta_{g_{\bP}} H_{\bP})=0, \\[2mm]
        \big|P(g_{\bP},\II_{\bP},\mko d H_{\bP},\mko \Delta_{g_{\bP}} H_{\bP})\big| \leq C(g_{\bP})\mko \big( |\II_{\bP}|^2_{g_{\bP}}\mko |\Delta_{g_{\bP}} H_{\bP}| + |\II_{\bP}|_{g_{\bP}}\mko |dH_{\bP}|_{g_{\bP}}^2 \big).
    \end{cases}
 \end{equation}
 In order to simplify the setting, the authors in \cite{caffarelli2024} restricted themselves to the setting of immersions $\bP$ describing graphs of functions over a given bounded $C^2$ open set $\Omega\subset \R^n$. 
For a given $h\in C^{1,\al}(\ov\Om)$, they produced $u\in C^{2,\al}(\ov \Om)$ and $H\in C^{1,\al}(\ov \Om)$ such that $H$ is the scalar mean curvature of $\bP_u(x)=(x,u(x))$ and, for this fixed $u$, the function $H$ solves the Dirichlet minimization problem (denoting $g_u=\bP_u^* \,g_{\std}$)
\[
    \min_{H-h \,\in\, W^{1,2}_0(\Om)}  \frac 12\int_{\Om}|dH|_{g_{u}}^2 \,\dvol_{g_{u}}.
\]
In other words, instead of solving \eqref{eq:EL_Dirichlet}, the authors obtained a solution of $\Delta_{g_{u}} H=0$, a fourth-order equation instead of the sixth-order equation that was looked for in \cite{moreton1992,xu2006,graham2020}. Moreover, they established existence of minimizers within an admissible set in which $H$ is the mean curvature of $\bP_u$, the boundary values $H\big|_{\p\Om}$ and $u\big|_{\p \Om}$ are prescribed, and $\|H\|_{W^{1,\nf}(\Om)}+\|H\|_{W^{2,2}(\Om)}\le C_0$ for a constant $C_0>0$ depending only on $\Om$ and $h\big|_{\p\Om}$.  \\

In this paper, we aim at providing a general framework for the analysis of functionals such as \eqref{eq:GR} or \eqref{eq:Dir_n} without any a priori assumption on whether the immersion $\bP$ parametrizes a graph or not, or whether one has a priori $C^2$ bounds. To do so, we start from the following heuristic argument. If we consider that $\bH_{\bP} = 4^{-1}\, \Delta_{g_{\bP}} \bP$, and we forget (as in \cite{caffarelli2024}) the dependence between $\bP$ and $g_{\bP}$ for a moment, then the energy \eqref{eq:Dir_n} should provide a bound on the $W^{3,2}(B^n)$ norm of $\bP$. By Sobolev embeddings, if $n\leq 3$, then we freely obtain estimates of the form $g_{\bP}\in C^0$ and $\vec{n}_{\bP}\in C^0$ (the Gauss map), i.e. we are in a subcritical setting where one can expect to be able to find minimizers and to obtain regularity estimates by standard arguments of calculus of variations. This is not valid anymore in dimensions $n\geq 4$ and one needs to develop a refined analysis in this setting. Pursuing this heuristic, one can expect to still be able to recover some regularity properties in dimension $n=4$. Indeed, this is the critical setting where an immersion $\bP$ with a priori $W^{3,2}(B^4)$ estimates verifies that the induced metric $g_{\bP}$ has coefficients in $W^{2,2}(B^4)$, and thus in $\text{VMO}(B^4)$ (that is to say, almost $C^0$). Hence, we will restrict ourselves to the case $n=4$ in the remainder of the article.\\

As explained above, the natural variational framework in which the Lagrangian \eqref{eq:Dir_n} can be studied is not the one of $C^3$ immersions, but the one of {\it weak immersions} in the critical Sobolev Space $W^{3,2}(\Sigma^4,{\R}^m)$ ($m\ge 5$) introduced successively by the fourth author in 2-dimension in \cite{Riv14, Ri16} and more recently by the last two authors in arbitrary dimensions in \cite{MarRiv2}. We adopt the following definition.
\begin{Dfi}\label{defweakimm}
    Let $(\Sigma^4,h)$ be a 4-dimensional closed oriented Riemannian manifold. We define
    \begin{align*}
		\mathcal{I}_{1,2}(\Sigma^4,\R^m)\coloneqq \left\{
		\vec{\Phi}\in W^{3,2}(\Sigma^4,\R^m) \colon 
			\displaystyle \exists\, c_{\vec{\Phi}}\ge 1,\ c^{-1}_{\vec{\Phi}} h \leq g_{\vec{\Phi}} \leq c_{\vec{\Phi}}\, h
		\right\}.
	\end{align*} 
    Here $g_{\bP}=\bP^* g_{\std}$ denotes the metric induced by the weak immersion $\bP$. We define $\mathcal{I}_{1,2}(B^4,\R^m)$ analogously, replacing $h$ by the Euclidean metric on $B^4$.
    \end{Dfi}

By definition, a weak immersion in $\mathcal I_{1,2}(\Sigma^4,\R^m)$ might not be $C^1$ and therefore does not necessarily locally define a graph. This justifies the denomination ``weak''.\footnote{Observe nevertheless that the hypothesis $\vec{\Phi}\in W^{3,2}(\Sigma^4,\R^m)$ implies by Poincar\'e inequality that
$\nabla\vec{\Phi}$ is in the Vanishing Mean Oscillation Space (VMO) which is the closure of $C^0$ functions for the Bounded Mean Oscillation norm and by being a ``weak immersion'' one is ``almost'' $C^1$.} Using $g_{\bP}$-harmonic coordinates (that is, coordinates $(z^i)_{1\le i\le 4}$ with $\lap_{g_{\bP}} z^i=0$), the last two authors proved in~\cite{MarRiv2} that any $\bP\in \mathcal I_{1,2}(\Sigma^4,\R^m)$ induces a $C^1$ differential structure on $\Sigma^4$, that is to say an atlas of coordinates on $\Sigma^4$ composed of harmonic charts in which the coefficients of the metric $g_{\bP}$ are $C^0$ and the transition charts are $C^1$. \\

Inspired by the setting of generalized Willmore energies, we study geometric energies of the following type, where $F$ is a real polynomial on $(g^{ij})$ and the ambient coordinates of $(\bII_{ij})$:
\begin{align}\label{firdefE}
    \bP\in \mathcal{I}_{1,2}(\Sigma^4,\R^m) \mapsto \int_{\Si} \Big(| d\bH|_g^2+F(g,\bII)\Big)\, \dvol_g.
\end{align}
We are interested in the case when the functional in \eqref{firdefE} is invariant under dilations and rotations in the ambient space, and invariant under reparametrization. In order for $F(g,\bII)\,\dvol_g$ to be pointwise invariant under dilations and reparametrization, the polynomial $F$ must be homogeneous of degree $4$ in the components of $g^{-1}\bII$, see \cite{mondino2018,Alexakis12}. More precisely, $F(g,\bII)$ is a linear combination of \textit{geometric complete contractions} of the form
\begin{equation}\label{comcon}
    \text{contr}(\bII_{i_1j_1}\ot \bII_{i_2j_2}\ot\bII_{i_3j_3}\ot \bII_{i_4j_4} ).
\end{equation}
In addition, if $F(g,\bII)\,\dvol_g$ is pointwise invariant under rigid motions, then by the first fundamental theorem of invariant theory (see for instance~\cite[Thm.~2.9.A]{weyl97} and~\cite[Appendix~A]{Bailey94}), the contractions in~\eqref{comcon} are generated by inner products in $\R^m$ among $\big(\bII_{i_kj_k}\big)$. By direct computations, we obtain that $F$ must be a linear combination of the following types:\footnote{Instead of polynomials, one may consider for instance the function $f(g,\bII)=|\bH|\cdot|\bII|_g^3$, in which case $f(g,\bII)\,\dvol_g$ is also invariant under dilations and rotations in the ambient space. However, since $f$ is not smooth at $\bH=0$, the Euler--Lagrange operator associated to $\int f(g,\bII)\,\dvol_g$ may not be smooth even for smooth immersions.  }
\begin{equation}\label{typesPs}
    \begin{dcases}
        P_1(g,\bII)= \big|\bH\big|^4,\qquad& P_2(g,\bII)=\big|\bH\big|^2\ \big|\bII\big|_g^2,\\[2mm]
        P_3(g,\bII)= \big| \bH\cdot \bII \big|_g^2,\qquad& P_4(g,\bII)= \big|\bII\big|_g^4,\\[2mm]
        P_5(g,\bII)= \Big(\bII^i_j\cdot \bII^k_\ell\Big)\ \Big(\bII^j_i\cdot \bII^\ell_k \Big),\qquad& P_6(g,\bII)=\Big(\bH\cdot \bII^i_j\Big)\ \Big(\bII^j_k\cdot \bII^k_i \Big),\\[2mm]
        P_7(g,\bII)= \Big(\bII^i_j\cdot \bII^k_\ell \Big) \ \Big(\bII^j_k\cdot \bII^\ell_i\Big),\qquad& P_8(g,\bII)= \Big(\bII^i_j\cdot \bII^j_k\Big)\ \Big(\bII^k_\ell\cdot \bII^\ell_i\Big).
    \end{dcases}
\end{equation}
In this setting, we can rewrite the functional \eqref{firdefE} in the following form. Given $\vec c=(c_1,\ldots,c_8)\in \R^8$, we define
\begin{align}\label{defE}
   \forall \bP\in \mathcal{I}_{1,2}(\Sigma^4,\R^m),\qquad E_{\vec c}(\bP)=\int_{\Si^4} \bigg(| d\bH|_g^2+\sum_{s=1}^8 c_s P_s(g,\bII)\bigg) \, \dvol_g.
\end{align}
Let $\pi_T$ be the orthogonal projection onto the tangent bundle of $\bP(\Si^4)\subset \R^m$. Since $\pi_T\mko \p_i\bH= -\big(\bH\cdot \bII^j_i\big)\mko \p_j \bP$, we have
\begin{align*}
   |\pi_{\bn} \mko d\bH|_g^2= |d\bH|^2_g-|\pi_T\mko d\bH|_g^2=|d\bH|_g^2-P_3(g,\bII).
\end{align*}
It follows that
\begin{equation}\label{EGRmod}
    \mathcal{E}_{GR}(\bP)=\int_{\Si^4} \Big(| d\bH|_g^2-2\mko P_3(g,\bII) +7\mko P_1(g,\bII)\Big)\, \dvol_g.
\end{equation}
Hence the functional $\mathcal{E}_{GR}$ can be represented as in~\eqref{defE}. One can also prove that the Dirichlet energy of $\vec\II$ is proportional to the Dirichlet energy of $\vec H$ plus a linear combination of the terms $P_1,\ldots,P_8$, see for instance the proof of Lemma B.9 in \cite{M2024}. In addition, we note that when $m=5$ (in codimension $1$), we have $P_2=P_3$, $P_4=P_5$, and $P_7=P_8$. Consequently, in codimension 1 there are only $5$ linearly independent geometric energies invariant under dilations and rotations which are polynomials in the second fundamental form. \\

A weak immersion $\bP\in \mathcal I_{1,2}(\Sigma^4,\R^m)$ is said to be a critical point of $E_{\vec c}$ if for any $\vec w\in C_c^\infty(\Si^4,\R^m)$, it holds that
\[
   \frac d{dt} \left. E_{\vec c}\left( \vec\Phi+t\mkt \vec w \right)\right|_{t=0}=0.
\]
The main result of the present article is as follows. 
\begin{Th}
\label{th-main}
Let $\Sigma^4$ be a closed orientable smooth manifold of dimension $4$ and $\vec{c}\in\R^8$. Then a critical point $\vec{\Phi}\in \mathcal I_{1,2}(\Sigma^4,\R^m)$ of $E_{\vec c}$ is real-analytic in $g_{\bP}$-harmonic coordinates.
\end{Th}

The strategy for the proof of Theorem~\ref{th-main} takes its roots in the two dimensional counterpart, the proof of the regularity of weak critical points to the Willmore functional.
In dimension two, the conservation laws were the main tools for performing the variational analysis of the Willmore energy as it has been initiated in \cite{Riv08}. The first author later discovered in \cite{Ber} that these conservation laws follow the Noether theorem, using the invariance of the Willmore energy by the Möbius group. Indeed, the Euler--Lagrange equation is a priori not compatible with the Willmore energy (the equation contains a term roughly of order $H_{\bP}^3$ but the Willmore equation controls only the $L^2$ norm of the mean curvature, we refer to \cite{LaMaRi25,Riv08} for a discussion of this problem).
The invariance by translation allows first to write the Euler--Lagrange equation in divergence form, which in return, makes the equation compatible with the natural variational assumptions that the immersion is in $W^{1,\infty}\cap W^{2,2}$. Then the invariance by dilation and rotations permits to rewrite the Willmore equation which is a priori a fourth order degenerate elliptic PDE (as first computed by S. Poisson in \cite{poisson}) into a second order nondegenerate elliptic system with critical nonlinearities to which methods from the integrability by compensation theory can be applied.
In the original proof the role of the local isothermic coordinate in which the metric is continuous had been used.
Recently, the second author in \cite{Lan} provides an alternative proof without using these very special coordinates, which exist exclusively in dimension 2. In generic coordinates, the elliptic system contains coefficients in $L^\infty\cap W^{1,2}$. \\

Coming now to the four-dimensional case, the application of Noether theorem is reminiscent of the two-dimensional Willmore problem though the equations are very different and substantially more complex. The absence of isothermal coordinates moreover requires a preparatory analysis on elliptic systems with Sobolev coefficients to which a full preliminary section is devoted.
Let $\bP\in \mathcal I_{1,2}(B^4,\R^m)$. Throughout this paper, we shall often omit the subscript ${\vec{\Phi}}$ for $g_{\bP}$, $H_{\bP}$, etc. when there is no ambiguity. We will prove in Section \ref{sec:pfmainThm}, that the Euler--Lagrange equation of $E$ is of the form (see the convention in Section \ref{geono}):
\begin{numcases}{}
d*_g \vec V=0,\label{intrd*V=0}\\
        \vec V=\frac 12\mko d\lap_g\bH+\vec l_0+\sum_{s=1}^8c_s\vec l_s,  \label{defV1st}\\
        \vec l_0\in W^{-1,\frac 43}+L^1(B^4,\R^m\ot \R^4)\label{intrl0},\\[1.2ex]
        \vec l_s\in L^1(B^4,\R^m\ot \R^4),\quad 1\le s\le 8.\label{intrls}
\end{numcases} 
 By weak Poincar\'e lemma (see for instance \cite[Cor.~3.4]{Costa10}), there exists $\bL\in W^{-1,(2,\infty)}\big(B^4,\R^m\ot \bwe ^2 \R^4\big)$ such that 
 \begin{equation}
     d *_g \bL=*_g \,\vec V. \label{d*gL=*gV}
    \end{equation}
    One of the main achievements of this work, is the development of the analysis required to study solvability problems associated with the Hodge--Dirac operator $d+d^{*_g}$ for differential forms in negative Sobolev spaces, together with a background metric $g$ that is merely $L^{\infty}\cap W^{1,n}(B^n)$ and thus, \textit{a priori not continuous}. As a result of Section \ref{sec:hod-dec}, we shall prove that $\vec L$ can be chosen such that
    \begin{equation}
        d\bL=0. \label{dL0=0int}
    \end{equation}

Invariance of $E$ under dilations and rotations in $\R^m$ yields the corresponding Noether currents $\bR$ and $S$ analogously to the $2$-dimensional case in~\cite{Riv08}, with the key difference that in this four-dimensional setting, the maps $\bL$, $\vec{R}$, and $S$ are 2-forms (as opposed to 0-forms in the Willmore case). The codifferentials $d^{*_g} \vec{R}$ and $d^{*_g} S$ are determined by $\bL$. In this setting, the regularity and properties of $\bL$, $\vec{R}$, and $S$ depend critically on the choices of their differentials $d\bL$, $d\vec{R}$, and $dS$, though their codifferentials remain fixed. Another major difficulty is to find structure equations that allow us to deduce estimates on $\Delta_g \bH$ from the regularity estimates on $\vec{R}$ and $S$, which requires a careful analysis of differential forms. The study of regularity properties for generalized Willmore energies in higher dimensions will be the topic of a future work.

\subsubsection*{Organization of the paper.} 
  In \Cref{sec-preliminaries}, we set some notations and record some estimates and identities that will be used later. In Section \ref{sec:hod-dec}, we solve the Dirichlet problem for $\lap_g$, which enables us to find $\bL$ satisfying~\eqref{d*gL=*gV}--\eqref{dL0=0int}, and select $d\vec{R}$, $dS$ as closed forms within specified regularity classes. In Section \ref{sec:strid}, we prove the structural identities and estimates necessary to manipulate the Noether currents coming from the invariance of $E_{\vec{c}}$ by isometries and dilations. Finally, we derive the conservation laws and prove Theorem \ref{th-main} in Section~\ref{sec:pfmainThm}.

\subsubsection*{Acknowledgements.}
This project is financed by the Swiss National Science Foundation, project SNF 200020\textunderscore219429.



\section{Preliminaries}\label{sec-preliminaries}
\subsection{Notation} \label{geono}
\begin{enumerate}[(i)]
\setlength{\itemsep}{0pt}   
\item Write $\p_{x^i}= \frac \p{\p x^i}$, and           abbreviate to $\p_i$ when there is no ambiguity. 
\item In the Euclidean space $\R^n$, we define $B_r(x)\coloneq \{y\in \R^n\colon |y-x|<r\}$, and write $B^n\coloneq B_1(0)\subset \R^n$. The phrase ``for every ball $B_r\subset U$'' refers to every ball of radius $r$ contained in $U$.
\item We denote by $\R^{m\times n}$ the space of $m\times n$ real matrices, and by $\R^{n\times n}_{\text{sym}}$ the space of $n\times n$ symmetric real matrices.
    \item We use $C(\alpha,\beta,\dots)$ to denote a positive constant depending on $\alpha,\,\beta,\dots$ only. Similarly, when a term $A$ depends only on $\alpha,\,\beta,\dots$, we write $A=A(\alpha,\beta,\dots)$.
    \item For open sets $U$ and $V$ of a given manifold, we write $V\Subset U$ if $\overline V$ is compact and $\overline V\subset U$.
    \item Let $\Sigma$ be a closed smooth manifold, $E$ be a vector bundle over $\Sigma$, we denote by $L^{p}(\Si,E)$ the space of $L^{p}$ sections of $E$. This convention also applies to the other function/distribution spaces.
\item We denote by $\mathcal L^n$ the Lebesgue measure on $\R^n$. For $U\subset \R^n$ with $\mathcal L^n(U)<\infty$, $f\in L^1(U)$, we write $\fint_U f\coloneq(\mathcal L^n(U))^{-1}\int _U f$.
\item For an open set $U\subset \R^n$, we denote by $\mathcal D'(U,\R^m)$ the space of $\R^m$-valued distributions on $U$ with the weak$^*$ topology, and we write $\mathcal D'(U)=\mathcal D'(U,\R)$.
 \item 
    Let $(V, g)$ be an $n$-dimensional inner product space with a positively oriented orthonormal basis $(e_1,\dots,e_n)$. The \textit{Hodge star operator} $*_g$ (see for instance \cite[Sec.~3.3]{Jost17}) is defined as the unique linear operator from $\bwe ^k V$ to $\bwe ^{n-k}V$ satisfying 
    \[
    \alpha\we *_g\, \beta=\langle \alpha,\beta  \rangle_g\, e_1\we\cdots \we e_n, \qquad \text{for all }\, \al,\beta\in \bwe^k V.\]
   When $V=\R^m$ and $g=g_{\text{std}}$ is the standard inner product on $\R^m$, we write $\star$ instead of $*_g$.
   \item Let $U\subset \R^n$ be an open set and let $g$ be a Riemannian metric on $U$. On the space of differential $\ell$-forms on $U$, we define the
\textit{codifferential} and \textit{Laplace} operators (cf. \cite[Defs.~3.3.1--3.3.2]{Jost17}) by
\begin{equation}
\label{defd*glap}
 d^{*_g}\coloneq(-1)^\ell*_g^{-1}d\,*_g=(-1)^{n(\ell+1)+1}*_gd\,*_g\qquad \text{and} \qquad \Delta_g\coloneq-(d\mko d^{*_g}+d^{*_g}d).
\end{equation} 
 If $\al\in C^\nf\big(U,\bwe^\ell T^* U\big)$ and $\beta \in C^\nf\big(U,\bwe^{\ell+1} T^* U\big)$ with at least one of $\al,\beta$ compactly supported in $U$, then we have
\begin{equation}\label{d^*adjoi}
    \int_U \lan d\al,\beta \ran_g \,\dvol_g = \int_U \lan \al,d^{*_g}\beta\ran_g\,\dvol_g.
\end{equation}
When $\ell=0$, $\lap_g$ coincides with the Laplace--Beltrami operator on functions.
   \item Let $U\subset \R^n$ be open and bounded. We call $U$ a \textit{Lipschitz domain} if for each $y\in \p U$, there exist $r>0$ and a Lipschitz function $f\colon \R^{n-1}\rightarrow\R$ such that, upon rotating and relabeling the coordinate axes if necessary and writing $x=(x',x_n)$, we have
\[
    U\cap B_r(y)=\{x\in B_r(y):x_n>f(x')\}.
\]
Equivalently, we say $\p U$ is Lipschitz or in $C^{0,1}$.
 \item Throughout this paper, we use the Einstein summation convention, and set $\delta_i^j=\de_{ij}=\mathbf 1_{i=j}$. We leave out the symbols $\otimes$ for sections of $\bwe \R^m\ot \bwe T^*U$ when there is no ambiguity. For instance, we write $\p_i\bP \,dx^j=\p_i\bP\ot dx^j$.
\item \label{not-met}Let $U\subset\R^{4}$ be open and $\bP\colon U\to\R^{m}$ be an immersion. Set $g=g_{\bP}=\bP^*g_{\text{std}}$, where $g_{\std}$ is the Euclidean metric on $\R^m$. Denote 
\[
   g_{ij}=\p_i\bP\cdot \p_j\bP,\quad (g^{ij})=(g_{ij})^{-1},\quad  \det g=\det(g_{ij}),\quad d\textup{vol}_g=(\det g)^{\frac 12}dx^1\we\cdots \we dx^4.
\]
The pullback metric induces a pairing on $T^* U$ with $\lan dx^i,dx^j \ran_g=g^{ij}$; we denote by $|\cdot|_{g}$
the associated pointwise norm. We also write $|\cdot|_{\R^{m}}$ for the
Euclidean norm in $\R^{m}$ (abbreviated to $|\cdot|$ when unambiguous).

Let $\pi_{\bn_{\bP}}$ denote the orthogonal projection onto the normal bundle of $\bP(U)\subset \R^m$. We define the \textit{Gauss map} and \textit{second fundamental form} \begin{gather*}
\bn_{\bP}\coloneq \star \,\frac{\p_1 \bP\we \p_2\bP\we \p_3\bP  \we \p_4 \bP}{|\p_1 \bP\we\p_2\bP  \we  \p_3 \bP \we\p_4 \bP|},\\
\bII_{\bP}(X,Y)\coloneq\pi_{\bn_{\bP}} X(d\bP(Y)),\qquad \text{for all } X,Y\in T_p U\text{ and all } p\in U.
\end{gather*}
We shall often omit the subscript ${\vec{\Phi}}$ when there is no ambiguity. We denote 
\[\bII_{ij}\coloneq \bII\Big(\frac \p{\p x^i},\frac{\p}{\p x^j}\Big)\quad\text{and}\quad \bII_i^j\coloneq g^{jk}\bII_{ik}.
\]
The \textit{mean curvature vector} is defined by
\[
\bH\coloneq \frac 14 \mkt \mbox{tr}_{g}\,\bII=\frac 14\mkt g^{ij}\,\bII_{ij}.
\]
 \item \label{not-overset} We define a bilinear map $\dwe\colon\big( \R^m\ot\bwe^{k_1} T^*_p U\big)\times\big(\R^m\ot\bwe^{k_2} T^*_pU\big) \rightarrow  \bwe^{k_1+k_2} T^*_pU$ by 
 \[
              (\vec u_1 \mkt dx_I)\dwe(\vec u_2\mkt dx_J)=( \vec u_1\cdot  \vec u_2) \mkt dx_I\wedge dx_J,
\]
where $\vec u_1,\vec u_2\in\R^m$, $I,J $ are multi-indices. If one of $k_1,k_2$ is $0$, we use $\cdot$ instead of $\dwe$ for convenience. Similarly, for $\vec v,\vec v_1,\vec v_2\in \bwe\R^m$ and multi-indices $I,J$, we define bilinear maps $\we$ and $\overset{\ovwe}{\wedge}$ by \begin{align*}
    &dx_I \we (\vec v\, dx_J)=\vec v\, dx_I \we dx_J,\qquad
         \vec v_1 \we (\vec v_2\mkt dx_J)=( \vec v_1 \we  \vec v_2)\mkt  dx_J,\\[0.3ex]
         & (\vec v_1 \mkt dx_I)\overset{\ovwe}\wedge(\vec v_2\mkt dx_J)=( \vec v_1 \we  \vec v_2) \mkt dx_I\wedge dx_J.
         \end{align*}
\item \label{con-formsob}Unless explicitly stated otherwise, we identify $T_x^*\R^n$ with $\R^n$ for $x\in \R^n$, and write $\nabla$, $|\cdot|$ for the flat Euclidean derivative and
pointwise tensor norms. We reserve $\nabla^g$, $\lan\cdot,\cdot \ran_g$, and $|\cdot|_g$ for the metric $g$. 
 For an open set $U\subset \R^n$, we write $\al\in W^{k,p}\big(U,\bwe^\ell \R^n\big)$  if $\alpha=\sum_{|I|=\ell}\alpha_I\mko dx^I$ with each $\al_I\in W^{k,p}(U)$, where $I$ ranges over all strictly increasing multi-indices of length $\ell$. We set
\begin{equation}\label{convsonab}
    |\nabla\alpha|^2\coloneq\sum_{i=1}^n\sum_I |\partial_i\mko \alpha_I|^2,\qquad 
    \|\al\|_{W^{k,p}(U)}\coloneq\sum_{I} \|\al_I\|_{W^{k,p}(U)}.
\end{equation}
Similarly, we write $X\in W^{k,p}(U, TU)$ if $X=X_j\mko \frac {\p}{\p x^j}$ with each $X_j\in W^{k,p}(U)$, and set 
\begin{equation}\label{sobvecfie}
    |\nabla X|^2\coloneq\sum_{i=1}^n\sum_{j=1}^n |\partial_i\mko X_j|^2,\qquad 
    \|X\|_{W^{k,p}(U)}\coloneq\sum_{j=1}^n \|X_j\|_{W^{k,p}(U)}.
\end{equation}
The same convention applies to any Banach function/distribution space (e.g. $L^{p}$, $W^{k,(p,q)}$).
\end{enumerate}
\subsection{A useful lemma}\label{sec:uselem}
We define the \textit{interior product} and \textit{first order contraction} between multivectors (see also \cite[Sec.~1.5]{Federer96} and \cite[Sec.~I]{Riv08}).
\begin{Dfi}\label{defresbul}
    Let $(V,g)$ be a finite-dimensional inner product space. For $\alpha\in \bwe^pV$ and $\beta\in \bwe^q V$ with $q\le p$, we define the \textit{interior product} $\alpha\,\resg \beta\in \bwe^{p-q}V$ satisfying 
    \begin{align}\label{eq:defres}
    \langle \alpha\,\resg \beta,  \gamma\rangle_g=\langle \alpha, \beta\wedge \gamma\rangle_g,\qquad \text{for all }\gamma\in \bwe^{p-q}V. 
    \end{align}
For $q>p$, set $\al\, \resg \beta\coloneq0$. We also define the \textit{first order contraction} $\bulg \mko \colon \bwe^{p} V\times \bwe^{q} V\to \bwe^{p+q-2} V$ as follows. For $\al\in \bwe^p V$ and $\beta\in V$, set $\al \bulg \beta\coloneq\al\mko\resg \beta$; and for $\beta\in \bwe^{q_1} V$, $\ga\in \bwe^{q_2} V$, it holds that
\begin{align}\label{eq:defbul}
    \al\bulg(\beta \we \ga)=(\al\bulg \beta) \we \ga+(-1)^{q_1q_2}  (\al \bulg \ga)\we \beta.
\end{align}
\end{Dfi}
    In particular, if $\alpha,\beta \in \bwe^pV$, then we have $\alpha\,\resg\,\beta=\langle \alpha,\beta \rangle_g$. In addition, for $\al\in \bwe^p V$, $\beta\in\bwe V$, $v\in V$, we have the following identities:
   \begin{numcases}{}   *
   _g\,(\alpha\wedge \beta )=(*_g\,\alpha)\,\resg \beta \label{*_gcommwed},\\[0.4ex]
   (\al\we \beta)\,\resg v=(\al\,\resg v) \we \beta+(-1)^p\, \alpha\we (\beta\,\resg v),\label{prodrule}
   \end{numcases}\smallskip
  A consequence of \eqref{*_gcommwed} is that, for $\al\in \bwe^p V$ and $\beta\in \bwe^q V$ we have
  \begin{align}\label{*_gcomres}
      *_g\, (\al\, \resg \beta)=\begin{dcases}
          (*_g\, \al) \we \beta,& \text{if }\dim(V) \text{ is odd},\\[0.4ex]
          (-1)^q (*_g\, \al) \we \beta, \quad& \text{if }\dim(V) \text{ is even}.
      \end{dcases}
  \end{align} 
  Combining~\eqref{eq:defbul} and \eqref{prodrule}, for $u_1,u_2,v_1,v_2\in V$, we also obtain
\begin{align}\label{bul2vecs}
\begin{aligned}
    (u_1\we u_2) \bulg (v_1\we v_2)
    &=\lan u_1,v_1\ran_g \,u_2\we v_2+\lan u_2,v_2\ran_g \,u_1\we v_1\\
    &\quad-\big(\lan u_1,v_2\ran_g \mkt u_2\we v_1+\lan u_2,v_1\ran_g \,u_1\we v_2\big).
    \end{aligned}
\end{align}

When $V=\R^m$ with the Euclidean metric, we write $\,\res,\sbul$ instead of $\,\resg,\bulg$, and denote by $\cdot$ the inner product on $\bwe \R^m$ induced by the Euclidean metric. Now we define the \textit{contracted wedge product} (cf. \cite[Sec.~2.1]{Houri10}), the $k$-fold version of $\sbul$.
\begin{Dfi}
    Let $(\mathbf e_1,\dots,\mathbf e_m)$ be an orthonormal basis of $\R^m$. For $\al,\beta\in \bwe \R^m$ and $k\in \N$, we define the $k$-fold contracted wedge inductively by 
    \begin{align*}
       \al \sbul^{(0)} \beta\coloneq \al\we \beta,\qquad  \al \sbul^{(k)} \beta\coloneq\sum_{i=1}^m (\al\,\res \mathbf e_i)\sbul^{(k-1)} (\beta\,\res \mathbf e_i). 
    \end{align*}
   This definition is independent of the choice of the orthonormal basis $(\mathbf e_i)_{i=1}^m$. In particular, we have $\sbul^{(1)}=\sbul$\,.
\end{Dfi}
Now we prove a higher-dimensional generalization of \cite[Lem.~1.8]{Ri16}.
\begin{Lm}\label{lm:Hdnstrequ}
    Let $n\ge 2$, and $\bP\colon B^n\rightarrow \R^{m}$ be a smooth immersion. Let $*_g$ be the Hodge star operator with respect to $g=g_{\bP}$, and $\bn$, $\bH$ be as in~\eqref{not-met}. Then we have
    \begin{equation}\label{eq:Hdnstrequ}
        d\bn+n\,d\bP\we (\bn\,\res \bH)= \frac{(-1)^n}{(n-2)!}*_g\star\mko \Big( \big(d\bn\sbul^{(m-n-1)} \bn\big) \ovs{\ovwe}{\we}d\bP^{\ovs\ovwe\we(n-2)}\Big).
    \end{equation}
    Here we write $d\bP^{\ovs\ovwe\we(n-2)}$ for the mixed $\ovs\ovwe \we$ of $d\bP$ with itself $n-2$ times, i.e.  
    \[
    d\bP^{\ovs\ovwe\we(0)}\coloneq 1,\qquad d\bP^{\ovs\ovwe\we (k)}\coloneq d\bP^{\ovs\ovwe\we (k-1)}\ovs\ovwe\we d\bP,\quad \text{for all }k\in \N^+.
    \]
\end{Lm}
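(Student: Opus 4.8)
The plan is to check~\eqref{eq:Hdnstrequ} pointwise: both sides are algebraic expressions in $d\bP$, $d\bn$ and $\bn$ (no further derivatives of these occur), so it suffices to fix $p\in\Sigma$ and expand everything in a single well-chosen basis. I would take an orthonormal basis $(e_1,\dots,e_{m-1})$ of $(T_p\Sigma,g)$, positively oriented with respect to the orientation of $\Sigma$, and moreover an eigenbasis of the shape operator (which is $g$-self-adjoint, so such a basis exists, and one may flip a sign to keep it positively oriented). Setting $f_i\coloneqq d\bP(e_i)$, the tuple $(f_1,\dots,f_{m-1},\bn)$ is then a positively oriented orthonormal basis of $\R^m$ --- this is precisely the orientation convention built into the Gauss map --- and $\nabla_{e_i}\bn=-\kappa_i f_i$ (no sum), where $\kappa_1,\dots,\kappa_{m-1}$ are the principal curvatures. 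Writing $(\theta^i)$ for the dual coframe at $p$, at $p$ one has $d\bP=\sum_i f_i\otimes\theta^i$, $d\bn=-\sum_i\kappa_i\,f_i\otimes\theta^i$, and $(m-1)H=\sum_i\kappa_i$, whence the left-hand side of~\eqref{eq:Hdnstrequ} equals $\sum_i\bigl(\sum_{j\ne i}\kappa_j\bigr)f_i\otimes\theta^i$.

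For the right-hand side I would first expand $d\bP^{\ovs\ovwe\we(m-3)}=(m-3)!\sum_{|I|=m-3}f_I\otimes\theta^I$, the sum over increasing multi-indices $I=(i_1<\dots<i_{m-3})$ with $f_I=f_{i_1}\we\dots\we f_{i_{m-3}}$ and likewise for $\theta^I$; the factor $(m-3)!$ arises because permuting the indices changes each Grassmann factor by its signature and so leaves the product invariant. Mixed-wedging by $d\bn$ and then by $\bn$, and keeping only the terms with $k\notin I$, gives
\[
d\bn\ovs{\ovwe}{\we}d\bP^{\ovs\ovwe\we(m-3)}\we\bn
=-(m-3)!\,\sum_{k}\ \sum_{\substack{|I|=m-3\\ k\notin I}}\kappa_k\,(f_k\we f_I\we\bn)\otimes(\theta^k\we\theta^I).
\]
The crux of the proof is then the sign identity: if $k\notin I$ and $l$ is the unique element of $\{1,\dots,m-1\}$ absent from $\{k\}\cup I$, then
\[
*_g\star\bigl((f_k\we f_I\we\bn)\otimes(\theta^k\we\theta^I)\bigr)=-\,f_l\otimes\theta^l.
\]
To see it, reorder $f_k\we f_I=(-1)^{p}f_1\we\dots\we\widehat{f_l}\we\dots\we f_{m-1}$ with $p\coloneqq\#\{i\in I:i<k\}$; this yields $\star(f_k\we f_I\we\bn)=(-1)^{p}(-1)^{m-l}f_l$ and, identically, $*_g(\theta^k\we\theta^I)=(-1)^{p}(-1)^{m-1-l}\theta^l$, where $*_g$ is the Euclidean Hodge star in the orthonormal coframe at $p$. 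Since $\star$ acts on the $\R^m$-factor and $*_g$ on the form-factor, the product of the two signs is $(-1)^{2p}(-1)^{(m-l)+(m-1-l)}=-1$, independent of $k$ and $I$.

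Substituting the sign identity and reorganizing the double sum by the value of $l$ --- for fixed $l$ the index $k$ runs over all $k\ne l$, and then $I=\{1,\dots,m-1\}\setminus\{k,l\}$ is forced --- I obtain
\[
*_g\star\Bigl(d\bn\ovs{\ovwe}{\we}d\bP^{\ovs\ovwe\we(m-3)}\we\bn\Bigr)
=(m-3)!\,\sum_{l}\Bigl(\sum_{k\ne l}\kappa_k\Bigr)f_l\otimes\theta^l,
\]
and dividing by $(m-3)!$ this matches the left-hand side computed above, proving~\eqref{eq:Hdnstrequ}. The one genuinely delicate point is the orientation/sign bookkeeping in the sign identity --- one has to keep straight the orientation implicit in the Gauss map $\bn$, the positivity of the chosen frame, and the normalizations of the two Hodge stars $\star$ on $\R^m$ and $*_g$ on $\Sigma$, as well as of the mixed products $\ovs{\ovwe}{\we}$ and $\we$ --- but once the frame is fixed everything is mechanical, and the case $m=3$ recovers \cite[Lem.~1.8]{Ri16}, which provides a convenient check.
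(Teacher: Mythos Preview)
Your proof is correct and follows the same overall strategy as the paper's: a pointwise verification in a positive $g$-orthonormal frame at $p$, reducing everything to sign-tracking for the two Hodge stars. The one tactical difference is your additional choice of a principal (shape-operator eigen-) frame, which forces $d\bn$ to be diagonal and so you only need the ``diagonal'' sign identity $*_g\star\bigl((f_k\we f_I\we\bn)\otimes(\theta^k\we\theta^I)\bigr)=-f_l\otimes\theta^l$. The paper works in a general orthonormal frame, so it must also compute the off-diagonal case (for $j\ne i$, applying $\tfrac{1}{(m-3)!}*_g\star(\cdot\ovs{\ovwe}{\we}d\bP^{\ovs\ovwe\we(m-3)}\we\bn)$ to $\vec e_j\otimes e^i$ gives $\vec e_i\otimes e^j$) and then invoke the symmetry $e_i(\bn)\cdot\vec e_j=e_j(\bn)\cdot\vec e_i$ of the second fundamental form to match the two sides. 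Your principal-frame choice is a clean shortcut that sidesteps both of these steps; the paper's version, in exchange, records the action of the right-hand operator on arbitrary tangential inputs $\vec e_j\otimes e^i$, which is slightly more information than the identity itself requires.
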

\begin{proof}
    Since the normal bundle to $\bP(B^n)\subset \R^m$ is trivial, there exists a smooth orthonormal frame $(\bn_\al)_{\al=1}^{m-n}$ with $\bn=\bn_1\we\cdots \we \bn_{m-n}$. Let $\pi_{T}=\text{id}-\pi_{\bn}$ denote the orthogonal projection onto the tangent bundle of $\bP(B^n)\subset \R^m$. We first prove that for each $\al=1,\dots,m-n$, we have
    \begin{equation}\label{apphypercase}
        \pi_{T}\mko d\bn_\al+n\mko (\bH\cdot \bn_\al) \mko d\bP= \frac{(-1)^{m-n-1}}{(n-2)!}*_g\star\mko \Big( d\bn_\al\ovs{\ovwe}{\we}d\bP^{\ovs\ovwe\we(n-2)}\we \bn\Big). 
    \end{equation}
   Let $p\in B^n$, $(e_1,\dots,e_n)$ be a $g$-positive orthonormal basis of $T_pB^n$, and $(e^i)$ be the dual coframe to $(e_i)_{i=1}^n$. Write $\vec e_i=d\bP(e_i)$. Then we have 
  \[
        \frac{1}{(n-2)!}\, d\bP^{\ovs\ovwe\we (n-2)}=\sum_{1\le i_1<\cdots <i_{n-2}\le n}  (\vec e_{i_1}\we\cdots \we \vec e_{i_{n-2}})\,  e^{i_1}\we \cdots \we e^{i_{n-2}}.
    \]
    By direct computation, for $1\le i\le n$, we obtain
    \begin{equation}\label{eiiwedphi*}
       \frac{(-1)^{m-n-1}}{(n-2)!}*_g\star\mko \Big( (\vec e_i\ot e^i)\ovs{\ovwe}{\we}d\bP^{\ovs\ovwe\we(n-2)}\we \bn\Big)=-\sum_{j\neq i} \vec e_j\ot e^j.
    \end{equation}
    For $j\neq i$, we have 
    \begin{equation}\label{eijwedphi*}
          \frac{(-1)^{m-n-1}}{(n-2)!}*_g\star\mko \Big( (\vec e_j\ot e^i )\ovs{\ovwe}{\we}d\bP^{\ovs\ovwe\we(n-2)}\we \bn\Big)=\vec e_i\ot e^j.
    \end{equation}
    Combining \eqref{eiiwedphi*}--\eqref{eijwedphi*} and using $\pi_T\mko d\bn_{\al}=\sum_{1\leq i,j\leq n} (e_i(\bn_\al)\cdot \vec e_j)\,  \vec e_j\ot e^i$ yields
    \begin{align*}
      \frac{(-1)^{m-n-1}}{(n-2)!}*_g\star\mko \Big( d\bn_\al\ovs{\ovwe}{\we}d\bP^{\ovs\ovwe\we(n-2)}\we \bn\Big) 
      &=  \frac{(-1)^{m-n-1}}{(n-2)!}*_g\star\mko \Big( \pi_T\mko d\bn_\al\ovs{\ovwe}{\we}d\bP^{\ovs\ovwe\we(n-2)}\we \bn\Big)\\
      &=\sum_{i\neq j}\Big(  (e_i(\bn_\al)\cdot \vec e_j)\,  \vec e_i\ot e^j-(e_i(\bn_\al)\cdot \vec e_i)\,  \vec e_j\ot e^j\Big).
   \end{align*}
    On the other hand, since $n\mko (\bH\cdot \bn_\al)=-\sum_{i=1}^{n} e_i(\bn_\al)\cdot \vec e_i $ and $e_i(\bn_\al)\cdot \vec e_j=e_j(\bn_\al)\cdot \vec e_i$,  we obtain that
    \begin{align*}
          \pi_{T}\mko d\bn_\al+n\mko (\bH\cdot \bn_\al) \mko d\bP&= \sum_{i,j=1}^{n}  \Big((e_i(\bn_\al)\cdot \vec e_j)\,  \vec e_j\ot e^i- (e_i(\bn_\al)\cdot \vec e_i)\,  \vec e_j\ot e^j\Big)\\
         &=\sum_{i\neq j} \Big((e_i(\bn_\al)\cdot \vec e_j)\,  \vec e_j\ot e^i- (e_i(\bn_\al)\cdot \vec e_i)\,  \vec e_j\ot e^j\Big)\\
         &=\sum_{i\neq j} \Big((e_i(\bn_\al)\cdot \vec e_j)\,  \vec e_i\ot e^j- (e_i(\bn_\al)\cdot \vec e_i)\,  \vec e_j\ot e^j\Big).
    \end{align*}
    The identity~\eqref{apphypercase} is thus proved. \\
    Since $d\bn_\al\cdot \bn_\al=0$, we have $(\pi_{\bn}\mko d\bn_\al) \we (\bn\,\res \bn_\al)=0$. Wedging both sides of~\eqref{apphypercase} with $\bn\,\res \bn_\al$ then gives
   \begin{equation}\label{dnawenresna}
   \begin{aligned}
       \big(d\bn_\al+n\mko (\bH\cdot \bn_\al) \mko d\bP\big)\we (\bn\,\res \bn_\al)
       &=\big(\pi_T\mko d\bn_\al+n\mko (\bH\cdot \bn_\al) \mko d\bP\big)\we (\bn\,\res \bn_\al)\\
       &=\frac{(-1)^{m-n-1}}{(n-2)!}*_g\star\mko \Big( d\bn_\al\ovs{\ovwe}{\we}d\bP^{\ovs\ovwe\we(n-2)}\we \bn\Big)\we (\bn\,\res \bn_\al).
   \end{aligned}
   \end{equation}
   Concerning the right-hand side of~\eqref{dnawenresna}, the identity~\eqref{*_gcomres} for $\star$ and $\res$ on $\bwe \R^m$ implies that
   \begin{align}\label{strdnawenres}
   \begin{aligned}
       &\star\mko \Big( d\bn_\al\ovs{\ovwe}{\we}d\bP^{\ovs\ovwe\we(n-2)}\we \bn\Big)\we (\bn\,\res \bn_\al)\\
       &=(-1)^{(m+1)(m-n-1)}\star \lf( \Big( d\bn_\al\ovs{\ovwe}{\we}d\bP^{\ovs\ovwe\we(n-2)}\we \bn\Big)\res (\bn\,\res \bn_\al)\rg)\\
       &=(-1)^{(m+1)(m-n-1)}\star \lf( \Big( (\pi_T \mko d\bn_\al)\ovs{\ovwe}{\we}d\bP^{\ovs\ovwe\we(n-2)}\we \bn_\al\we (\bn\,\res \bn_\alpha)\Big)\res (\bn\,\res \bn_\al)\rg).       
       \end{aligned}
   \end{align}
   Since $(\pi_T\mko d\bn_\al) \ovs{\ovwe}{\we}d\bP^{\ovs\ovwe\we(n-2)}\we \bn_\al$ is a section of $\bwe^nT\bP \ot \bwe^{n-1}T^*B^n$, where $T\bP=\bP_*(TB^n)$ is the tangent bundle of $\bP$, switching the order of wedge in~\eqref{strdnawenres} gives that 
   \begin{align}\label{strdnawenres2}
       \begin{aligned}
          &\star\mko \Big( d\bn_\al\ovs{\ovwe}{\we}d\bP^{\ovs\ovwe\we(n-2)}\we \bn\Big)\we (\bn\,\res \bn_\al)\\
          &=(-1)^{(m+1)(m-n-1)+n(m-n-1)}\star \lf( \Big((\bn\,\res \bn_\alpha)\we (\pi_T \mko d\bn_\al)\ovs{\ovwe}{\we}d\bP^{\ovs\ovwe\we(n-2)}\we \bn_\al\Big)\res (\bn\,\res \bn_\al)\rg)\\
          &=(-1)^{m+n+1}\star \Big((\pi_T\mko d\bn_\al) \ovs{\ovwe}{\we}d\bP^{\ovs\ovwe\we(n-2)}\we \bn_\al \Big)\\
       &=(-1)^{m+1}\star \Big((\pi_T\mko d\bn_\al) \we \bn_\al \ovs{\ovwe}{\we}d\bP^{\ovs\ovwe\we(n-2)} \Big).
       \end{aligned}
   \end{align}
    Combining~\eqref{dnawenresna} and~\eqref{strdnawenres2}, we obtain 
    \begin{equation}\label{dnawenres2}
         \big(d\bn_\al+n\mko (\bH\cdot \bn_\al) \mko d\bP\big)\we (\bn\,\res \bn_\al)=\frac{(-1)^n}{(n-2)!}*_g \star \Big((\pi_T\mko d\bn_\al) \we \bn_\al\ovs{\ovwe}{\we}d\bP^{\ovs\ovwe\we(n-2)}\Big).
    \end{equation}
    By the definition of the contracted wedge operator, we have 
    \begin{equation}\label{piTdnawena}
        (\pi_T\mko d\bn_\al) \we \bn_\al=\big(d\bn_\al \we (\bn\,\res \bn_\al)\mko\big)\sbul^{(m-n-1)} \bn.
    \end{equation}
    Since $d\bn=\sum_{\al=1}^{m-n} d\bn_\al \we (\bn\,\res \bn_\al)$, substituting~\eqref{piTdnawena} into~\eqref{dnawenres2} and summing over $\al=1,\dots,m-n$ completes the proof of~\eqref{eq:Hdnstrequ}.
\end{proof}

\subsection{Sobolev--Lorentz spaces}\label{sec:soblor}
Assume $X_1,X_2$ are Banach spaces that are continuously embedded in a Hausdorff topological vector space $Z$. Equipped with the norms defined for instance in~\cite[Ch.~5]{Bennett88}, $X_1+X_2$ and $X_1\cap X_2$ are Banach spaces. We write $L^p+L^q(U)\coloneqq L^p(U)+L^q(U)$. The same convention applies to Sobolev--Lorentz spaces.

\begin{Dfi}[Lorentz spaces]\label{def-Lor}
Let \(U\subset\R^{n}\) be a measurable set. Given a measurable function \(f\colon U\to\R\), we define the distribution function and the decreasing rearrangement of \(f\) as
\[
d_{f}(\lambda)\coloneqq\mathcal{L}^{n}\bigl\{x\in U:|f(x)|>\lambda\bigr\},
\qquad
f^{*}(t)\coloneqq\inf\bigl\{\lambda\ge 0 : d_{f}(\lambda)\le t\bigr\}.
\]
For \(1\le p<\infty\) and \(1\le q\le\infty\), we define the Lorentz quasinorm
\[
|f|_{L^{p,q}(U)}
\coloneqq\bigl\|t^{\frac1p}f^{*}(t)\bigr\|_{L^{q}(\R_{+},\,dt/t)}
    =p^{\frac1q}\bigl\|\lambda\,d_{f}(\lambda)^{\frac1p}\bigr\|_{L^{q}(\R_{+},\,d\lambda/\lambda)}.
\]
The Lorentz space \(L^{p,q}(U)\) consists of all measurable \(f\) with
\(|f|_{L^{p,q}(U)}<\infty\). 
When \(p>1\), the Lorentz quasinorm is equivalent to a norm (see for instance \cite[Ch.~4, Thm.~4.6]{Bennett88}), which we denote by
\(\|\cdot\|_{L^{p,q}(U)}\).
\end{Dfi}
We record the following form of H\"older's inequality for Lorentz spaces.
\begin{Lm}[{\cite[Thm.~4.5]{Hunt}}]\label{lm:lorHold}
   Assume $f_1\in L^{p_1,q_1}(U)$ and $f_2\in L^{p_2,q_2}(U)$, with $p,p_1,p_2\in [1,\nf)$, $q,q_1,q_2\in [1,\nf]$, and $1/p=1/p_1+1/p_2$, $1/q\le 1/q_1+1/q_2$. Then $f_1f_2\in L^{p,q}(U)$, with
   \begin{align*}
       |f_1f_2|_{L^{p,q}(U)}\le C(p_1,p_2,q_1,q_2) |f_1|_{L^{p_1,q_1}(U)}|f_2|_{L^{p_2,q_2}(U)}.
   \end{align*}
\end{Lm}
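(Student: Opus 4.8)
The plan is to reduce the statement to the classical submultiplicativity of the decreasing rearrangement under products, and then to invoke H\"older's inequality on the measure space $(\R_+,dt/t)$; this is essentially the argument of \cite[Thm.~4.5]{Hunt}. First I would reduce to the case $1/q=1/q_1+1/q_2$: setting $1/q_0\coloneqq 1/q_1+1/q_2$, one has $q_0\le q$, so by item (ii) of Definition~\ref{def-Lor} (the case $q_0=q$ being trivial) there is a continuous embedding $L^{p,q_0}(U)\hookrightarrow L^{p,q}(U)$ whose constant can be taken to depend only on $p$ and $q_0$, hence only on $p_1,p_2,q_1,q_2$. It therefore suffices to estimate $|f_1f_2|_{L^{p,q_0}(U)}$.

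The pointwise input I would use is the elementary bound
\[
(f_1f_2)^*(s+t)\le f_1^*(s)\,f_2^*(t)\qquad\text{for all }s,t>0,
\]
valid since $|f_1|\le f_1^*(s)$ off a set of measure at most $s$ and $|f_2|\le f_2^*(t)$ off a set of measure at most $t$, so that $|f_1f_2|\le f_1^*(s)f_2^*(t)$ off a set of measure at most $s+t$; taking $s=t$ gives $(f_1f_2)^*(t)\le f_1^*(t/2)f_2^*(t/2)$. Then, writing $t^{1/p}=t^{1/p_1}t^{1/p_2}$, I would estimate
\[
|f_1f_2|_{L^{p,q_0}(U)}=\big\|t^{1/p}(f_1f_2)^*(t)\big\|_{L^{q_0}(\R_+,dt/t)}\le\big\|\big(t^{1/p_1}f_1^*(t/2)\big)\big(t^{1/p_2}f_2^*(t/2)\big)\big\|_{L^{q_0}(\R_+,dt/t)},
\]
and apply H\"older's inequality in $L^{q_0}(\R_+,dt/t)$ with exponents $q_1,q_2$ (legitimate because $1/q_0=1/q_1+1/q_2$) to bound the right-hand side by $\big\|t^{1/p_1}f_1^*(t/2)\big\|_{L^{q_1}(\R_+,dt/t)}\big\|t^{1/p_2}f_2^*(t/2)\big\|_{L^{q_2}(\R_+,dt/t)}$. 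Since the measure $dt/t$ is invariant under the dilation $t\mapsto 2t$, the substitution $t\mapsto 2t$ turns each factor into $2^{1/p_i}|f_i|_{L^{p_i,q_i}(U)}$, so that $|f_1f_2|_{L^{p,q_0}(U)}\le 2^{1/p}|f_1|_{L^{p_1,q_1}(U)}|f_2|_{L^{p_2,q_2}(U)}$; combining with the embedding of the first step gives the asserted estimate.

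I do not expect a genuine obstacle here, as this is a classical fact; the only points that will need a little care are the endpoint values $q_i=\infty$, where H\"older's inequality is applied in its $L^1$--$L^\infty$ form and the relevant factor becomes an essential supremum, and the degenerate case $q=\infty$, which follows directly from the pointwise bound $t^{1/p}(f_1f_2)^*(t)\le 2^{1/p}\big((t/2)^{1/p_1}f_1^*(t/2)\big)\big((t/2)^{1/p_2}f_2^*(t/2)\big)$ together with the embeddings $L^{p_i,q_i}(U)\hookrightarrow L^{p_i,\infty}(U)$. In particular, no use of the norm (as opposed to quasinorm) structure of the Lorentz spaces is required.
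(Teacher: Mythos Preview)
Your argument is correct and is indeed the classical proof from \cite[Thm.~4.5]{Hunt}. The paper does not give its own proof of this lemma; it simply records the statement with a citation, so there is nothing further to compare.
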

\begin{Dfi}
    \label{dfi-So-Lor}
    Let $k\in \N^+$ and $U\subset \R^n$ be an open set. For $1< p< \infty$, $1\le q\le \infty$, we set 
    \[
    W^{k,(p,q)}(U)\coloneqq\Big\{f\in L^{p,q}(U)\colon \p^\al f\in L^{p,q}(U) \text{ for each } 0\le |\al|\le k\Big\}.
    \]
    We also define the negative-order Sobolev--Lorentz space and its norm by
\begin{align*}
   &W^{-k,(p,q)}(U)\coloneqq\bigg\{f\in\mathcal D'(U)\colon f=\sum_{|\alpha|\le k} \p^\alpha f_{\alpha} \mbox{ for some }\{f_{\al}\}\subset L^{p,q}(U)\bigg\},\\
       &\|f\|_{W^{-k,(p,q)}(U)}\coloneqq\inf \bigg\{\sum_{|\alpha|\le k}\|f_\alpha\|_{L^{p,q}(U)}\colon f=\sum_{|\alpha|\le k} \p^\alpha f_{\alpha}\bigg\}.
\end{align*}
    When $U$ is bounded, we denote by $W_0^{k,p}(U)$ the closure of $C_c^\infty(U)$ in $W^{k,p}(U)$, equipped with the norm \begin{align*}
        \|f\|_{W^{k,p}_0(U)}\coloneqq\|\g^k f\|_{L^p(U)}.
    \end{align*}
     If $1<p<\infty$, $1< q\le \nf$, and \(1/p+1/p'=1\), \(1/q+1/q'=1\), then the same argument as in \cite[Sec.~1.1.15]{Mazya2011} implies $W^{-k,(p,q)}(U)=\bigl(W^{k,(p',q')}_{0}(U)\bigr)^{\!*}$.
\end{Dfi}
Applying Riesz potential estimates, we obtain the following embedding results for Sobolev-Lorentz spaces, see for instance \cite[Eqs.~(1.3)--(1.5)]{Mingi11}, \cite[Ch.~4, Thm.~4.18]{Bennett88}, and \cite[Eq.~(1.2.4) \& Thm.~3.1.4]{Adams96}.
\begin{Lm}\label{lm:LpembW-1}
    Let $U\subset \R^n$ be an open set. Suppose $f\colon U\to \R$ is measurable.
\begin{enumerate}[(i)]
  \item If $f\in L^1(U)$, then $f\in W^{-1,(\frac n{n-1},\nf)}(U)$, with 
  \[
      \|f\|_{W^{-1,(\frac{n}{n-1},\nf)}(U)}\le C(n) \|f\|_{L^1(U)}.
  \]
  \item If $f\in L^{p,q}(U)$ for some $1< p<n$ and $1\le q\le \nf$, then $f\in W^{-1,(\frac{np}{n-p},q)}(U)$, with 
  \[
      \|f\|_{W^{-1,(\frac{np}{n-p},q)}(U)}\le C(n,p) \|f\|_{L^{p,q}(U)}.
  \]
\end{enumerate}
\end{Lm}
We will use the following interpolation results for Sobolev--Lorentz spaces; see~\cite[Sec.~1.3]{Triebel95} for the real interpolation method. Their proofs follow from the retraction--coretraction principle~\cite[Thm.~1.2.4]{Triebel95} and the Stein extension theorem \cite[Ch.~VI, Thm.~5]{Stein}, together with the interpolation result for $W^{k,p}(\R^n)$ in~\cite[Thm.~2.4.2/1(c)]{Triebel95}.
\begin{Lm}[{\cite{Triebel95}}]\label{lm-interpo}
    Let $U\subset \R^n$ be a bounded Lipschitz domain. Let $k\in \N^+$, $1<p_0<p<p_1<\nf$, $1\le q\le \nf$, $0<\theta<1$ and $1/p=(1-\theta)/p_0+ \theta/ p_1$. Then we have:
    \begin{align*}
       &\big( W^{k,p_0}(U),W^{k,p_1}(U)\big)_{\theta,q}=W^{k,(p,q)}(U),\\
       &\big( W^{k,p_0}_0(U),W^{k,p_1}_0(U)\big)_{\theta,q}=W^{k,(p,q)}_0(U),\\
       &\big( W^{-k,p_0}(U),W^{-k,p_1}(U)\big)_{\theta,q}=W^{-k,(p,q)}(U).
    \end{align*}
\end{Lm}

\subsection{Elliptic estimates}
\begin{Lm}\label{lm:ellcacciolp}
 Let $n\ge 2$. Suppose $\{a^{ij}\}_{i,j=1}^n\subset L^{\infty}\cap W^{1,n}(B^n)$ satisfies \begin{equation}\label{elli}
    \La^{-1}|\xi|^2\le a^{ij}(x)\,\xi_i\mko \xi_j \le \La |\xi|^2,\qquad \text{for a.e. } x\in B^n \text{ and all } \,\xi\in \R^n,
\end{equation}
where $\La>0$ is a constant. Let $\omega\colon (0,\infty)\rightarrow [0,\infty)$ be a function satisfying $\lim_{\rho\rightarrow 0} \omega (\rho)=0$. We assume for all $1\le i,j\le n$ and any ball $B_\rho\subset \R^n$ of radius $\rho$, it holds that
\[
   \|\g a^{ij}\|_{L^n(B_\rho\cap B^n)}\le \om(\rho).
\]
Let $1<p<\nf$, $1\le q,q_0\le \nf$, and $p_0>n/(n-1)$. Suppose $\vec f=(f^1,\dots, f^n)\in L^{p,q}(B^n,\R^n)$, and $u\in L^{p_0,q_0}(B^n)$ satisfies $ \p_j(a^{ij}\mko\p_i u)=\textup{div}\,\vec f$ in $\mca D'(B^n)$, that is, 
\[
    -\int_{B^n} u\,\p_i(a^{ij}\mko\p_j \vp)=\int_{B^n} f^i\mko \p_i \mko\vp,\qquad \text{for all }\, \vp\in C^\infty_c(B^n).
\]
 Then $u\in W^{1,(p,q)}_{\loc}(B^n)$. Moreover, for any $\al\in  (0,\frac np)$ and all $r\in (0,\frac 12]$, we have the following estimates:
 \begin{align}
  &\|\g u\|_{L^{p,q}(B_r(0))} \le C(\La,\om,\al,p,p_0,n) \big(\|\vec f\|_{L^{p,q}(B^n)}+r^{\al} \|u\|_{L^{p_0,q_0}(B^n)}\big),\label{eq:ellcacciop1}\\
   &\|\g u\|_{L^{p,q}(B_r(0))} \le C(\La,\om,\al,p,n) \big(\|\vec f\|_{L^{p,q}(B^n)}+r^{\al} \|\g u\|_{L^{p,q}(B_{3/4}(0))}\big).\label{eq:ellcacciopg}
  \end{align}
If in addition $p<n$, then for all $r\in (0,\frac 12]$, we obtain
\begin{equation}\label{eq:ellcacciop2}
     \|u\|_{L^{\frac{np}{n-p},q}(B_r(0))} \le C(\La,\om,p,p_0,n) \big(\|\vec f\|_{L^{p,q}(B^n)}+r^{\frac {n-p}p} \|u\|_{L^{p_0,q_0}(B^n)}\big).
\end{equation}
\end{Lm}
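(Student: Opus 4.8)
The plan is to treat \eqref{eq:ellcacciop1}--\eqref{eq:ellcacciopg} as a perturbation of the constant‑coefficient theory: first prove the corresponding estimates on the Lebesgue scale (every $L^{p,q}$ replaced by $L^{p}$, $1<p<\infty$), then upgrade to the Lorentz scale by real interpolation, and finally read off \eqref{eq:ellcacciop2} from \eqref{eq:ellcacciop1} and the Sobolev embedding in Lemma~\ref{lm:LpembW-1}(iii). Two facts about a \emph{constant} elliptic matrix $\bar a=(\bar a^{ij})$ obeying \eqref{elli} are used repeatedly: (a) the Calderón--Zygmund estimate $\|\g w\|_{L^{s}(B)}\le C(\La,s,n)\|\vec g\|_{L^{s}(B)}$ whenever $w\in W^{1,s}_{0}(B)$ solves $\operatorname{div}(\bar a\,\g w)=\operatorname{div}\vec g$ on a ball $B$, for all $1<s<\infty$; and (b) the Campanato decay of the homogeneous equation: if $\operatorname{div}(\bar a\,\g v)=0$ on $B_{R}(x_{0})$ then $\g v$ solves the same equation, hence is smooth, and its oscillations decay,
\[
\fint_{B_{\rho}(x_{0})}\big|\g v-(\g v)_{B_{\rho}}\big|^{2}\le C(\La,n)\Big(\tfrac{\rho}{R}\Big)^{2}\fint_{B_{R}(x_{0})}\big|\g v-(\g v)_{B_{R}}\big|^{2},\qquad 0<\rho\le R.
\]

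For the $L^{p}$ step I would, on each small ball $B_{2\rho}(x_{0})\Subset B^{n}$, freeze the coefficients at their average $\bar a_{x_{0},\rho}\coloneqq\fint_{B_{2\rho}(x_{0})}a^{ij}$ and write $u=v+w$, with $v$ solving the homogeneous frozen equation and $w\in W^{1,\cdot}_{0}(B_{2\rho}(x_{0}))$ solving $\operatorname{div}(\bar a_{x_{0},\rho}\,\g w)=\operatorname{div}\!\big(\vec f+(\bar a_{x_{0},\rho}-a)\g u\big)$, then combine (a) for $w$ with (b) for $v$. The only serious point is controlling the error term $(\bar a_{x_{0},\rho}-a)\g u$: since $\|\g a^{ij}\|_{L^{n}(B_{2\rho}(x_{0}))}\le\omega(2\rho)$ and $W^{1,n}\hookrightarrow\mathrm{BMO}$ with control on the norm, one has $\|a^{ij}-\bar a_{x_{0},\rho}\|_{\mathrm{BMO}(B_{2\rho}(x_{0}))}\le C(n)\,\omega(2\rho)$; representing $\g u$ through the fundamental solution of the frozen operator and using the boundedness of the commutators $[a^{ij},T]$ on $L^{s}$ (Coifman--Rochberg--Weiss), with norm $\lesssim\|a^{ij}\|_{\mathrm{BMO}}$ — this is the divergence‑form Chiarenza--Frasca--Longo/Di~Fazio scheme — the error is absorbed once $\rho$ is small. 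A standard Campanato/Morrey iteration lemma turns the resulting "decay with small correction" inequality into the $L^{p}$ versions of \eqref{eq:ellcacciop1}--\eqref{eq:ellcacciopg}, the power $r^{\alpha}$ ($\alpha\in(0,n/p)$) being produced by the iteration. The iteration is seeded by a base bound $\|\g u\|_{L^{s_{0}}(B_{7/8})}\le C\big(\|\vec f\|_{L^{p}(B^{n})}+\|u\|_{L^{p_{0}}(B^{n})}\big)$ with $s_{0}=s_{0}(n,p_{0})$ coming from Caccioppoli and duality, and this is precisely where the hypothesis $p_{0}>n/(n-1)$ enters: it is the threshold making the weak formulation meaningful and giving such an a priori estimate.

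To pass to Lorentz spaces I would fix $r\in(0,\tfrac12]$, decompose $\vec f$ across two exponents $p_{0}<p<p_{1}$ near $p$, apply the $L^{p_{i}}$ estimates to the Dirichlet solutions of $\operatorname{div}(a\,\g u_{i})=\operatorname{div}\vec f_{i}$ on $B^{n}$, interpolate the (linear) solution operators $\vec f_{i}\mapsto\g u_{i}|_{B_{r}}$ via the classical identity $(L^{p_{0}},L^{p_{1}})_{\theta,q}=L^{p,q}$ (cf.\ \cite{Bennett88}; one may also invoke Lemma~\ref{lm-interpo} on the Sobolev--Lorentz scale), and recombine, the homogeneous remainder being smooth by De~Giorgi--Nash; tracking the auxiliary terms through this decomposition yields \eqref{eq:ellcacciop1} and \eqref{eq:ellcacciopg}. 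Finally, \eqref{eq:ellcacciop2} follows by applying Lemma~\ref{lm:LpembW-1}(iii) on $B_{r}(0)$ to $u-\fint_{B_{r}(0)}u$ (which does not change $\g u$) and inserting \eqref{eq:ellcacciop1}. The main obstacle throughout is the low regularity of the coefficients: $a^{ij}\in W^{1,n}\subset\mathrm{VMO}$ need not be continuous, so they cannot be frozen pointwise, and it is the quantitative smallness $\omega(\rho)\to0$ of their $L^{n}$‑oscillation — turned into a small $\mathrm{BMO}$ norm and absorbed via the commutator bound and the Lorentz--Hölder inequality (Lemma~\ref{lm:lorHold}) — that plays the role of continuity and lets the perturbation close; a secondary nuisance is keeping all constants uniform in $p$ and $q$ so that the interpolation step is legitimate.
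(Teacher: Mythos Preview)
Your overall strategy for \eqref{eq:ellcacciop1}--\eqref{eq:ellcacciopg} is sound but takes a much longer route than the paper. The paper does not redo the Di~Fazio/Chiarenza--Frasca--Longo machinery: it simply decomposes $u=u_0+u_1$, where $u_0\in W^{1,(p,q)}_0(B^n)$ is the Dirichlet solution of $\operatorname{div}(a\nabla u_0)=\operatorname{div}\vec f$ (existence and estimate quoted from \cite[Thm.~1.5]{Byun05} plus Lemma~\ref{lm-interpo}), and $u_1=u-u_0$ solves the homogeneous equation. Because the coefficients lie in $W^{1,n}$, \cite[Thm.~4.1]{laMan20} upgrades the very weak solution $u_1$ to $W^{1,s}_{\mathrm{loc}}$ for every $s<\infty$; Morrey's inequality then gives $u_1\in C^{0,\gamma}$ for any $\gamma<1$, and applying \cite[Thm.~3.1]{Byun05} to $u_1-u_1(0)$ on $B_{3r/2}$ yields $\|\nabla u_1\|_{L^{p,q}(B_r)}\le C\,r^{n/p-1+\gamma}$, which is the source of the $r^\alpha$ with arbitrary $\alpha<n/p$. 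Your Campanato/commutator iteration reproves the content of \cite{Byun05} from scratch; what it buys is self-containment, while the paper's route buys brevity and a transparent mechanism for the $r^\alpha$ decay (H\"older continuity of $u_1$, not a gradient iteration). Note also that De~Giorgi--Nash alone would only give \emph{some} $\gamma_0>0$, hence only some $\alpha_0>0$; to reach every $\alpha<n/p$ you really need the full $W^{1,s}$ regularity of $u_1$ for all $s$, which uses $a\in W^{1,n}$ and not merely $a\in\mathrm{VMO}$.

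There is a genuine gap in your argument for \eqref{eq:ellcacciop2}. Applying Sobolev--Poincar\'e on $B_r$ to $u-\fint_{B_r}u$ and inserting \eqref{eq:ellcacciop1} only controls $\|u-\fint_{B_r}u\|_{L^{np/(n-p),q}(B_r)}$; to recover $\|u\|_{L^{np/(n-p),q}(B_r)}$ you must add back $|\fint_{B_r}u|\cdot\|1\|_{L^{np/(n-p),q}(B_r)}\sim|\fint_{B_r}u|\,r^{(n-p)/p}$. From the hypothesis $u\in L^{p_0,q_0}(B^n)$ you only get $|\fint_{B_r}u|\lesssim r^{-n/p_0}\|u\|_{L^{p_0,q_0}}$, so the mean contributes $r^{(n-p)/p-n/p_0}\|u\|_{L^{p_0,q_0}}$, which blows up as $r\to 0$ and cannot be absorbed into the right-hand side of \eqref{eq:ellcacciop2}. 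The paper avoids this by arguing on the two pieces separately: for $u_0$ one has $\|u_0\|_{L^{np/(n-p),q}(B^n)}\le C\|\vec f\|_{L^{p,q}(B^n)}$ by Sobolev embedding on the fixed ball $B^n$, while for $u_1$ the $L^\infty$ bound on $B_{3/4}$ gives directly $\|u_1\|_{L^{np/(n-p),q}(B_r)}\le r^{(n-p)/p}\|u_1\|_{L^\infty}\le C\,r^{(n-p)/p}(\|\vec f\|+\|u\|_{L^{p_0,q_0}})$. You already have this decomposition available from your interpolation step; use it here instead of the Sobolev--Poincar\'e shortcut.
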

\begin{proof}
    By \cite[Thm.~1.5]{Byun05} and Lemma~\ref{lm-interpo}, we find a unique $u_0\in W^{1,(p,q)}_0(B^n)$ solving
\begin{equation}\label{lapu=divf}
 \begin{dcases}  
 \p_j(a^{ij} \mko\p_i\mko u_0)=\textup{div}\,\vec f & \quad\mbox{in } B^n,\\
 u_0=0  &\quad \mbox{on } \p B^n.
\end{dcases}
\end{equation}
Moreover, we have
\begin{equation}\label{estlapu0=divf}
    \|u_0\|_{W^{1,(p,q)}_0(B^n)}\le C(\La,\om,p,n)\|\vec f\|_{L^{p,q}(B^n)}. 
\end{equation}
Fix $p_1>n/(n-1)$ such that $1/p_1> 1/p-1/n$. By the Sobolev embedding, we obtain
\begin{equation}\label{u0Lp1bdf}
    \|u_0\|_{L^{p_1}(B^n)}\le C(p,n)  \|u_0\|_{W^{1,(p,q)}_0(B^n)}\le C(\La,\om,p,n)\|\vec f\|_{L^{p,q}(B^n)}. 
\end{equation}
Set $p_2\coloneqq\min(p_1,p_0)$ and $u_1\coloneqq u-u_0$. Then we have $\p_j(a^{ij}\mko\p_i \mko u_1)=0$ in $\mca D'(B^n)$, and
\begin{align*}
    \|u_1\|_{L^{p_2,q_0}(B^n)}&\le \|u_0\|_{L^{p_2,q_0}(B^n)}+\|u\|_{L^{p_2,q_0}(B^n)}\\
    &\le C(\La,\om,p,n) \big(\|\vec f\|_{L^{p,q}(B^n)}+\|u\|_{L^{p_0,q_0}(B^n)}\big).
\end{align*} 
Since $p_2>n/(n-1)$ and $a^{ij}\in W^{1,n}(B^n)$, by~\cite[Thm.~4.1]{laMan20} we obtain $u_1\in W^{1,s}_{\loc}(B^n)$ for any $s\in (1,\nf)$, with the estimate 
\begin{align*}
    \|u_1\|_{W^{1,s}(B_{3/4}(0))} &\le C(\La,\om,p,p_0,s,n) \|u_1\|_{L^{p_2,q_0}(B^n)}\\
    &\le C(\La,\om,p,p_0,s,n) \big(\|\vec f\|_{L^{p,q}(B^n)}+\|u\|_{L^{p_0,q_0}(B^n)}\big).
\end{align*}
Fix $\al\in (0,\frac np)$ and set 
$ \ga\coloneqq\max\big(\frac 12,\al+1-\frac np\big)\in (0,1)$.
Morrey's inequality \cite[Sec.~5.6.2]{evans} then implies $u_1\in C^{0,\ga}_{\loc}(B^n)$ with 
\begin{equation}\label{moru1bdfu}\begin{aligned}
  &\sup_{x\in B_{3/4}(0), x\neq 0}|x|^{-\ga}\mko |u_1(x)-u_1(0)|\\
 &\le C(\ga, n)\|u_1\|_{W^{1,\frac n{1-\ga}}(B_{3/4}(0))}\\
   &\le C(\La,\om,p,p_0,\al,n) \big(\|\vec f\|_{L^{p,q}(B^n)}+\|u\|_{L^{p_0,q_0}(B^n)}\big).
   \end{aligned}
\end{equation}
 Since the function $u_1-u_1(0)$ is a weak solution to the equation $\p_j\big(a^{ij}\mko\p_i(u_1-u_1(0))\big)=0$,
the elliptic estimate~\cite[Thm.~3.1]{Byun05} together with a dilation argument implies that, for all $r\in (0,\frac 12]$, 
\begin{align}\label{estgi_1Lpq}
\begin{aligned}
  &\|\g u_1\|_{L^{p,q}(B_{r}(0))}\\&=  \|\g(u_1-u_1(0))\|_{L^{p,q}(B_{r}(0))}\\
  &\le C(\La,\om,p,n) \,r^{\frac np-1}\|u_1-u_1(0)\|_{L^\nf (B_{3r/2}(0))}\\
  &\le C(\La,\om,\al,p,p_0,n)\, r^{\frac np-1+\ga} \big(\|\vec f\|_{L^{p,q}(B^n)}+\|u\|_{L^{p_0,q_0}(B^n)}\big)\\
 & \le  C(\La,\om,\al,p,p_0,n)\, r^\al \big(\|\vec f\|_{L^{p,q}(B^n)}+\|u\|_{L^{p_0,q_0}(B^n)}\big).
 \end{aligned}
\end{align}
Then, combining~\eqref{estlapu0=divf} with~\eqref{estgi_1Lpq} yields $u=u_0+u_1\in W^{1,(p,q)}_{\loc}(B^n)$, and for all $r\in (0,\frac 12]$, we have 
\begin{align*}
    \|\g u\|_{L^{p,q}(B_r(0))}&\le  \|\g u_0\|_{L^{p,q}(B^n)}+ \|\g u_1\|_{L^{p,q}(B_{r}(0))}\\
    &\le  C(\La,\om,\al,p,p_0,n) \big(\|\vec f\|_{L^{p,q}(B^n)}+r^{\al} \|u\|_{L^{p_0,q_0}(B^n)}\big).
\end{align*}
This completes the proof of \eqref{eq:ellcacciop1}. To prove~\eqref{eq:ellcacciopg}, we can without loss of generality assume $\int_{B_{3/4}(0)}u=0$. Then~\eqref{eq:ellcacciopg} follows from combining the estimate~\eqref{eq:ellcacciop1} in $B_{3/4}(0)$ (taking $q_0=q$ and $1/p-1/n\le 1/p_0<1-1/n$) with Poincar\'e inequality.

Finally, if in addition $p<n$, then the same argument as in \eqref{u0Lp1bdf} gives 
\begin{equation}\label{u0Lnpn-pqbd}
   \|u_0\|_{L^{\frac{np}{n-p},q}(B^n)}\le C(\La,\om,p,n)\|\vec f\|_{L^{p,q}(B^n)}.
\end{equation}
Applying Morrey's inequality as in~\eqref{moru1bdfu}, we obtain 
\[
    \|u_1\|_{C^0(\overline{B_{3/4}(0)})}\le C(\La,\om,p,p_0,n) \big(\|\vec f\|_{L^{p,q}(B^n)}+\|u\|_{L^{p_0,q_0}(B^n)}\big).
\]
Hence for $r\in (0,\frac 12]$, we have
\begin{align}
     \|u_1\|_{L^{\frac{np}{n-p},q}(B_r(0))}
     & \le r^{\frac{n-p}{p}}  \|u_1\|_{C^0(\overline{B_{3/4}(0)})} \nonumber \\
     &\le C(\La,\om,p,p_0,n)\mko r^{\frac{n-p}{p}} \big(\|\vec f\|_{L^{p,q}(B^n)}+\|u\|_{L^{p_0,q_0}(B^n)}\big).\label{u1Lnpn-pqbd}
\end{align}
Combining~\eqref{u0Lnpn-pqbd}--\eqref{u1Lnpn-pqbd} concludes the proof of~\eqref{eq:ellcacciop2}.
\end{proof}
\subsection{Extension of Sobolev metrics}
\begin{Lm}\label{lm-ext-metr}
Let $k\in\N^+$, $1\le p\le \infty$, $\La\ge 1$ be a constant. Let $U\subset\R^{n}$ be a bounded Lipschitz domain.  
Then there exists a continuous linear operator
$T\colon
W^{k,p}(U,\R^{n\times n}_{\textup{sym}})
\to
W^{k,p}_{\textup{loc}}(\R^{n},\R^{n\times n}_{\textup{sym}})$ such that for any $g=(g_{ij})\in W^{k,p}(U,\R^{n\times n}_{\textup{sym}})$, the following hold:

\begin{enumerate}[(i)]
\item    \label{Tisexten}
        $Tg=g$ a.e. in $U$.
\item \label{expreuniell}Suppose in addition that 
\begin{align}\label{asssobmet}
        \Lambda^{-1}\mkt  |\xi|^{2}
        \le
        g_{ij}(x)\,\xi^i\xi^j
        \le
        \Lambda\mkt  |\xi|^{2}, \qquad
        \text{for a.e. }x\in U\text{ and all }\,\xi\in\R^{n}.
\end{align}
        Then for a.e. $x\in\R^n$ and all $\xi\in\R^{n}$, the extension $Tg=((Tg)_{ij})$ satisfies
\[
        \frac 12\mkt  \Lambda^{-1}\mkt  |\xi|^{2}
        \le
        (Tg)_{ij}(x)\mkt  \xi^i\xi^j
        \le
        2\mkt  \Lambda\mkt  |\xi|^{2}.
\]
\item \label{wkpbdext}$\g (Tg)\in W^{k-1,p}(\R^n,\R^{n\times n \times n})$. Moreover, suppose $1\le p<\infty$ and there exists a bounded function $\om\colon (0,\infty)\to[0,\infty)$ such that  \begin{align*}
    \sum_{\ell=0}^k \|\g^\ell g\|_{L^p(B_r)} \le \om(r),\qquad \text{for every ball }\,B_r\subset \R^n.
\end{align*}
Then there exists a function $\widetilde \omega\colon (0,\infty)\to [0,\infty)$ depending only on $U$, $\La$, $p$, $k$, and $\omega$, with $\lim_{r\to 0} \widetilde \omega(r)=0$, such that
\[
     \|\g^k (Tg)\|_{L^p(B_r)}\le \wti \om(r),\qquad \text{for every ball }\,B_r\subset \R^n.
\]
\end{enumerate}

\end{Lm}
\begin{hproof3} Similar to \cite[Ch.~VI, Lem.~1]{Stein}, we construct $\psi \in L^\infty( [1,\infty))$ such that 
\begin{equation}\label{moment0}
   \int_1^\infty \psi(t) \,t^\ell \,dt= \begin{dcases} 1 \quad &\ell=0,\\
   0 \quad & \ell=1,\dots,k-1.       
    \end{dcases}
\end{equation}
In addition, for a small constant $\vae=\vae(\La)>0$, we choose $\psi$ satisfying \begin{gather}\label{addirespsi}
    \begin{dcases}
(i)\ \mathrm{supp}\,\psi \text{ is compact,}\\
(ii)\  \int_{1}^{\infty}\psi^{-}(t)\,dt \le \varepsilon.
\end{dcases}
\end{gather}
Following the proof of Stein's extension theorem~\cite[Ch.~VI, Thm.~5]{Stein} and using the weight $\psi$ constructed above, we cover $\p U$ by finitely many cylinders $\{U_{s}\}_{s=1}^N$ and obtain bounded linear extension operators $T_{U_s}\colon W^{k,p}(U)\to W^{k,p}(U_s)$ satisfying the following property: If $g=(g_{ij})$ satisfies~\eqref{asssobmet}, then for a.e. $x\in U_s$ and all $\xi\in \R^n$, it holds that
\begin{equation}\label{conTUs}
    \frac 23  \mkt  \La^{-1} \mkt  |\xi|^2\le T_{U_s}(g_{ij})(x)\mkt  \xi^{i}\xi^{j}\le \frac 43\mkt  \La\mkt  |\xi|^2.
\end{equation} 
Let $U_0\coloneqq U$. We choose a partition of unity $\{\eta_{s}\}_{s=0}^{N}\subset C_{c}^{\infty}(\R^{n})$ with
\[
\sum_{s=0}^{N}\eta_{s}\equiv 1\ \text{ on }\overline{U},\qquad \eta_s\ge 0\ \text{ in }\R^n,\qquad \mathrm{supp}\,\eta_{s}\subset U_{s}\quad(0\le s\le N).
\]
Define \begin{equation}\label{defome1}
    U'\coloneqq\Big\{x\in \R^n\colon \frac 34<\sum_{s=0}^N \eta_s(x)<\frac 32 \Big\}.
    \end{equation}
   Then we have $\overline{U}\subset U'$. Hence we can choose a cut-off function $\vartheta\in C_{c}^{\infty}(\R^n)$ satisfying 
\[
    0\le \vartheta\le 1 \text{ in }\R^n,\qquad   \vartheta\equiv1 \text{ on } \overline{U},\qquad \supp \vartheta \subset U'.
\]
   Now for any $g=(g_{ij})\in W^{k,p}(U,\R^{n\times n}_{\text{sym}})$, we define $\widetilde T g\in W^{k,p}(\R^{n},\R^{n\times n}_{\text{sym}})$ and $T g\in W^{k,p}_{\mathrm{loc}}(\R^{n},\R^{n\times n}_{\text{sym}})$ by
   \begin{align*}
&(\widetilde T g)_{ij}(x)=
\eta_0(x) g_{ij}(x)+\sum_{s=1}^N 
\eta_{s}(x)\mkt  T_{U_s} (g_{ij})(x),\quad &x\in \R^n,\,1\le i,j\le n,\\
&(Tg)_{ij}(x)=\vartheta(x) (\widetilde T g)_{ij}(x)+\big(1-\vartheta (x)\big)\delta_{ij}\fint_{U} g_{11} , \quad &x\in\R^n,\,1\le i,j\le n.
  \end{align*}
Then both $\wti T$ and $T$ are linear extension operators in the sense of~\eqref{Tisexten}. Moreover, combining~\eqref{conTUs}--\eqref{defome1} with the definiton of $T$ implies that $T$ also satisfies~\eqref{expreuniell}. Finally, the condition~\eqref{wkpbdext} follows from the proof of~\cite[Ch.~VI, Thm.~5']{Stein}.
\end{hproof3}
\section{Dirichlet problem for the Hodge Laplacian with $W^{1,n}$ metrics on bounded $C^1$ domains }
\label{sec:hod-dec}
In this section, we study the Dirichlet problem for the Hodge-Laplacian on domains in $\R^n$ ($n\ge 3$), equipped with a uniformly elliptic metric $g\in L^{\infty}\cap W^{1,n}$. 
The case $n=4$ will be used to solve \eqref{d*gL=*gV}--\eqref{dL0=0int} and the equations associated with the conservation laws in Section~\ref{sec:conlaws}. 
Our aim is to obtain right-inverse estimates for the Hodge--Dirac operator $d+d^{*_g}$ via elliptic estimates on differential forms. We prove a Poincar\'e-type inequality in Proposition \ref{prop-dform} for differential forms vanishing on the boundary of a Lipschitz domain, then derive elliptic estimates in Theorem \ref{th:dirdelsol} on $C^1$ domains, and finally prove two versions of the right-inverse estimates for $d+d^{*_g}$ in negative Sobolev--Lorentz spaces in Corollaries~\ref{co-Hod-decw-1p} and \ref{Co:Hod-declp}. We expect that a Hodge decomposition can be established in the framework of this section by combining existing methods.\\

Let $U\subset \R^n$ be a bounded $C^1$ domain, and let $g=(g_{ij})_{1\le i,j\le n}\in L^\infty\cap W^{1,n}(U,\R^{n\times n}_{\text{sym}})$. Suppose that there exists a constant $\La\ge 1$ such that for a.e. $x\in U$ and any $\xi=(\xi^1,\dots, \xi^n)$, it holds that \begin{align*}
    \La^{-1}|\xi|^2 \le g_{ij}(x)\xi^i\xi^j\le \La|\xi|^2.
\end{align*}
Then $g$ defines a metric on $U$. We adopt the convention~\eqref{con-formsob} for Sobolev spaces of differential forms, and define the operators $d^{*_g}$ and $\lap_g$ as in~\eqref{defd*glap}. Then for $p\in (\frac{n}{n-1},n)$, $\lap_g$ is a bounded linear operator from $W^{1,p}\big(U,\bwe^\ell \R^n\big)$ to $W^{-1,p}\big(U,\bwe^\ell \R^n\big)$, see the proof of Lemma \ref{lm-apriosys} together with \eqref{lap-rep}--\eqref{gbcdbd}. \\

Let $\ga=d\beta$ with $\beta\in W^{-1,p}\big(U,\bwe^\ell\R^{n}\big)$,  To solve \eqref{d*gL=*gV}--\eqref{dL0=0int}, we aim to find $\si \in W^{-1,p}\big(U,\bwe^{\ell}\R^n\big)$ solving the system
\begin{equation}\label{exaintdd*}
         \begin{dcases}
            d\mko \si=\ga ,\\
             d^{*_g}\si=0.
         \end{dcases}\qquad \text{in }U.
\end{equation}
Assume that $\beta$ admits a Hodge decomposition, namely, there exist $\tau_1\in L^p\big(U, \bwe^{\ell-1}\R^n\big)$, $\tau_2\in L^p\big(U,\bwe^{\ell+1}\R^n\big)$, and $\kappa\in W^{-1,p}\big(U, \bwe^\ell\R^n\big)$ with $d\kappa=0$ and $d*_g\kappa=0$ such that 
\begin{align}\label{eq:hogdec}
    \beta=d\tau_1+d^{*_g}\tau_2+\kappa.
\end{align}
Then $\si=d^{*_g}\tau_2$ solves \eqref{exaintdd*}. \\

On smooth compact Riemannian manifolds with boundary, the $L^2$ Hodge decomposition goes back to Friedrich~\cite{Friedrich55} and Morrey~\cite{Morrey56}. In 1966, Morrey~\cite{Morrey08} established the following $L^p$ decomposition into exact forms with vanishing tangential component, coexact forms with vanishing normal component, and harmonic forms, for $1<p<\nf$: 
\begin{equation}\label{clahoddec}
L^p\big(M,\bwe^\ell T^* M \big)=dW^{1,p}_T\big(M,\bwe^{\ell-1} T^* M \big)\oplus d^{*_g} W^{1,p}_N \big(M,\bwe^{\ell+1} T^* M\big)\oplus \mathcal{H}^{p}\big(M,\bwe^\ell T^* M \big).
\end{equation}
In 1995, Günter Schwarz~\cite{Schwarz95} further showed that if $\om\in W^{s,p}\big(M,\bwe^\ell T^*M\big)$ with $s\in \N$ and $1<p<\nf$, then in the decomposition $\om=d\tau_{1,\om}+d^{*_g}\tau_{2,\om}+\kappa_{\om}$ as above, we can choose $\tau_{1,\om}$ and $\tau_{2,\om}$ such that 
\[
    \|\tau_{1,\om}\|_{W^{s+1,p}(M)}+ \|\tau_{2,\om}\|_{W^{s+1,p}(M)}\le C(M,g)\mko \|\om\|_{W^{s,p}(M)}.
\]
This gives a complete solvability criterion for~\eqref{exaintdd*} with prescribed tangential boundary value (see~\cite[Ch.~3]{Schwarz95}). The $L^p$ Gaffney inequalities for smooth compact Riemannian manifolds (with and without boundary) were established in~\cite{Scott95, Iwaniec99}. Later, sharp Sobolev--Besov Hodge decompositions on Lipschitz domains in two and three dimensions were proved in~\cite{mitrea02-b,mitrea04}. In 2017, the Besov and Triebel--Lizorkin Hodge decompositions were obtained in~\cite{Ballest17}. For further study on $d+d^{*_g}$ with the flat Euclidean metric on Lipschitz domains in $\R^n$, see~\cite{Jakab09,Mcintosh18}. There are some other generalizations for the Hodge decomposition~\eqref{clahoddec} on smooth Riemannian manifolds, see~\cite{Muller15,  Amar17, Amar20, Murro24} for non-compact complete Riemannian manifolds, and~\cite{Kupferman25} for elliptic pre-complexes.  \\

 We note that if $\al$ satisfies $\lap_g \mko\al=\beta$, then setting $\tau_1=d^{*_g} \al$, $\tau_2=d\al$, and $\kappa=0$ provides a solution for \eqref{eq:hogdec}, and thus \eqref{exaintdd*} is solved. For this reason, we consider the Dirichlet problem: \begin{align}\label{dirpro}
    \begin{dcases}
        \Delta_g \mko\alpha = \beta \;\qquad\quad \text{in }U,\\
        \alpha\in W_0^{1,p}\big(U,\bwe^\ell \R^n \big).
    \end{dcases}
\end{align} 
When $g\in W^{2,r}$ with $r>n$ (hence $g_{ij}\in C^{1,\alpha}$ by Sobolev embedding), solvability and \textit{a priori} estimates for boundary value problems associated with $\Delta_g$ were proved in \cite{mitrea02-a,mitrea01}. In~\cite[Thm.~5.1]{mitrea02-a}, D.~Mitrea--M.~Mitrea proved that for any Lipschitz domain $U$, there exists $\vae=\vae (U)>0$ such that the following holds: If $2-\vae<p<2+\vae$ and $\beta \in L^p\big(U,\bwe^\ell T^* U\big)$ satisfies the standard compatibility conditions, then there exists a solution $\al$ of $\lap_g\mko \al=\beta$ with both $\al\big|_{\p U}$ and $d\al\big|_{\p U}$ tangential, satisfying
\[
\|dd^{*_g}\al\|_{L^p(U)}+\|d^{*_g}d\al\|_{L^p(U)}\le C \|\beta\|_{L^p(U)}.
\]
Using this result, they proved an analogue of the Hodge decomposition~\eqref{clahoddec} for $2-\vae<p<2+\vae$, see~\cite[Sec.~6]{mitrea02-a}.\\



In contrast to the above works, which assume at least $C^{\alpha}$ or Lipschitz metrics (and often smooth geometry), the present work proves the solvability of the Hodge Laplacian Dirichlet problem on differential forms under a much weaker metric regularity: namely a merely $L^{\infty}\cap W^{1,n}$ Riemannian metric (with $n=\dim M$), not even continuous a priori. \\

Writing $\alpha$, $\beta$ in the standard basis $\{dx^I\}$ of $\bwe T^*\R^n$, the equation~\eqref{dirpro} becomes an elliptic system of the form studied in the following lemma, as we verify after the proof of Lemma \ref{lm-apriosys}. For notations on Sobolev and Lorentz spaces, see Section \ref{sec:soblor}.
\begin{Lm}\label{lm-apriosys}
    Let $n,m\in\N$, $n\ge 3$, $n/(n-1)<p\le q<n$.
    Let $U\subset \R^n$ be a bounded open set with $C^1$ boundary, and let $a^{\mkt  ij}_{kl} \in L^\infty\cap W^{1,n}(U)$, $b^{\mkt  i}_{kl} , c^{\mkt  i}_{kl} \in L^n(U)$, $d^{\mkt  k}_l \in L^{n/2}(U)$ for all $1\le i,j\le n$, $1\le k,l\le m$.
   Assume there exists a constant $\La\ge 1$ such that for a.e. $x\in U$ and all $\xi=(\xi_i^k)\in \R^{m\times n}$, there holds
\begin{equation}\label{uniellcon}
     \La^{-1}|\xi|^2\le a^{\mkt  ij}_{kl} (x)\mkt \xi^k_i\mkt \xi^l_j \le \La \mkt  |\xi|^2.
   \end{equation} Let $\omega\colon (0,\infty)\rightarrow [0,\infty)$ be a function satisfying $\lim_{r\rightarrow 0} \omega (r)=0$. Assume that for any $1\le i,j\le n$, $1\le k,l\le m$, and any ball $B_r\subset\R^n$ of radius $r$, the coefficients satisfy
   \begin{equation}
    \big\|\mkt  |\g a^{\mkt  ij}_{kl} |+|b^{\mkt  i}_{kl} |+|c^{\mkt  i}_{kl} |\mkt  \big\|_{L^n(B_r\cap U)}+\big \|d^{\mkt  k}_l\big\|_{L^{\frac n2}(B_r\cap U)}\le \omega(r).\label{dabcwbound}
    \end{equation}
We define the operator $L\colon W^{1,p}(U,\R^m)\rightarrow W^{-1,p}(U,\R^m)$ by setting, for $\vec u=(u^1,\dots, u^m)\in W^{1,p}(U,\R^m)$,
\begin{equation}\label{def-oper}
    (L\vec u)^k\coloneqq \p_i(a^{\mkt  ij}_{kl} \mkt  \p_ju^l)+\p_i(b^{\mkt  i}_{kl}  \mkt  u^l)+c^{\mkt  i}_{kl}  \mkt  \p_i u^l+d^{\mkt  k}_l u^l,\qquad  1\le k\le m.
\end{equation}
Let $\vec f\in W^{-1,q}(U,\R^m)\subset W^{-1,p}(U,\R^m)$. Suppose $\vec u\in W^{1,p}_0(U,\R^m)$ solves the elliptic system $L\vec u=\vec f$.
    Then we have $\vec u\in W^{1,q}_0(U,\R^m)$ along with the \textit{a priori} estimate \begin{equation}\label{apriori-sys}
        \|\g \vec u\|_{L^q(U)}\le C(\La, U, \omega,q,m)\mkt  \big(\|\vec u\|_{L^1(U)}+\|\vec f\|_{W^{-1,q}(U)}\big).
    \end{equation}
\end{Lm}
\begin{proof}
Let 
\begin{equation}\label{defss0}
    \frac{n}{n-1}<s\le s_0\coloneqq \min \Big(q,\frac{np}{n-p}\Big).
\end{equation}
    We define the operator $L_0\colon W_0^{1,s}(U,\R^m)\to W^{-1,s}(U,\R^m)$ by 
    $$
    (L_0\mkt  \vec w)^k\coloneqq \p_i(a^{\mkt  ij}_{kl} \mkt  \p_jw^l),\qquad 1\le k\le m.
    $$ 
 By Poincar\'e inequality, the VMO modulus of $a_{kl}^{\mkt ij}$ satisfies
\[
  \sup_{B_r\subset U}\fint_{B_r}\Big|a_{kl}^{\mkt ij} -\fint_{B_r}a_{kl}^{\mkt ij} \Big| \le C(n) \sup_{B_r\subset \R^n}\big\|\g a^{\mkt  ij}_{kl}  \big\|_{L^n(B_r\cap U)}\le C(n)\mkt \omega(r)\xrightarrow[r\to 0]{}0.
\]
Then by $W^{1,p}$ regularity theory for divergence-form elliptic systems with VMO coefficients (see for instance \cite[Thm.~1.7]{Byun08}), there exists a positive constant $C_s$ depending only on $\La,\, U,\, s,\, \omega$, such that for any $\vec w\in W^{1,s}_0(U,\R^m)$, there holds
\begin{equation}\label{w<cl0w}
\|\vec w\|_{W^{1,s}_0(U)}\le C_s\|L_0\mkt  \vec w \|_{W^{-1,s}(U)}.
\end{equation}
Hence $L_0$ is an isomorphism between the Banach spaces $W^{1,s}_0(U,\R^m)$ and $W^{-1,s}(U,\R^m)$.

Fix $x_0\in\overline U$, and $r>0$ small enough (depending only on $\La,\, U,\, \omega,\, s,\, m$). We define the Banach space
    $\mathscr B_{s,r}\coloneqq W^{1,s}\cap L^{\frac{ns}{n-s}}(B_r(x_0)\cap U,\R^m)$, equipped with the norm 
    \[
        \|\vec w\|_{\mathscr B_{s,r}}\coloneqq\|\g \vec w\|_{L^s(B_r(x_0)\cap U)}+\|\vec w\|_{L^{\frac{ns}{n-s}}(B_r(x_0)\cap U)}.
   \]
 Let $\zeta$ be a $C^\infty$ function supported in $B_r(x_0)$ depending on $r,x_0$ only such that $\zeta\equiv 1$ on $B_{r/2}(x_0)$. Let $\vec v\coloneqq\zeta \vec u$, then we have $\vec v\in \mathscr B_{p,r}$. Now it suffices to show that $\vec v\in \mathscr B_{q,r}$ with the corresponding norm bounded by the right-hand side of \eqref{apriori-sys}.
 
Applying $L_0^{-1}$ to both sides of the equation $L_0 \vec v=(L_0-L)\vec v+L\vec v$, we obtain \[
    \vec v=L_0^{-1}(L_0-L)\vec v+L_0^{-1}L\vec v.
\]
We show that for $r$ small enough, it holds that
\begin{equation}\label{V=Tv+h}
    \|L_0^{-1}(L_0-L)\|_{\mathscr B_{s,r}\to \mathscr B_{s,r} }\le \frac 12.
\end{equation}
Let $\vec w\in \mathscr B_{s,r}$. A priori $\bw$ is only defined on $B_r(x_0)\cap U$. Extending $b^{\mkt  i}_{kl} w^l$, $c^{\mkt  i}_{kl} \mkt  \p_i w^l$, and $d^{\mkt  k}_lw^l$ by $0$ on $U\setminus B_r(x_0)$, we define $(L_0-L)\vec w\in W^{-1,s}(U,\R^m)$ as follows.  By H\"older's inequality, then we estimate \begin{align}\begin{aligned}\label{estbwl}
    \|\p_i(b^{\mkt  i}_{kl} w^l)\|_{W^{-1,s}(U)}&\le \|b^{\mkt  i}_{kl} w^l\|_{L^s(U\cap B_r(x_0))}\\
    &\le \|b^{\mkt  i}_{kl} \|_{L^n(U\cap B_r(x_0))}   \|w^l\|_{L^{\frac{ns}{n-s}}(U \cap B_r(x_0))}
   \\
   &\le \omega(r) \|\vec w\|_{\mathscr B_{s,r}}.
\end{aligned}
\end{align}
Using the embedding $L^{\frac{ns}{n+s}}(U)\hookrightarrow W^{-1,s}(U)$, we also obtain \begin{align}
\begin{aligned}\label{estcpiwl}
        \|c^{\mkt  i}_{kl} \mkt  \p_i w^l\|_{W^{-1,s}(U)}&\le C(n,s) \|c^{\mkt  i}_{kl} \mkt  \p_i w^l\|_{L^{\frac{ns}{n+s}}(U)}\\
        &\le C(n,s) \|c^{\mkt  i}_{kl} \|_{L^n(U\cap B_r(x_0))}\mkt  \|\p_i w^l\|_{L^{s}(U\cap B_r(x_0))}\\
        &\le C(n,s) \mkt  \omega(r) \|\vec w\|_{\mathscr B_{s,r}}.
    \end{aligned}
\end{align}
Similarly, we have \begin{align*}
    \|d^{\mkt  k}_l w^l\|_{W^{-1,s}(U)}&\le C(n,s)\|d^{\mkt  k}_l w^l\|_{L^{\frac{ns}{n+s}}(U)}\\
    &\le C(n,s)\|d^{\mkt  k}_l \|_{L^\frac n2(U\cap B_r(x_0))}\mkt  \| w^l\|_{L^{\frac{ns}{n-s}}(U\cap B_r(x_0))}\\
        &\le C(n,s) \mkt  \omega(r) \|\vec w\|_{\mathscr B_{s,r}}.
\end{align*}
Consequently, we have \begin{align}\begin{aligned}\label{estL0-L}
    \|(L_0-L)\vec w\|_{W^{-1,s}(U)}&\le \sum_{k}\|\p_i(b^{\mkt  i}_{kl}  \mkt  w^l) +c^{\mkt  i}_{kl} \mkt  \p_i w^l+d^{\mkt  k}_l  w^l\|_{W^{-1,s}(U)}\\
   & \le C(n,s,m)\mkt  \omega(r)\|\vec w\|_{\mathscr B_{s,r}}.
    \end{aligned}
\end{align}
In particular, when $\vec w=\vec v$ and $s=p$, this definition coincides precisely with the standard definition of $(L_0-L)\vec v$ since $\vec v\in W_0^{1,p}(U\cap B_r(x_0),\R^m)$.
Using the estimates \eqref{w<cl0w} and \eqref{estL0-L}, we obtain a constant $C_s'$ independent of $r$ such that  \begin{equation}\label{l0l0-lw}\begin{aligned}
    \|L_0^{-1}(L_0-L)\vec w\|_{W^{1,s}_0(U)}&\le C_s\mkt  \|(L_0-L)\vec w\|_{W^{-1,s}(U)}\\
    &\le C_s'\, \omega(r)\|\vec w\|_{\mathscr B_{s,r}}.
    \end{aligned}
\end{equation}
Now we define a linear operator $T$ on $\mathscr B_{s,r}$ by \begin{align*}
    T\vec w\coloneqq(L_0^{-1}(L_0-L)\vec w)\big |_{B_r(x_0)\cap U}.
\end{align*} 
By Sobolev embedding and \eqref{l0l0-lw}, there exists a constant $C_s''$ independent of $r$ such that \begin{align}\label{Tnorm<w}
    \|T\|_{\mathscr B_{s,r}\to\mathscr B_{s,r}}\le C_s''\mkt  \omega(r).
\end{align} Then we choose $r_s>0$ depending on $\La,\, U,\, \omega,\, m,\, s$ only such that \begin{align}\label{csw<1/2}
    C_s''\mkt \omega(r)\le \frac 12,\qquad \text{for all }\, r\le r_s.
\end{align} Let $r_0=\min(r_{p},r_{s_0})$, $\vec h\coloneqq(L_0^{-1}L\vec v)|_{U\cap B_{r_0}(x_0)}$. By direct computation, for $1\le k\le m$, we have \begin{align}\label{gxu<Lxu}\begin{aligned}
    \big(L\vec v\big)^k&=\big(L(\zeta \vec u)\big)^k\\
    &=\zeta \big(L\vec u\big)^k+\p_i\zeta\big(a^{\mkt  ij}_{kl} \mkt  \p_ju^l +b^{\mkt  i}_{kl} \mkt  u^l +c^{\mkt  i}_{kl} \mkt  u^l\big)+\p_i\big(a^{\mkt  ij}_{kl}  \mkt  \p_j\zeta\, u^l\big).
    \end{aligned}
\end{align}
For the first term on the right-hand side, there exists a positive constant $C_1$ depending only on $\La$, $U$, $\omega$, $m$, $p$, $q$ such that \begin{equation}\label{xtLu}
    \|\zeta\mkt  L\vec u\|_{W^{-1,q}(U)}=\|\zeta \mkt  \vec f\|_{W^{-1,q}(U)}\le C_1 \mkt \|\vec f\|_{W^{-1,q}(U)}.
\end{equation}
Let $s$ be as in \eqref{defss0}. Then since $s\le np/(n-p)$ and $\vec u\in W^{1,p}(U)$, by Sobolev embedding we have $\vec u\in L^s(U)$. 

In the following, $C$ will denote a positive constant depending only on $\La$, $U$, $\omega$, $m$, $q$, $s$. Applying the identity $\p_i\zeta\, a^{\mkt  ij}_{kl} \mkt  \p_ju^l =\p_j(\p_i\zeta\,a^{\mkt  ij}_{kl} u^l)-u^l\,\p_j(\p_i\zeta\,a^{\mkt  ij}_{kl} )$,
we estimate as in \eqref{estbwl}--\eqref{estcpiwl}: 
\begin{align}
    \begin{aligned}\label{akpju}
        &\|\p_i\zeta\, a^{\mkt  ij}_{kl} \mkt  \p_ju^l\|_{W^{-1,s}(U)}\\&\le C(n,s)\big( \|\p_i\zeta\,a^{\mkt  ij}_{kl} \mkt  u^l\|_{L^s(U)}+\mkt \|u^l\,\p_j(\p_i\zeta\,a^{\mkt  ij}_{kl} )\|_{L^{\frac{ns}{n+s}}(U)}\big)\\
        &\le C(n,s)\mkt \|u^l\|_{L^s(U)}\big( \|\g \zeta\,a^{\mkt  ij}_{kl} \|_{L^\infty(U)}+\|\g(\p_i\zeta\,a^{\mkt  ij}_{kl} )\|_{L^n(U)}\big)\\
        &\le C\mkt  \|\vec u\|_{L^s(U)}.
    \end{aligned}
\end{align}
The remaining terms are estimated similarly:
\begin{align}\begin{aligned}
\label{pxbc}
    \|\p_i\zeta\mkt  (b^{\mkt  i}_{kl} +c^{\mkt  i}_{kl} )\mkt  u^l\|_{W^{-1,s}(U)}&\le C(n,s)  \|\p_i\zeta\mkt  (b^{\mkt  i}_{kl} +c^{\mkt  i}_{kl} )\mkt  u^l\|_{L^{\frac{ns}{n+s}}(U)}\\
    &\le C(n,s)\mkt  \|\p_i\zeta(b^{\mkt  i}_{kl} +c^{\mkt  i}_{kl} )\|_{L^n(U)}\mkt  \|u^l\|_{L^s(U)}\\
    &\le C \mkt  \|\vec u\|_{L^s(U)}.
    \end{aligned}
\end{align}
We also have
\begin{equation}
\label{piakpjuk}
         \|\p_i(a^{\mkt  ij}_{kl} \mkt  \p_j\zeta\,  u^l)\|_{W^{-1,s}(U)}\le \|a^{\mkt  ij}_{kl} \mkt  \p_j\zeta\, u^l\|_{L^s(U)}
         \le C \mkt  \|\vec u\|_{L^s(U)}.
\end{equation}
Combining \eqref{gxu<Lxu}--\eqref{piakpjuk} yields
   $$\|L\vec v\|_{W^{-1,s}(U)}\le C\big(\|\vec f\|_{W^{-1,q}(U)}+\|\vec u\|_{L^s(U)}\big).$$
Then by applying \eqref{w<cl0w} as in the proof of \eqref{Tnorm<w}, we obtain 
\begin{align}\begin{aligned}\label{h<cfu}
    \|\vec h\|_{\mathscr B_{s,r_0}}&\le C(n,s) \mko\|L_0^{-1}L\vec v\|_{W^{1,s}_0(U)}\\
    &\le C\mkt \|L\vec v\|_{W^{-1,s}(U)}\le C\big(\|\vec f\|_{W^{-1,q}(U)}+\|\vec u\|_{L^s(U)}\big).
    \end{aligned}
\end{align}
In particular, we have $\vec h\in \mathscr B_{s_0,r_0}$. By \eqref{Tnorm<w}, \eqref{csw<1/2}, and definition of $r_0$, we have \begin{gather}\begin{aligned}
\label{Tcont}
    \|T\|_{\mathscr B_{p,r_0}\to\mathscr B_{p,r_0} }\le \frac 12\qquad\text{and}\qquad 
    \|T\|_{\mathscr B_{s_0,r_0}\to\mathscr B_{s_0,r_0} }\le \frac 12.
\end{aligned}    
\end{gather} 
Hence $T$ is a contraction on both $\mathscr B_{p,r_0}$ and $\mathscr B_{s_0,r_0}$. Now by restricting both sides of \eqref{V=Tv+h} on $U\cap B_{r_0}(x_0)$, we have $ \vec v=T\vec v+\vec h$. 
The contraction mapping principle \cite[Thm.~5.1]{Gilbarg01} then implies that there exists a unique solution to the equation $\vec w=T\vec w+\vec h$ in either $\mathscr B_{s_0,r_0}$ or $\mathscr B_{p,r_0}$. Since we have $\mathscr B_{s_0,r_0}\subset \mathscr B_{p,r_0}$ and $\vec v\in \mathscr B_{p,r_0}$, the solution in $\mathscr B_{s_0,r_0}$ coincides with $\vec v$. Moreover, by setting $s=s_0$ in \eqref{h<cfu} and applying \eqref{Tcont}, we have the estimate \begin{align*}
   \|\g \vec u\|_{L^{s_0}(U\cap B_{r_0/2}(x_0))}&\le \|\vec v\|_{\mathscr B_{s_0,r_0}}\\
   &\le 2\|\vec v-T\vec v\|_{\mathscr B_{s_0,r_0}}\\
   &=2\|\vec h\|_{\mathscr B_{s_0,r_0}}\le C_2(\|\vec f\|_{W^{-1,q}(U)}+\|\vec u\|_{L^{s_0}(U)}).
\end{align*}
Covering $\overline U$ by finitely many balls $B_{r_0/2}(x_0)$, we obtain \begin{align*}
    \|\g \vec u\|_{L^{s_0}(U)}\le C_3(\|\vec f\|_{W^{-1,q}(U)}+\|\vec u\|_{L^{s_0}(U)}),
\end{align*}
where $C_2,\,C_3$ depend only on $\La,\, U,\, \omega,\, m,\, s_0,\, q$. If $s_0=q$, then we immediately obtain the estimate \begin{align}\label{gulqfu}
    \|\g \vec u\|_{L^{q}(U)}\le C_4(\|\vec f\|_{W^{-1,q}(U)}+\|\vec u\|_{L^{q}(U)}).
\end{align}
Otherwise, we have $\vec u\in W^{1,s_0}_0(U)$ with $\frac 1{s_0}=\frac 1p-\frac 1n$. In this case, we can replace $p$ by $np/(n-p)$ and repeat the preceding argument. After finitely many iterations, we again arrive at the desired estimate \eqref{gulqfu}, where the constant $C_4$ depends only on $\La,\, U, \,\omega,\, m,\, q$. The estimate~\eqref{apriori-sys} then follows from standard interpolation inequality (see for instance \cite[Thm.~II]{Nirenberg59}):\[
\|\vec u\|_{L^q(U)}\le \vae\mkt  \|\g\vec u\|_{L^q(U)}+C(n,q)\mkt  \vae^{-\frac{n(q-1)}{q}} \|\vec u\|_{L^1(U)}.\qedhere
\]
\end{proof}
Let $U\subset \R^n$ be a bounded domain, $g=(g_{ij})\in W^{1,n}(U,\R^{n\times n}_{\text{sym}})$ be a metric satisfying the uniform ellipticity condition 
\begin{align}\label{ellimet-hod}
    \La^{-1}|\xi|^2 \le g_{ij}(x)\xi^i\xi^j\le \La|\xi|^2,\qquad \text{for a.e. }x\in U\,\text{ and all }\,\xi\in \R^n,
\end{align}for some constant $\La\ge 1$. We denote $\det g$, $g^{ij}$, $d\textup{vol}_g$ as in Notation \eqref{not-met}. 

For $x\in U$, we define $|\g g(x)|^2\coloneqq \sum_{i,j,s} |\p_s\mkt   g_{ij}(x)|^2$. For an $\ell$-form $\alpha$ on $U$, we write $\alpha=\sum_I\alpha_I \, dx^I$,
where $I$ ranges over all strictly increasing multi-indices of length $\ell$. Then by~\cite[Eq.~4.11]{mitrea01}, there exist coefficient tensors $b_I^{\mkt  iJ},\,c_I^{\mkt  iJ},\,d_I^J$ depending only on the metric \(g_{ij}\) such that for any $\ell$-form $\al$, we have
\begin{equation}
\begin{aligned}\label{lap-rep}
(\Delta_{g}\alpha)_{I}&=\sum_{i,j=1}^n \partial_{i}\bigl(g^{ij}\partial_{j}\alpha_{I}\bigr)
+\sum_{i=1}^n\sum_{|J|=\ell} \partial_{i}\bigl(b_{I}^{\mkt  iJ}\alpha_{J}\bigr)\\
&\quad+\sum_{i=1}^n\sum_{|J|=\ell} c_{I}^{\mkt  iJ}\partial_{i}\alpha_{J}
    +\sum_{|J|=\ell} d_{I}^{\mkt  J}\alpha_{J}.
    \end{aligned}
\end{equation}
Moreover, there exists $C=C(\La)>0$ such that   
\begin{align*}
|b_{I}^{\mkt  iJ}|+|c_{I}^{\mkt  iJ}|\le C\mkt  |\nabla g|
\quad\text{and}\quad
|d_{I}^{\mkt  J}|\le C\mkt  |\nabla g|^{2} \quad \text{on }U.
\end{align*}
Consequently, for every ball \(B_r\subset\R^{n}\), we have
\begin{align}\label{gbcdbd}\begin{aligned}
    &\big\|\mkt  |\g g^{ij}|+|b_I^{\mkt  iJ}|+|c_I^{\mkt  iJ}|\mkt  \big\|_{L^n(B_r\cap U)}+\big \|d_I^J\big\|^{1/2}_{L^{n/2}(B_r\cap U)}\\
    &\le C(\La)\mkt  \big\|\g g\big\|_{L^n(B_r\cap U)}.
    \end{aligned}
\end{align}

Therefore, Lemma \ref{lm-apriosys} applies to the Laplace–Beltrami operator acting on differential forms whenever the underlying metric 
$g$ belongs to $W^{1,n}$ and satisfies the uniform ellipticity condition \eqref{ellimet-hod}. Before turning to the applications, we prove some fundamental inequalities for products of distributions.

 Let $n\ge 3$, $p\in (\frac{n}{n-1},n)$, and $U\subset \R^n$ be a bounded Lipschitz domain. Then by the embedding results in Lemma~\ref{lm:LpembW-1}, for all $1\le j\le n$, $a\in L^\nf \cap W^{1,n}(U)$, and $f\in L^{p}(U)$, we have 
\begin{align*}
    \|a \mkt \p_{j} f  \|_{W^{-1,p}(U)}
    &\le \|\p_{j}(af)\|_{W^{-1,p}(U)}+C(n,p)\|f\mkt\p_{j} a \|_{L^\frac{np}{n+p}(U)}\\
    &\le \|af\|_{L^{p}(U)}+C(n,p)\|\p_{j} a\|_{L^n(U)} \|f\|_{L^{p}(U)}\\
    &\le C(n,p) \|a\|_{L^\nf \cap W^{1,n}(U)} \mko\|f\|_{L^{p}(U)}.
\end{align*}
It follows that for all $a\in L^\nf \cap W^{1,n}(U)$ and $T\in W^{-1,p}(U)$,
\begin{align}\label{prornd}
    \|aT\|_{W^{-1,p}(U)}\le C(n,p)\mko\|a\|_{L^\nf \cap W^{1,n}(U)} \mko \|T\|_{W^{-1,p}(U)}.
\end{align}  
By the Sobolev embedding, we also obtain that for all $a\in L^\nf \cap W^{1,n}(U)$ and $f\in W^{1,p}(U)$,
\begin{align}\label{prolorgend}
     \|af\|_{W^{1,p}(U)}\le C(n,p,U)\mko\|a\|_{L^\nf \cap W^{1,n}(U)} \mko \|f\|_{W^{1,p}(U)}.
\end{align}

In the remainder of this section, we fix $\La\ge 1$ and a function $\omega\colon (0,\infty)\to [0,\infty)$ with $\lim_{r\rightarrow 0} \omega (r)=0$. We consider metrics $g=(g_{ij})\in L^\infty\cap W^{1,n}(U,\R^{n\times n}_{\sym})$ satisfying: 
\begin{align}\label{w1nassg}
    \begin{dcases}
     \Lambda^{-1}\mkt  |\xi|^{2}\le g_{ij}(x)\mkern2mu \xi^{i}\xi^{j}\le \Lambda\mkt  |\xi|^{2},
        \quad&\text{for a.e. } x\in U\text{ and all }\mkt \xi\in\R^{n}, \\
         \|\g g\|_{L^n(B_r\cap U)}\le \om(r), &\text{for every ball }\mkt B_r\subset \R^n.
    \end{dcases}
\end{align}
Combining~\eqref{prornd}--\eqref{w1nassg}, we then obtain
\begin{subnumcases}{}
    \| *_{g}\|_{W^{-1,p}\lf(U,\scriptstyle\bigwedge^\ell \R^n\rg)\to W^{-1,p}\lf(U,\scriptstyle\bigwedge^{n-\ell} \R^n\rg)}\le C(\La,U,p,\om),\label{*estlornd}\\
    \| *_{g}\|_{W^{1,p}\lf(U,\scriptstyle\bigwedge^\ell\R^n\rg)\to W^{1,p}\lf(U,\scriptstyle\bigwedge^{n-\ell}\R^n\rg)}\le C(\La,U,p,\om).\label{*estW1pnd}
\end{subnumcases} 
Consequently, since $\lap_g=-(dd^{*_g}+d^{*_g}d)$, we have
\begin{align}
      \| \lap_{g}\|_{W^{1,p}(U,\bwe \R^n)\to W^{-1,p}(U,\bwe \R^n)}\le C(\La,U,p,\om).\label{*estlapnd}
\end{align}

Under the assumptions \eqref{w1nassg}, Lemmas~\ref{lm-apriosys} implies a Poincar\'e-type inequality for differential forms in $W^{1,p}_0$, where $p\in (\frac n{n-1},n)$.
\begin{Prop}\label{prop-dform}
    Let $n\ge 3$, $p\in (\frac{n}{n-1},n)$, and $U\subset \R^n$ be a bounded Lipschitz domain. 
    Suppose $g=(g_{ij})$ satisfies~\eqref{w1nassg}.
Then for any differential $\ell$-form $\alpha\in W^{1,p}_0\big(U,\bwe^\ell \R^n\big)$ with $0\le \ell\le n$, we have
    \begin{equation}\label{apriori-form}
        \|\alpha\|_{W^{1,p}_0(U)}\le C(\La,U,p,\omega)\big(\|d\mko \alpha\|_{L^{p}(U)}+\|d^{*_g}\alpha\|_{L^p(U)}\big).
    \end{equation}
\end{Prop}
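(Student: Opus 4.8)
The strategy is to reduce the Gaffney estimate \eqref{apriori-form} to the elliptic a priori estimate \eqref{apriori-sys} of Lemma~\ref{lm-apriosys} applied to the Hodge--Laplacian $\Delta_g$ written in the component form \eqref{lap-rep}. The key observation is that for a form $\alpha\in W^{1,p}_0(U,\bwe^\ell\R^n)$ one has the integration-by-parts identity controlling $\|\alpha\|_{W^{1,p}_0(U)}$ in terms of $\|\Delta_g\alpha\|_{W^{-1,p}(U)}$ plus lower-order terms, but $\Delta_g\alpha=-(d\mko d^{*_g}+d^{*_g}d)\alpha$, so $\|\Delta_g\alpha\|_{W^{-1,p}(U)}\lesssim \|d^{*_g}d\alpha\|_{W^{-1,p}(U)}+\|d\mko d^{*_g}\alpha\|_{W^{-1,p}(U)}\lesssim \|d\alpha\|_{L^p(U)}+\|d^{*_g}\alpha\|_{L^p(U)}$, where the last inequality uses \eqref{*estlornd} to bound the codifferential $d^{*_g}=\pm *_g d\,*_g$ from $L^p$ to $W^{-1,p}$.

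\textbf{Step 1 (the elliptic system).} Recall from \eqref{lap-rep} that the components $\{(\Delta_g\alpha)_I\}_{|I|=\ell}$ form an elliptic operator $L$ of the type \eqref{def-oper} acting on the vector $\vec u=(\alpha_I)_{|I|=\ell}\in W^{1,p}_0(U,\R^m)$ with $m=\binom n\ell$, whose leading coefficients $a^{iI}_{jJ}=g^{ij}\mathbf 1_{\{I=J\}}$ satisfy the ellipticity bound \eqref{uniellcon} (after symmetrization, using \eqref{ellimet-hod}) and whose lower-order coefficients $b^{iJ}_I,c^{iJ}_I,d^J_I$ satisfy \eqref{gbcdbd}, i.e. the smallness condition \eqref{dabcwbound} with the modulus $\omega$ from \eqref{w1nassg}. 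Thus $\vec u$ solves $L\vec u=\vec f$ with $\vec f=(\,(\Delta_g\alpha)_I\,)\in W^{-1,p}(U,\R^m)$.

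\textbf{Step 2 (applying Lemma~\ref{lm-apriosys} and bounding $\vec f$).} Apply \eqref{apriori-sys} with $q=p$ to get
\[
\|\g\alpha\|_{L^p(U)}\le C(\La,U,\omega,p)\big(\|\alpha\|_{L^1(U)}+\|\Delta_g\alpha\|_{W^{-1,p}(U)}\big).
\]
For the first term, $\|\alpha\|_{L^1(U)}\le C(U,p)\|\alpha\|_{L^p(U)}$, and then a standard interpolation inequality as in the end of the proof of Lemma~\ref{lm-apriosys}, namely $\|\alpha\|_{L^p(U)}\le \vae\|\g\alpha\|_{L^p(U)}+C(n,p,\vae)\|\alpha\|_{L^1(U)}$, lets us absorb it into the left-hand side after iterating once more to reduce $\|\alpha\|_{L^1}$; alternatively one applies Poincar\'e directly since $\alpha\in W^{1,p}_0(U)$, giving $\|\alpha\|_{L^p(U)}\le C(U,p)\|\g\alpha\|_{L^p(U)}$ and hence absorbing the lower-order term is immediate. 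For the second term, write $\Delta_g\alpha=-d\mko d^{*_g}\alpha-d^{*_g}d\alpha$. Since $d^{*_g}=(-1)^{n(\ell+1)+1}*_g d\,*_g$ on $(\ell+1)$-forms, \eqref{*estlornd} gives $\|d^{*_g}d\alpha\|_{W^{-1,p}(U)}\le C(\La,U,p,\omega)\|*_g d(*_g d\alpha)\|_{W^{-1,p}(U)}$; here $*_g d\alpha\in L^p$ is mapped by $d$ into $W^{-1,p}$, then $*_g$ preserves $W^{-1,p}$ by \eqref{*estlornd}, so $\|d^{*_g}d\alpha\|_{W^{-1,p}(U)}\le C\|d\alpha\|_{L^p(U)}$. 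Similarly $\|d\mko d^{*_g}\alpha\|_{W^{-1,p}(U)}\le C\|d^{*_g}\alpha\|_{L^p(U)}$ since $d$ maps $L^p$ into $W^{-1,p}$. Combining these with Poincar\'e yields \eqref{apriori-form}.

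\textbf{Main obstacle.} The delicate point is not the algebra but the bookkeeping of function-space mapping properties for $*_g$ when the metric is only $L^\infty\cap W^{1,n}$: one must verify that $*_g$ acting on a $W^{-1,p}$-distribution is again in $W^{-1,p}$ with a controlled norm. This is exactly \eqref{*estlornd}, which in turn rests on the product estimate \eqref{prornd}; the subtlety is that $*_g$ is multiplication by quantities polynomial in $g_{ij}$ and $(\det g)^{\pm 1/2}$, which lie in $L^\infty\cap W^{1,n}$ by \eqref{w1nassg} (using $p>\tfrac n{n-1}$ so that $W^{-1,p}$ pairs correctly with $W^{1,p'}$ and the product rule for distributions times $L^\infty\cap W^{1,n}$ functions holds). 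Once this mapping property is in hand, the proof is a direct concatenation of Lemma~\ref{lm-apriosys}, the definition \eqref{defd*glap} of $d^{*_g}$, and the Poincar\'e inequality on $W^{1,p}_0(U)$.
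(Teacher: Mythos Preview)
Your Steps 1 and 2 correctly reproduce the inequality
\[
\|\nabla\alpha\|_{L^p(U)}\le C_0(\La,U,p,\omega)\big(\|\alpha\|_{L^1(U)}+\|d\alpha\|_{L^p(U)}+\|d^{*_g}\alpha\|_{L^p(U)}\big),
\]
which is also the starting point of the paper's proof. The gap is in your treatment of the lower-order term $\|\alpha\|_{L^1(U)}$. You write that Poincar\'e gives $\|\alpha\|_{L^1(U)}\le C(U,p)\|\nabla\alpha\|_{L^p(U)}$ and ``hence absorbing the lower-order term is immediate'', but this yields
\[
\|\nabla\alpha\|_{L^p(U)}\le C_0\, C(U,p)\,\|\nabla\alpha\|_{L^p(U)}+C_0\big(\|d\alpha\|_{L^p(U)}+\|d^{*_g}\alpha\|_{L^p(U)}\big),
\]
and there is no reason for the product $C_0\,C(U,p)$ to be less than $1$. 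Your alternative via interpolation has the same defect: the interpolation inequality produces $\varepsilon\|\nabla\alpha\|_{L^p}+C(\varepsilon)\|\alpha\|_{L^1}$, so the $L^1$-term persists with a large constant and you are back where you started.

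The paper closes this gap by a compactness/contradiction argument. Assuming \eqref{apriori-form} fails, one normalizes a sequence $\alpha_k$ (and metrics $g_k$) so that $\|\nabla\alpha_k\|_{L^p}=1$ and $\|d\alpha_k\|+\|d^{*_{g_k}}\alpha_k\|\to 0$; the estimate above then forces $\liminf\|\alpha_k\|_{L^1}\ge C_0^{-1}>0$. By Rellich--Kondrachov one extracts a weak limit $\alpha\in W^{1,p}_0$ with $d\alpha=0$, $d^{*_g}\alpha=0$, and $\|\alpha\|_{L^1}>0$. The genuine extra ingredient---which your argument never invokes---is that such a harmonic form in $W^{1,p}_0$ must vanish: the paper upgrades $\alpha$ to $W^{1,2}_0$ via Lemma~\ref{lm-apriosys}, writes $\alpha=d\tilde\alpha$ on a ball by the weak Poincar\'e lemma, and integrates by parts to conclude $\int|\alpha|_g^2\,d\mathrm{vol}_g=0$. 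This triviality of the kernel is the step that cannot be bypassed by Poincar\'e or interpolation alone.
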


\begin{proof}
    If $\ell=0$, then $\eqref{apriori-form}$ follows from Poincar\'e inequality. We now assume $1\le \ell\le n$.\\
    By Lemma~\ref{lm-ext-metr}, the metric $g$ admits an extension $\tilde g \coloneqq T g\in W^{1,n}_{\textup{loc}}(\R^n,\R^{n\times n}_{\textup{sym}})$ such that
\begin{align*}
        \frac 12 \mkt  \Lambda^{-1}|\xi|^{2}\le \tilde g_{ij}(x)\mkt  \xi^{i}\xi^{j}\le 2\mkt  \Lambda\mkt  |\xi|^{2},
     \qquad\text{for a.e. } x\in \R^n\text{ and all }\mkt \xi\in\R^{n}.
    \end{align*}
Moreover, there exists a function $\widetilde \omega\colon (0,\infty)\to [0,\infty)$ depending only on $U$, $\omega$, and $\La$, with $\lim_{r\to 0} \widetilde \omega(r)=0$, such that
\[
     \|\g \tilde g\|_{L^n(B_r)}\le \wti \om(r),\qquad \text{for every ball }\mkt B_r\subset \R^n.
\]
Let $B$ be a fixed open ball containing $U$; all constants depending on $B$ may thus be regarded as depending on $U$. Then $W_0^{1,p}\big(U, \bwe^\ell\R^n\big)$ embeds naturally into $W_0^{1,p}\big(B, \bwe^\ell\R^n\big)$, and for any $\al\in W_0^{1,p}\big(U, \bwe^\ell\R^n\big)$, we have 
\[
    \|d\mko \al\|_{L^p(U)}=\|d\mko \al\|_{L^p(B)},\qquad \|d^{*_g}\al\|_{L^p(U)}= \|d^{*_{\tilde g}}\al\|_{L^p(B)}.
\]
Hence it suffices to prove the estimate on $B$. Suppose that \eqref{apriori-form} fails. Then by the preceding discussion, there exist a sequence of differential forms $\{\alpha_k\}_{k=1}^\infty\subset W^{1,p}_0\big(B,\bwe^\ell\R^n\big)$ and a sequence of metrics $\{g_k\}_{k=1}^\infty\subset W^{1,n}(B,\R^{n\times n}_{\textup{sym}})$ such that:
\begin{enumerate}[(i)]
       \item For a.e. $x\in B$, any $k\in \N$, and all $\xi\in\R^{n}$, it holds that 
       \begin{align}\label{gaffneyuniellseq}
            \frac 12 \mkt  \Lambda^{-1}\mkt  |\xi|^{2}\le g_{k,ij}(x)\mkern2mu \xi^{i}\xi^{j}\le  2\mkt  \Lambda\mkt  |\xi|^{2}.
       \end{align}  
     \item  For each $k\in \N$, it holds that
     \begin{align}\label{gaffLnbdseq}
            \|\g g_k\|_{L^n(B_r\cap B)}\le \wti \om(r),\qquad \text{for every ball }\mkt B_r\subset \R^n.
     \end{align}
    \item The sequence $\{\al_k\}$ satisfies
    \begin{align}\label{contra-hyp}
             \|\g \alpha_k\|_{L^{p}(B)}=1,\qquad\|d\mko \alpha_k\|_{L^p(B)}+\|d^{*_{g_k}}\alpha_k\|_{L^p(B)}\xrightarrow[k\to\infty]{} 0.
    \end{align}
     \end{enumerate}
Since the representation \eqref{lap-rep} and the bound \eqref{gbcdbd} for the coefficients hold uniformly for each metric $g_k$, the operators $\Delta_{{g_k}}=-(dd^{*_{g_k}}+d^{*_{g_k}}d)$ satisfy the hypotheses of Lemma \ref{lm-apriosys}. Hence the lemma yields a constant $C_0=C_0(\La,U,p,\omega)>0$ such that for any $k$, there holds
\begin{align}\label{estpriseq}
        \|\g \al_k\|_{L^p(B)}\le C_0\big(\|\al_k\|_{L^1(B)}+\|\Delta_{g_k} \al_k\|_{W^{-1,p}(B)}\big).
    \end{align}
By~\eqref{contra-hyp} and~\eqref{*estlornd}, we obtain 
\begin{align}\begin{aligned}\label{lapgkalbd}
       \|\Delta_{g_k}\alpha_k\|_{W^{-1,p}(B)}&\le \|dd^{*_{g_k}}\alpha_k\|_{W^{-1,p}(B)}+\|*_{g_k}d*_{g_k}d\mko \alpha_k\|_{W^{-1,p}(B)}\\
       &\le  C(\La,U,p,\omega) \big(\|d^{*_{g_k}}\alpha_k\|_{L^p(B)}+\|d\mko \alpha_k\|_{L^p(B)}\big)
       \\ 
       &\xrightarrow[k\rightarrow\infty]{} 0.
         \end{aligned}
   \end{align}
Inserting \eqref{lapgkalbd} into \eqref{estpriseq} and using \eqref{contra-hyp}, we deduce
   \begin{align}\label{lbdalk}
       \liminf_{k\rightarrow \infty} \|\al_k\|_{L^1(B)}\ge C_0^{-1}.
   \end{align}
Since $\{\al_k\}$ and $\{g_{k}\}$ are bounded in $W_0^{1,p}\big(B,\bwe^\ell \R^n\big)$ and $W^{1,n}(B,\R^{n\times n}_{\text{sym}})$ respectively, there exist subsequences (still denoted by $\{\alpha_{k}\}$, $\{g_{k}\}$) and $\al\in W_0^{1,p}\big(B,\bwe^\ell \R^n\big)$, $g=(g_{ij})\in W^{1,n}(B,\R^{n\times n}_{\text{sym}})$ such that 
\begin{equation}\label{weakconag}
    \al_{k}\rightharpoonup \al\,\text{ in }W_0^{1,p}\big(B, \bwe^\ell \R^n\big),\qquad g_{k}\rightharpoonup g \,\text{ in }W^{1,n}(B, \R^{n\times n}_{\text{sym}}).
\end{equation} 
The Rellich–Kondrachov compact embedding then implies 
\begin{align}\label{strconag}
    \al_{k}\to \al\,\text{ in }L^{p}\big(B, \bwe^\ell \R^n\big),\qquad g_{k}\to g \,\text{ in }L^n(B, \R^{n\times n}_{\text{sym}}).
\end{align} 
After possibly passing to a subsequence, we may also assume $g_k \to g$ a.e. in $B$. Since $\nabla g_k \rightharpoonup \nabla g$ in $L^n(B)$, by \eqref{gaffneyuniellseq}--\eqref{gaffLnbdseq} and the weak lower semicontinuity of the $L^n$ norm, we obtain
\vspace{0.5ex}
\begin{align*}
\begin{dcases}
     \frac 12 \mkt  \Lambda^{-1}\mkt  |\xi|^{2}\le g_{ij}(x)\mkern2mu \xi^{i}\xi^{j}\le  2\mkt  \Lambda\mkt  |\xi|^{2},
        \qquad&\text{for a.e. } x\in B\text{ and all }\mkt \xi\in\R^{n}, \\
         \|\g g\|_{L^n(B_r\cap B)}\le \wti \om(r), &\text{for every ball }\mkt B_r\subset \R^n.
         \end{dcases}
\end{align*} 
Moreover, the convergences \eqref{weakconag}--\eqref{strconag} together with H\"older's inequality imply that \begin{align*}
    d\mko \al_{k}\rightharpoonup d\mko \al\,\text{ in }L^{p}\big(B, \bwe^{\ell+1} \R^n\big),\qquad d^{*_{g_k}}\al_{k}\rightharpoonup d^{*_{g}}\al \,\text{ in }L^{p}\big(B, \bwe^{\ell-1} \R^n\big).
\end{align*}
Hence, by the weak lower semicontinuity of the $L^p$ norm and the assumption \eqref{contra-hyp}, we have \begin{align*}
    &\|d\mko \alpha\|_{L^p(B)}\le \liminf_{k\rightarrow\infty} \|d\mko \al_k\|_{L^p(B)}=0,\\
    &\|d^{*_{g}}\al\|_{L^p(B)}\le \liminf_{k\rightarrow\infty} \|d^{*_{g_k}}\al_{k}\|_{L^p(B)}=0.
\end{align*}
Thus $d\mko \al=0$ and $d^{*_g}\al=0$, hence $\lap_g\mko \al=0$. Lemma~\ref{lm-apriosys} then implies $\al\in W^{1,2}_0\big(B,\bwe^\ell \R^n\big)$. By \cite[Cor.~3.4]{Costa10}, there exists $\wti \al\in W^{2,2}\big(B,\bwe^{\ell-1} \R^n\big)$ such that $d\wti \al=\al$. Using $d*_g\mko \al=0$ in $B$, we obtain
\begin{align}
\begin{aligned}
   \label{inbypa1n}
    \int_B \lan \al,\al \ran_g \,d\textup{vol}_g&=  \int_B \lan d \wti\al,\al \ran_g \, d\textup{vol}_g\\
    &=\int_B d\wti \al \we *_g\mko \al
    =\int_B d(\wti \al \we *_g\mko \al).
    \end{aligned}
\end{align}
By Stokes' theorem (valid for $W^{1,1}$ forms), the last integral vanishes since $\al\in W^{1,2}_0\big(B,\bwe^\ell \R^n \big)$. Consequently $\al=0$ a.e. in $B$. However, the lower bound~\eqref{lbdalk} and the convergence~\eqref{strconag} imply that $\|\al\|_{L^1(B)}\ge C_0^{-1}>0$, which is a contradiction.
\end{proof}
\begin{Th}\label{th:dirdelsol}
     Let $n\ge 3$, $p\in(\frac{n}{n-1},n)$, and $U\subset \R^n$ be a bounded $C^1$ domain. Suppose $g=(g_{ij})\in L^\infty\cap W^{1,n}(U,\R^{n\times n}_{\sym})$ satisfies \eqref{w1nassg}. Then for any $\beta\in W^{-1,p}\big(U,\bwe^\ell \R^n\big)$ with $0\le \ell\le n$, there exists a unique $\al$ solving the following Dirichlet problem:
     \begin{align}\label{eq-dirpro}
    \begin{dcases}
        \Delta_g \mko \alpha = \beta \qquad\quad \text{in }U,\\
        \alpha\in W_0^{1,p}\big(U,\bwe^\ell \R^n\big).
    \end{dcases}
\end{align} 
We also have the estimate
\begin{equation}\label{dibdform}
    \|\al\|_{W^{1,p}_0(U)} \le C(\La,U,p,\om) \|\beta\|_{W^{-1,p}(U)}.
\end{equation}
\end{Th}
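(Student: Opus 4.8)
The plan is to recast the Dirichlet problem, via the representation \eqref{lap-rep}, as the elliptic system $L\vec u=\vec f$ of Lemma~\ref{lm-apriosys} acting on the components $\{\alpha_I\}$ of $\alpha$ --- its coefficients satisfy \eqref{uniellcon}--\eqref{dabcwbound} because of \eqref{gbcdbd} and \eqref{w1nassg} --- and then to combine the a priori estimate of that lemma with the Gaffney inequality of Proposition~\ref{prop-dform} (for uniqueness) and a Hilbert-space solvability result at the exponent $2$ (for existence). Throughout one uses that $\Delta_g$ is bounded $W^{1,p}_0(U,\bwe^\ell\R^n)\to W^{-1,p}(U,\bwe^\ell\R^n)$ by \eqref{*estlapnd}, that the interval $(\tfrac{n}{n-1},n)$ is stable under $p\mapsto p'$, and that $2\in(\tfrac{n}{n-1},n)$ since $n\ge 3$. \emph{Uniqueness:} if $\alpha\in W^{1,p}_0$ solves $\Delta_g\alpha=0$, I first upgrade $\alpha$ to $W^{1,2}_0$ --- automatic if $p\ge 2$ on the bounded set $U$, and otherwise supplied by Lemma~\ref{lm-apriosys} with vanishing right-hand side. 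Approximating $\alpha$ in $W^{1,2}$ by compactly supported smooth forms and integrating by parts through \eqref{d^*adjoi} gives $0=\int_U\langle\Delta_g\alpha,\alpha\rangle_g\,\dvol_g=-\int_U(|d\alpha|_g^2+|d^{*_g}\alpha|_g^2)\,\dvol_g$, so $d\alpha=d^{*_g}\alpha=0$, and Proposition~\ref{prop-dform} forces $\alpha=0$; the same reasoning applied at the conjugate exponent shows $\Delta_g$ is injective on $W^{1,p'}_0$ as well.

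Next I establish the a priori estimate \eqref{dibdform}. Lemma~\ref{lm-apriosys} gives $\|\nabla\alpha\|_{L^p(U)}\le C(\|\alpha\|_{L^1(U)}+\|\Delta_g\alpha\|_{W^{-1,p}(U)})$ for all $\alpha\in W^{1,p}_0$, and I claim the lower-order term can be absorbed, so that $\|\alpha\|_{W^{1,p}_0(U)}\le C(\La,U,p,\omega)\|\Delta_g\alpha\|_{W^{-1,p}(U)}$. If not, there is a sequence $\alpha_k$ with $\|\alpha_k\|_{W^{1,p}_0}=1$ and $\|\Delta_g\alpha_k\|_{W^{-1,p}}\to0$; the displayed inequality gives $\liminf_k\|\alpha_k\|_{L^1}>0$, while along a subsequence $\alpha_k\rightharpoonup\alpha$ in $W^{1,p}_0$, $\alpha_k\to\alpha$ in $L^p$ by Rellich, and $\Delta_g\alpha_k\rightharpoonup\Delta_g\alpha$ in $W^{-1,p}$ by continuity of $\Delta_g$; hence $\alpha\ne0$ but $\Delta_g\alpha=0$, contradicting uniqueness. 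In particular the range of $\Delta_g\colon W^{1,p}_0\to W^{-1,p}$ is closed.

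It remains to show this range is dense, for which it suffices to solve \eqref{eq-dirpro} for every $\beta\in C^\infty_c(U,\bwe^\ell\R^n)$, as these are dense in $W^{-1,p}$. Fix such a $\beta$; then $\beta\in L^2\subset W^{-1,2}$, and on the Hilbert space $W^{1,2}_0(U,\bwe^\ell\R^n)$ the symmetric bilinear form $B(\alpha,\varphi)\coloneqq\int_U(\langle d\alpha,d\varphi\rangle_g+\langle d^{*_g}\alpha,d^{*_g}\varphi\rangle_g)\,\dvol_g$ is bounded and, by Proposition~\ref{prop-dform} with $p=2$, coercive, so Lax--Milgram produces $\alpha\in W^{1,2}_0$ with $B(\alpha,\varphi)=-\int_U\langle\beta,\varphi\rangle_g\,\dvol_g$ for all $\varphi$; testing against $\varphi\in C^\infty_c$ and integrating by parts via \eqref{d^*adjoi} shows $\Delta_g\alpha=\beta$ in $\mathcal D'(U)$. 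If $p\le 2$ then $\alpha\in W^{1,2}_0\subset W^{1,p}_0$; if $p>2$, Lemma~\ref{lm-apriosys} applied with starting exponent $2$ and target exponent $p$ (legitimate since $2\le p<n$ and $\beta\in W^{-1,p}$) upgrades $\alpha$ to $W^{1,p}_0$. Thus $\textup{Range}(\Delta_g)\supset C^\infty_c$; being closed and dense, it equals all of $W^{-1,p}$, so \eqref{eq-dirpro} is solvable, uniquely by the first step, with the bound \eqref{dibdform}.

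I expect the main obstacle to be the a priori estimate stripped of the $\|\alpha\|_{L^1}$ term: the compactness argument relies on both the quantitative estimate and the full bootstrap of Lemma~\ref{lm-apriosys}, as well as on the Gaffney inequality of Proposition~\ref{prop-dform}, and since $g$ is merely $W^{1,n}$ --- hence a priori discontinuous --- every integration by parts and every product of a metric coefficient with a Sobolev form must be justified through the continuity properties \eqref{prornd}--\eqref{prolorgend} and \eqref{*estW1pnd}. A subsidiary nuisance is matching the Hilbert base case at exponent $2$ to general $p\in(\tfrac{n}{n-1},n)$, which forces the case split $p\le2$ versus $p>2$ when invoking the regularity bootstrap.
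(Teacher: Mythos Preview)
Your proof is correct and takes a somewhat different route from the paper's. The paper proceeds in three cases: at $p=2$ it uses Lax--Milgram with the coercivity coming from Proposition~\ref{prop-dform} (as you do); for $p>2$ it bootstraps via Lemma~\ref{lm-apriosys} (again as you do); but for $p<2$ it argues by \emph{duality} --- solving the adjoint problem $\Delta_g\varphi=d^{*_g}\tau$ at the conjugate exponent $p'>2$, pairing, and obtaining $L^p$ bounds on $d\alpha$ and $d^{*_g}\alpha$ separately before invoking Proposition~\ref{prop-dform}. You instead dispatch $p<2$ more directly: Lemma~\ref{lm-apriosys} (with $q=2$, $\vec f=0$) upgrades any $W^{1,p}_0$ solution of $\Delta_g\alpha=0$ to $W^{1,2}_0$ for uniqueness, and the inclusion $W^{1,2}_0\subset W^{1,p}_0$ handles existence on smooth data. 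Your approach is more unified across $p$ and avoids the duality machinery, at the price of the non-constructive compactness step for the a~priori bound; the paper's argument is fully constructive and never appeals to compactness at this stage.

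One small point to tighten: your compactness argument, as written, fixes $g$ and extracts a contradiction from a sequence $\alpha_k$, so the constant it produces a~priori depends on $g$ rather than only on $(\Lambda,U,p,\omega)$ as claimed in \eqref{dibdform}. To recover the uniform dependence you should let the metric vary as well --- take sequences $g_k$ satisfying \eqref{w1nassg} and $\alpha_k\in W^{1,p}_0$ with $\|\alpha_k\|_{W^{1,p}_0}=1$, $\|\Delta_{g_k}\alpha_k\|_{W^{-1,p}}\to 0$, and pass to weak limits in both --- exactly as in the proof of Proposition~\ref{prop-dform}. All the ingredients (the constant in Lemma~\ref{lm-apriosys}, the Gaffney inequality, and your uniqueness step) are already uniform in $g$, so this extension goes through without difficulty.
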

\begin{proof}
For $\al_1,\al_2\in C^\nf_c\big(U,\bwe^\ell \R^n\big)$, set 
\begin{align*}
    & (\al_1,\al_2)_g\coloneqq\int_U \lan \al_1 ,\al_2 \ran_g\,\dvol_g,\\
    &\mathcal B_g(\alpha_1,\al_2)\coloneqq\int_U\big(\,\langle d\mko \al_1,d\mko \al_2\rangle_g+\langle d^{*_{g}}\al_1,d^{*_g}\al_2\rangle_g\big)\,\dvol_g.
\end{align*}
Using integration by parts as in~\eqref{d^*adjoi}, we have 
\begin{align}\label{delaa=Baa}
\begin{aligned}
    ( \lap_g\al_1,\al_2)_g&=\int_U \big\lan -(dd^{*_g}+d^{*_g}d)\al_1,\al_2\big\ran_g\,\dvol_g\\
    &=-\int_U \big(\,\langle d\mko \al_1,d\mko \al_2\rangle_g+\langle d^{*_{g}}\al_1,d^{*_g}\al_2\rangle_g\big)\,\dvol_g\\
    &=-\mathcal B_g(\alpha_1,\al_2).
\end{aligned}
\end{align}
Let $\al_1=\sum_I\al_{1,I}\mkt dx^I,\al_2=\sum_J \al_{2,J}\mkt dx^J\in C^\nf_c\big(U,\bwe^\ell \R^n\big)$. Denote $p'=\frac p{p-1}$. By~\eqref{prolorgend}, we have 
     \begin{align}\label{al1ap2inn}
     \begin{aligned}
          (\al_1,\al_2)_g&=\int_U\sum_{I,J} \al_{1,I}\mkt\al_{2,J}\,\lan dx^I,dx^J \ran_g\,\dvol_g\\
          &\le \sum_{I,J}\big\|\al_{1,I}\big\|_{W^{-1,p}(U)} \,\big\|\al_{2,J} \mkt \lan dx^I,dx^J \ran_g\mkt (\det g)^{\frac 12}\big\|_{W^{1,p'}_0(U)} \\
          &\le C(\La,U,p,\om) \|\al_1 \|_{W^{-1,p}(U)}\mko \|\al_2\|_{W^{1,p'}_0(U)}.
          \end{aligned}
     \end{align}
It follows that $(\cdot,\cdot)_g$ extends uniquely to a continuous bilinear map from $W^{-1,p}\big(U,\bwe^\ell \R^n\big)\times W^{1,p'}_0\big(U,\bwe^\ell \R^n\big)$ to $\R$. By~\eqref{*estW1pnd} and H\"older's inequality, we also obtain that $\mathcal B_g$ extends uniquely to a continuous bilinear map from $W_0^{1,p}\big(U,\bwe^\ell \R^n\big)\times W^{1,p'}_0\big(U,\bwe^\ell \R^n\big)$ to $\R$. We now establish the existence and uniqueness for the Dirichlet problem~\eqref{eq-dirpro}.

\textbf{(Case I)} $p=2$. By Proposition~\ref{prop-dform}, the bilinear map $\mathcal B_g$ is continuous and coercive on the Hilbert space $W^{1,2}_0\big(U,\bwe^\ell \R^n\big)$: for all $\vp\in W^{1,2}_0\big(U,\bwe^\ell \R^n\big)$, we have
\begin{align}\label{comcaBg}\begin{aligned}
   \|\vp\|_{W^{1,2}_0(U)}^{2}&\le C(\La,U,\om) \big(\|d\mko \vp\|_{L^2(U)}^{2}+\|d^{*_{g}}\vp\|_{L^2(U)}^2\big)\\
   &\le C(\La,U,\om)\mko\mathcal B_g(\vp,\vp).
   \end{aligned}
\end{align}
Given $\beta\in W^{-1,2}\big(U,\bwe^\ell \R^n\big)$, the inequality~\eqref{al1ap2inn} implies that the linear functional $\vp\mapsto -(\beta,\varphi)_g$ is bounded on $W^{1,2}_0\big(U,\bwe^\ell \R^n\big)$.
Then by the Lax–Milgram theorem (see e.g. \cite[Sec.~6.2.1]{evans}), there exists a unique $\alpha\in W^{1,2}_0\big(U,\bwe^\ell \R^n\big)$ such that
\begin{align}\label{mcaalvp=be}
\mathcal B_g(\alpha,\varphi)=-(\beta,\varphi)_g,\qquad\text{for all } \vp\in W^{1,2}_0\big(U,\bwe^\ell \R^n\big).
\end{align}
By~\eqref{delaa=Baa}, this is exactly $\Delta_g\mko \alpha=\beta$ in $W^{-1,2}\big(U,\bwe^\ell\R^n\big)$. Moreover, combining~\eqref{al1ap2inn}--\eqref{mcaalvp=be}, we obtain 
\[
    \|\al\|_{W^{1,2}_0(U)}^2\le C(\La,U,\om) \mko\mathcal B_g(\alpha,\al)\le C(\La,U,\om) \|\beta\|_{W^{-1,2}(U)} \|\al\|_{W^{1,2}_0(U)}.
\]
The estimate~\eqref{dibdform} is established for $p=2$.

\textbf{(Case II)} $p\in(2,n)$. Let $\beta\in W^{-1,p}\big(U,\bwe^\ell \R^n\big)\subset W^{-1,2}\big(U,\bwe^\ell \R^n\big)$. By Case I, there exists a unique $\al\in W^{1,2}_0\big(U,\bwe^\ell \R^n\big)$ solving $\lap_g\mko \al=\beta$ in $U$. Moreover, we have 
\begin{align}\label{alW12betaW-1p}
     \|\al\|_{W^{1,2}_0(U)}\le C(\La,U,\om) \|\beta\|_{W^{-1,2}(U)}\le C(\La,U,\om) \|\beta\|_{W^{-1,p}(U)}.
\end{align}
Applying Lemma~\ref{lm-apriosys} to the operator $\lap_g$ and using~\eqref{alW12betaW-1p}, we obtain $\al\in W^{1,p}_0\big(U,\bwe^\ell \R^n\big)$ with
\begin{align}\label{dibform>2}
\begin{aligned}
    \|\al\|_{W^{1,p}_0(U)}&\le C(\La,U,p,\om)\big(\|\al\|_{L^1(U)}+\|\Delta_{g} \mko \al\|_{W^{-1,p}(U)}\big)\\
    &\le C(\La,U,p,\om) \big(\|\al\|_{W^{1,2}_0(U)}+\|\beta\|_{W^{-1,p}(U)} \big)\\
    &\le C(\La,U,p,\om)\|\beta\|_{W^{-1,p}(U)}.
    \end{aligned}
\end{align}

\textbf{(Case III)} $p\in (\frac n{n-1},2)$. Let $\beta\in C^\nf\big(\overline{U},\bwe^\ell \R^n\big)$, and take $\al$ to be the unique solution in $W^{1,2}_0\big(U,\bwe^\ell \R^n\big)$ of $\lap_g\mko \al=\beta$ in $U$. Once~\eqref{dibdform} is proved, existence in the general case follows by density.

We argue by duality. Let $\tau\in C^{\nf}\big(\overline{U}, \bwe^{\ell+1}\R^n\big)$. Since $d^{*_g}\tau$ is an $\ell$-form in $W^{-1,p'}(U)$, Case II yields a unique $\vp\in W^{1,p'}_0\big(U,\bwe^\ell \R^n\big)$ solving $\lap_g\mko \vp=d^{*_g}\tau$ in $U$. Moreover, by~\eqref{*estlornd} and~\eqref{dibform>2}, we have
\begin{align}
\label{vpestW1p'}\begin{aligned}
    \|\vp\|_{W^{1,p'}_0(U)}&\le C(\La,U,p,\om) \|d^{*_g}\tau\|_{W^{-1,p'}(U)}\\
    &\le C(\La,U,p,\om) \|\tau\|_{L^{p'}(U)}.
    \end{aligned}
\end{align}
Using integration by parts as in~\eqref{d^*adjoi}, we obtain
\begin{align}\label{daltau=bevp}\begin{aligned}
    &(d\al,\tau)_g\\&=(\al, d^{*_g}\tau)_g =(\al , \lap_g \vp)_g
    =-\mathcal B_g(\alpha,\vp)=(\lap_g \al , \vp)_g= (\beta,\vp)_g.
    \end{aligned}
\end{align}
Estimating $(\beta,\vp)_g$ via~\eqref{al1ap2inn} and~\eqref{vpestW1p'} then gives
\begin{align*}
     |(d\al,\tau)_g|
     &\le |(\beta,\vp)_g|\\
    &\le C(\La,U,p,\om) \|\beta\|_{W^{-1,p}(U)} \|\vp\|_{W^{1,p'}_0(U)}\\
     &\le C(\La,U,p,\om)\|\beta\|_{W^{-1,p}(U)}  \|\tau\|_{L^{p'}(U)}.
\end{align*}
Since $\tau$ is arbitrarily chosen in $C^{\nf}\big(\overline{U}, \bwe^{\ell+1}\R^n\big)$, it follows that
\begin{align}\label{dalL^pest}
    \|d\al\|_{L^p(U)}\le  C(\La,U,p,\om)\|\beta\|_{W^{-1,p}(U)}.
\end{align}
A similar argument yields \begin{equation}\label{d*alL^pest}
    \|d^{*_g}\al\|_{L^p(U)}\le  C(\La,U,p,\om)\|\beta\|_{W^{-1,p}(U)}.
\end{equation} 
The estimate~\eqref{dibdform} then follows from combining~\eqref{dalL^pest}--\eqref{d*alL^pest} with Proposition~\ref{prop-dform}. For uniqueness, suppose $\al\in W^{1,p}_{0}\big(U,\bwe^\ell \R^n\big)$ satisfy $\lap_g \mko \al=0$ in $\mca D'\big(U,\bwe^\ell \R^n\big)$. Then for all $\tau\in C^{\nf}\big(\overline{U}, \bwe^{\ell+1}\R^n\big)$, the equation~\eqref{daltau=bevp} still holds and implies $(d\al,\tau)_g=0$. Hence $d\al=0$. Similarly, we have $d^{*_g}\al=0$. Proposition~\ref{prop-dform} then implies $\al=0$. The proof is complete.
\end{proof}
\begin{Prop}\label{prop-W2pdir}
    Let $n,p,U,g$ be as in Theorem~\ref{th:dirdelsol}. Assume $\beta\in L^p\big(U, \bwe^\ell \R^n\big)$ (hence in $W^{-1,p}\big(U,\bwe^\ell \R^n\big)$), and let $\al$ be the unique solution of~\eqref{eq-dirpro}. Then for any subdomain $U'\Subset U$, we have $d\mko \al, d^{*_g}\al\in W^{1,p}(U')$, with the estimate
\begin{align}\label{estdald*alW1p}
    \|d\mko \al\|_{W^{1,p}(U')}+\|d^{*_g} \al\|_{W^{1,p}(U')}\le C(\La,U,U',p,\om)\mko \|\beta\|_{L^p(U)}.
\end{align}
\end{Prop}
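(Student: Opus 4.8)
The plan is to use that the Hodge--Laplacian commutes with $d$ and with $d^{*_g}$ to reduce the statement to an interior $W^{1,p}$ estimate for $\Delta_g$ with right-hand side in $W^{-1,p}$, and then to prove that interior estimate by a localization and bootstrap argument resting on Lemma~\ref{lm-apriosys} and Theorem~\ref{th:dirdelsol}. The first step is to observe, from $\Delta_g\mko\al=\beta$, the distributional identities
\begin{align*}
    \Delta_g(d\mko \al)=d\mko(\Delta_g\mko\al)=d\beta,\qquad \Delta_g(d^{*_g}\al)=d^{*_g}(\Delta_g\mko\al)=d^{*_g}\beta ,
\end{align*}
valid because $d^2=0$ and $(d^{*_g})^2=0$ (the latter since $*_g*_g=\pm\,\mathrm{Id}$ a.e.), all intermediate expressions such as $d^{*_g}d\mko\al\in W^{-1,p}(U)$ being meaningful by~\eqref{*estlornd}. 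As $\beta\in L^p(U)$, both $d\beta$ and, thanks to~\eqref{*estlornd}, $d^{*_g}\beta$ belong to $W^{-1,p}(U)$ with norms $\le C(\La,U,p,\om)\,\|\beta\|_{L^p(U)}$; moreover~\eqref{dalL^pest}--\eqref{d*alL^pest} give $d\mko\al,\, d^{*_g}\al\in L^p(U)$ with comparable bounds. Thus it suffices to prove the following interior statement: if $u\in L^p(U,\bwenol^k\R^n)$ satisfies $\Delta_g\mko u=F$ with $F\in W^{-1,p}(U,\bwenol^k\R^n)$, then $u\in W^{1,p}_{\loc}(U)$ and $\|u\|_{W^{1,p}(U')}\le C(\La,U,U',p,\om)\big(\|u\|_{L^p(U)}+\|F\|_{W^{-1,p}(U)}\big)$ for every $U'\Subset U$; we apply it to $(u,F)=(d\mko\al,d\beta)$ and to $(u,F)=(d^{*_g}\al,d^{*_g}\beta)$.

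For the interior statement I would argue in two stages, of which the first is a preliminary gain of regularity. Applying Lemma~\ref{lm-apriosys} to $\al$ itself, with $\beta\in L^p\hookrightarrow W^{-1,r}(U)$ for every $r<\min\!\big(n,\tfrac{np}{n-p}\big)$, upgrades $\al\in W^{1,p}_0(U)$ to $\al\in W^{1,r}_0(U)$; feeding the improved integrability of $\nabla\al$ into the representation~\eqref{lap-rep} --- whose lower-order coefficients lie in $L^n$, resp.\ $L^{n/2}$, with norm $\le\om(\cdot)$ on small balls --- and invoking interior elliptic regularity for the resulting VMO system (in the spirit of Lemma~\ref{lm:ellcacciolp} and of \cite[Thm.~4.1]{laMan20}, the principal part of $\Delta_g$ being diagonal, with the lower-order terms absorbed via their smallness on small balls as in the proof of Lemma~\ref{lm-apriosys}), one gets $d\mko\al,\, d^{*_g}\al\in W^{1,\rho}_{\loc}(U)$ for some exponent $\rho\in\big(\tfrac{n}{n-1},\,p\big]$ (with $\rho=p$ already if $p<n/2$), and with norms estimated by $\|\beta\|_{L^p(U)}$.

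The second stage is the bootstrap. Fix cutoffs $\zeta$ with $\zeta\equiv1$ on $U'$ and $\supp\zeta$ inside a ball $B\Subset U$, and apply $\Delta_g$ to $\zeta\, d\mko\al$. Since $d\mko\al\in W^{1,\rho}_{\loc}$, the commutator $[\Delta_g,\zeta]\,d\mko\al$ is now well defined and, by $W^{1,\rho}\hookrightarrow L^{\rho^*}$ with $\rho^*=\tfrac{n\rho}{n-\rho}$ together with~\eqref{*estlornd}, lies in $W^{-1,\rho^*}(B)$; hence $\Delta_g(\zeta\, d\mko\al)=\zeta\, d\beta+[\Delta_g,\zeta]\,d\mko\al\in W^{-1,\min(p,\rho^*)}(B)$. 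Lemma~\ref{lm-apriosys} applied to $\Delta_g$ on $B$, with $\zeta\, d\mko\al\in W^{1,\rho}_0(B)$, then improves $\zeta\, d\mko\al$ to $W^{1,\min(p,\rho^*)}_0(B)$; as $\rho^*>\rho$, finitely many such steps reach $W^{1,p}_0(B)$, whence $d\mko\al\in W^{1,p}(U')$. The same argument with $d^{*_g}$ in place of $d$ handles $d^{*_g}\al$, and tracking the constants through the finitely many steps yields~\eqref{estdald*alW1p}.

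I expect the main obstacle to be Stage one --- obtaining \emph{any} interior derivative bound on $d\mko\al$ and $d^{*_g}\al$. The a priori estimates available to us (Lemma~\ref{lm-apriosys}, Theorem~\ref{th:dirdelsol}) are tied to the Dirichlet problem, so a genuinely interior argument is needed, and the lower-order part of $\Delta_g$ --- whose coefficients are merely in $L^n$ and $L^{n/2}$ and hence a priori discontinuous --- obstructs a naive localization, because the commutator it generates requires derivative control that is not in hand until the bootstrap has begun. Arranging the preliminary integrability gain on $\al$ and the interior second-order step so that they feed one another, while staying inside the exponent window $\big(\tfrac{n}{n-1},n\big)$ required by Lemma~\ref{lm-apriosys}, is where the real work lies.
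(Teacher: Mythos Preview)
Your overall architecture --- commute $\Delta_g$ with $d$ and $d^{*_g}$, then apply a $W^{1,p}$ Dirichlet estimate --- is exactly what the paper does, and your Stage~2 bootstrap is sound once it is launched. The gap is precisely where you flag it, in Stage~1. The tools you cite (Lemma~\ref{lm:ellcacciolp}, \cite[Thm.~4.1]{laMan20}, the contraction scheme of Lemma~\ref{lm-apriosys}) deliver only \emph{first}-order interior regularity for divergence systems; none of them produces the second-order information $d\alpha\in W^{1,\rho}_{\loc}$ that your bootstrap needs to start. The obstruction is structural: for $\ell\ge 1$ the representation~\eqref{lap-rep} of $\Delta_g$ contains the divergence term $\partial_i(b^{\,iJ}_I\alpha_J)$ with $b\sim\nabla g\in L^n$, which packages the Weitzenb\"ock curvature; moving it to the right-hand side as an $L^\rho$ source would require differentiating $b$, hence control of $\nabla^2 g$, which is not assumed. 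Working directly at the level of $u=d\alpha\in L^p$ is no better: the first-order coefficient $c^{\,iJ}_I\partial_i u_J$ in~\eqref{lap-rep} is not even defined for $u$ merely in $L^p$, so neither Lemma~\ref{lm-apriosys} nor the very-weak-solution theory of \cite{laMan20} applies to the system $\Delta_g u=d\beta$ as written.

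The paper dissolves this difficulty by \emph{smooth approximation of the metric}. One takes $g_k\to g$ in $W^{1,n}$ with each $g_k\in C^\infty(\overline U)$ satisfying~\eqref{w1nassg} uniformly, and $\beta_k\to\beta$ in $L^p$; let $\alpha_k\in W^{1,p}_0(U)$ solve $\Delta_{g_k}\alpha_k=\beta_k$. Classical interior regularity gives $\alpha_k\in C^\infty(U)$, so for any cutoff $\zeta\in C^\infty_c(U)$ with $\zeta\equiv 1$ on $U'$ one has $d(\zeta\alpha_k)\in W^{1,p}_0(U)$ automatically, and the commutation $\Delta_{g_k}d(\zeta\alpha_k)=d\,\Delta_{g_k}(\zeta\alpha_k)$ is classical. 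The commutator $[\Delta_{g_k},\zeta]\alpha_k$ is pointwise bounded by $C(|\nabla\alpha_k|+|\nabla g_k|\,|\alpha_k|)$, hence controlled in $L^p$ by $\|\alpha_k\|_{W^{1,p}_0}\le C\|\beta_k\|_{W^{-1,p}}$; this yields $\|\Delta_{g_k}(\zeta\alpha_k)\|_{L^p}\le C\|\beta_k\|_{L^p}$, so $d\,\Delta_{g_k}(\zeta\alpha_k)\in W^{-1,p}$ with the same bound. Now Theorem~\ref{th:dirdelsol} applied to $d(\zeta\alpha_k)\in W^{1,p}_0(U)$ gives $\|d(\zeta\alpha_k)\|_{W^{1,p}_0}\le C\|\beta_k\|_{L^p}$ uniformly in $k$; likewise for $d^{*_{g_k}}(\zeta\alpha_k)$. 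One passes to the limit via $\alpha_k\rightharpoonup\alpha$ in $W^{1,p}_0$. There is no bootstrap and no interior second-order theory for rough coefficients: the smoothness of $g_k$ supplies the missing qualitative regularity for free, and the estimate --- being uniform in $k$ through the fixed constants of Theorem~\ref{th:dirdelsol} --- survives the limit.
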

\begin{proof}
 We begin with a smooth approximation of the metric. 
 There exist a function $\widetilde \omega\colon (0,\infty)\to [0,\infty)$ depending only on $U$, $\omega$, $\La$ with $\lim_{r\to 0} \widetilde \omega(r)=0$, and a sequence $\{g_k\}_{k=1}^\nf \subset C^\nf(\overline{U},\R^{n\times n}_{\sym})$ such that
  \begin{align}\label{w1nassgk}
    \begin{dcases}
    g_k\to g \qquad &\text{in }W^{1,n}(U,\R^{n\times n}_{\sym}),\\
     \Lambda^{-1}\mkt  |\xi|^{2}\le g_{k,ij}(x)\mkern2mu \xi^{i}\xi^{j}\le \Lambda\mkt  |\xi|^{2},
        \qquad&\text{for all } x\in U\text{ and } \xi\in\R^{n}, \\
         \|\g g_k\|_{L^n(B_r\cap U)}\le \wti\om(r), &\text{for every ball }\mkt B_r\subset \R^n.
    \end{dcases}
\end{align}
Next, we choose $\{\beta_k\}_{k=1}^\nf\subset C^\nf \big(\overline{U},\bwe^\ell \R^n\big)$ with $\beta_k\to \beta$ in $L^p(U)$, and let $\al_k\in W_0^{1,p}\big(U,\bwe^\ell \R^n\big)$ solve \begin{align}\label{eq-dirprok}
        \Delta_{g_k}  \alpha_k = \beta_k \qquad \text{in }U.
\end{align} 
By Theorem~\ref{th:dirdelsol} we have \begin{equation}
    \label{dibdformk}
    \|\al_k\|_{W^{1,p}_0(U)} \le C(\La,U,p,\om) \|\beta_k\|_{W^{-1,p}(U)}.
\end{equation}
By classical elliptic regularity (see e.g. \cite[Sec.~6.3.1]{evans}), $\al_k\in C^\nf(U)$. We fix $\zeta\in C^\nf_c (U)$ with $\zeta\equiv 1$ on $U'$. Arguing as in~\eqref{gxu<Lxu} and~\eqref{lap-rep}, we obtain
\begin{align*}
    |\Delta_{g_k} (\zeta \mko\alpha_k)-\zeta \mko\lap_{g_k} \al_k |\le C(\La,U,U')\big(|\g \al_k|+|\g g_k|\mko |\al_k|\big)\qquad \text{in }U.
\end{align*}
Applying Hölder's inequality, Sobolev embeddings, and \eqref{dibdformk} yields
\begin{align}\label{lapzealkLp}\begin{aligned}
    & \|\Delta_{g_k} (\zeta \mko\alpha_k)\|_{L^p(U)}\\
    &\le C(\La,U,U',p) \big(\|\beta_k\|_{L^p(U)}+\|\g \al_k\|_{L^p(U)}+\|\g g_k\|_{L^n(U)}\|\al_k\|_{L^{\frac{np}{n-p}}}\big)\\
    &\le C(\La,U,U',p,\om)\big(\|\beta_k\|_{L^p(U)}+\|\beta_k\|_{W^{-1,p}(U)}\big)\\
    &\le C(\La,U,U',p,\om) \|\beta_k\|_{L^p(U)}.
    \end{aligned}
\end{align}
Now from $d^2=0$ and the definition of $\lap_{g_k}$ we deduce \begin{align}\label{lapdcomm}\begin{aligned}
    \lap_{g_k}d\mko(\zeta \mko\alpha_k)&=-d\mko d^{*_{g_k}}\mko d(\zeta \mko\alpha_k) \\
    &=d(-d^{*_{g_k}} d-d\mko d^{*_{g_k}})(\zeta \mko\alpha_k) =d\mko \lap_{g_k}(\zeta \mko\alpha_k).
    \end{aligned}
\end{align}
Since $d(\zeta\al_k) \in W^{1,p}_0\big(U, \bwe^{\ell+1}\R^n\big)$, applying Theorem~\ref{th:dirdelsol} with~\eqref{lapzealkLp}--\eqref{lapdcomm} gives
\begin{align}\label{dzealkW1pbd}\begin{aligned}
\|d(\zeta\mko \al_k)\|_{W^{1,p}_0(U)}&\le C(\La,U,p,\om) \|d\mko \lap_{g_k}(\zeta \mko\alpha_k)\|_{W^{-1,p}(U)}\\
&\le C(\La,U,p,\om)  \| \lap_{g_k}(\zeta \mko\alpha_k)\|_{L^p(U)}\\
&\le C(\La,U,U',p,\om) \|\beta_k\|_{L^p(U)}.
\end{aligned}
\end{align}
Using~\eqref{*estlornd} together with the commutation $d^{*_{g_k}}\lap_{g_k}=\lap_{g_k}d^{*_{g_k}}$, the same argument yields
\begin{align}\label{d*zealkW1pbd}\begin{aligned}
\|d^{*_{g_k}}(\zeta\mko \al_k)\|_{W^{1,p}_0(U)}
&  \le C(\La,U,p,\om) \|d^{*_{g_k}} \lap_{g_k}(\zeta \mko\alpha_k)\|_{W^{-1,p}(U)}\\
&\le C(\La,U,p,\om)  \| \lap_{g_k}(\zeta \mko\alpha_k)\|_{L^p(U)}\\
&\le C(\La,U,U',p,\om) \|\beta_k\|_{L^p(U)}.
\end{aligned}
\end{align}
By \eqref{dibdformk}, the sequence $\{\al_k\}$ is bounded in $W^{1,p}_0\big(U,\bwe^\ell \R^n\big)$.
Hence there exist a subsequence (still denoted by $\{\al_k \}$) and $\wti \al\in W^{1,p}_0\big(U,\bwe^\ell \R^n\big)$ such that $\al_k\rightharpoonup \wti\al$ in $W^{1,p}_0\big(U,\bwe^\ell \R^n\big)$.
Using \eqref{w1nassgk}, \eqref{eq-dirprok}, and $\beta_k\to \beta$ in $L^p\big(U,\bwe^\ell \R^n\big)$, we pass to the limit and obtain $\lap_g \mko\wti \al =\beta$ in $\mca D'\big(U,\bwe^\ell \R^n\big)$. By uniqueness of solution for~\eqref{eq-dirpro}, $\al=\wti \al$. 
Combining~\eqref{dzealkW1pbd}--\eqref{d*zealkW1pbd} with the distributional convergences $d(\zeta\mko\al_k)\to d(\zeta\mko\al)$ and $d^{*_{g_k}}(\zeta\mko\al_k)\to d^{*_{g}}(\zeta\mko\al)$, we finally conclude $d\al,d^{*_g}\al\in W^{1,p}_{\loc}(U)$ with the estimate~\eqref{estdald*alW1p}.
\end{proof}
\begin{Dfi}\label{dfi-star-like}
    Let $U\subset \R^n$ be an open set. We call $U$ \textit{starlike} if there exists a non-empty ball $B$ such that for all $x\in U$, the convex hull of $\{x\}\cup B$ is contained in $U$.
\end{Dfi}
\begin{Co}\label{co-Hod-decw-1p}
Let $n\ge 3$, $p\in(\frac{n}{n-1},n)$, $q\in [1,\nf]$, and let $U\subset \R^n$ be a bounded starlike domain with $C^1$ boundary. Suppose $g=(g_{ij})\in L^\infty\cap W^{1,n}(U,\R^{n\times n}_{\sym})$ satisfies \eqref{w1nassg}. Let $\ga\in W^{-2,(p,q)}\big(U, \bwe^{\ell+1}{\R^n}\big)$ with $0\le \ell \le n$. Assume $d \ga=0$ in $\mca D'\big(U,\bwe^{\ell+2}\R^n\big)$. Then there exists $\si\in W^{-1,(p,q)}\big(U, \bwe^{\ell}\R^n\big)$ such that\footnote{Here we write $d*_g\si=0$ instead of $d^{*_g}\si=0$ to avoid applying $*_g$ to $W^{-2,(p,q)}$-forms, since $*_g$ has no canonical continuous extension to $W^{-2,(p,q)}$ for uniformly elliptic $g\in W^{1,n}$. See Remark~\ref{rm*W-22nf}.} 
\[
            d\mko \si=\ga\qquad \text{and}\qquad              d*_g\si=0,
         \qquad \text{in }U,
\]
with the estimate 
\[
    \|\si\|_{W^{-1,(p,q)}(U)}\le C(\La,U,p,\om)\mko \|\ga\|_{W^{-2,(p,q)}(U)}.
\]
\end{Co}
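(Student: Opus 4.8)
The plan is to reduce the assertion to the Hodge--Laplace Dirichlet problem of Theorem~\ref{th:dirdelsol} by means of the weak Poincar\'e lemma, along the lines of the scheme sketched around \eqref{exaintdd*}--\eqref{eq:hogdec}: after writing $\ga=d\beta$ and solving $\Delta_g\al=\beta$, the form $\si:=-d^{*_g}d\al$ will be the required solution.

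\emph{Step 1: Lorentz upgrades by interpolation.} Theorem~\ref{th:dirdelsol} furnishes, for every exponent $\tilde p\in(\tfrac{n}{n-1},n)$, a bounded solution operator $\Delta_g^{-1}\colon W^{-1,\tilde p}(U,\bwe^\ell\R^n)\to W^{1,\tilde p}_0(U,\bwe^\ell\R^n)$; these are consistent across exponents by the uniqueness statement. Fixing $p_0<p<p_1$ in $(\tfrac{n}{n-1},n)$ and applying real interpolation via \eqref{dosobint2}--\eqref{dosobint3} produces a bounded operator $\Delta_g^{-1}\colon W^{-1,(p,q)}(U,\bwe^\ell\R^n)\to W^{1,(p,q)}_0(U,\bwe^\ell\R^n)$ with norm $\le C(\La,U,p,\om)$; since $U$ is bounded, any $\beta\in W^{-1,(p,q)}(U)$ already lies in $W^{-1,p_0}(U)$, so the interpolated operator does solve $\Delta_g\al=\beta$. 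The same interpolation, applied to \eqref{prornd} and \eqref{*estlornd}--\eqref{*estlapnd}, shows that multiplication by any $a\in L^\nf\cap W^{1,n}(U)$—in particular $*_g$ and $*_g^{-1}$, whose coefficients are bounded polynomials in $g^{ij}$ and $(\det g)^{\pm1/2}$—is bounded on $W^{-1,(p,q)}(U,\bwe\R^n)$, while $*_g$ is bounded on $L^{(p,q)}$ and on $W^{1,(p,q)}$. Lastly, since $U$ is starlike, the weak Poincar\'e lemma \cite[Cor.~3.4]{Costa10} supplies, at each integer exponent, a homotopy operator $K$ with $dK+Kd=\mathrm{id}$ that is bounded $W^{-2,\tilde p}(U,\bwe^{\ell+1}\R^n)\to W^{-1,\tilde p}(U,\bwe^\ell\R^n)$; interpolating once more gives $K\colon W^{-2,(p,q)}\to W^{-1,(p,q)}$ bounded by a constant $C(U,p)$ independent of $g$.

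\emph{Step 2: construction of $\si$ and verification.} As $d\ga=0$, set $\beta:=K\ga\in W^{-1,(p,q)}(U,\bwe^\ell\R^n)$, so that $d\beta=dK\ga=\ga-Kd\ga=\ga$ and $\|\beta\|_{W^{-1,(p,q)}}\le C\|\ga\|_{W^{-2,(p,q)}}$. Let $\al\in W^{1,(p,q)}_0(U,\bwe^\ell\R^n)$ solve $\Delta_g\al=\beta$, with $\|\al\|_{W^{1,(p,q)}_0}\le C\|\beta\|_{W^{-1,(p,q)}}$. Then $d\al\in L^{(p,q)}(U,\bwe^{\ell+1}\R^n)$ and $d^{*_g}\al\in L^{(p,q)}(U,\bwe^{\ell-1}\R^n)$ (each obtained by applying $*_g$ on $L^{(p,q)}$, resp.\ $W^{1,(p,q)}$), hence $d^{*_g}d\al=\pm\,*_g\,d\,*_g(d\al)$ is a well-defined element of $W^{-1,(p,q)}(U,\bwe^\ell\R^n)$ and the relation $\beta=\Delta_g\al=-dd^{*_g}\al-d^{*_g}d\al$ holds in $W^{-1,(p,q)}$. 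Put $\si:=-d^{*_g}d\al\in W^{-1,(p,q)}(U,\bwe^\ell\R^n)$. Applying $d$ and using $d^2=0$ gives $d\si=-dd^{*_g}d\al=d\beta=\ga$ in $W^{-2,(p,q)}$. For the second equation, the definition of $d^{*_g}$ and the identity $*_g*_g=\pm\,\mathrm{id}$ yield $*_g\si=\pm\,d(*_gd\al)\in W^{-1,(p,q)}$, so that $d*_g\si=\pm\,d^2(*_gd\al)=0$ in $W^{-2,(p,q)}$. Finally, chaining the estimates,
\[
\|\si\|_{W^{-1,(p,q)}(U)}\le C\,\|d\al\|_{L^{(p,q)}(U)}\le C\,\|\al\|_{W^{1,(p,q)}_0(U)}\le C\,\|\beta\|_{W^{-1,(p,q)}(U)}\le C\,\|\ga\|_{W^{-2,(p,q)}(U)},
\]
with $C=C(\La,U,p,\om)$ throughout.

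\emph{Main obstacle.} Once Theorem~\ref{th:dirdelsol} is available the scheme is essentially formal; the genuine work is regularity bookkeeping, ensuring that each occurrence of $*_g$, $d$, $d^{*_g}$ and of multiplication by the coefficients of $*_g$ is performed at an exponent where it is a bounded operation. The decisive observation is that $\si$, although only of class $W^{-1,(p,q)}$—so that $d^{*_g}\si$, which would require applying $*_g$ to the $W^{-2,(p,q)}$ form $d*_g\si$, is \emph{not} canonically defined, cf.\ the footnote to the statement—is nonetheless manufactured out of $d\al\in L^{(p,q)}$, on which $*_g$ and $d^{*_g}$ act harmlessly, and the weaker conclusion $d*_g\si=0$ is a bona fide identity in $W^{-2,(p,q)}$. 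The remaining, secondary task is to check that the homotopy operator of \cite{Costa10} and the metric-dependent bounds \eqref{prornd}, \eqref{*estlornd}--\eqref{*estlapnd} really do interpolate to the Lorentz scale with constants of the stated shape; this is precisely where Lemma~\ref{lm-interpo} and the $L^\nf\cap W^{1,n}$ regularity of the coefficients of $*_g$ and $*_g^{-1}$ come in.
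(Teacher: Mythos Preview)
Your proposal is correct and follows essentially the same route as the paper: apply the weak Poincar\'e lemma to write $\ga=d\beta$ with $\beta\in W^{-1,(p,q)}$, solve $\Delta_g\al=\beta$ via Theorem~\ref{th:dirdelsol} (upgraded to Lorentz exponents by Lemma~\ref{lm-interpo}), and set $\si:=-d^{*_g}d\al$. The paper's verification that $d*_g\si=0$ uses the identity $d*_gd^{*_g}=0$ directly, while you instead observe $*_g\si=\pm\,d(*_gd\al)$; these amount to the same sign computation, and your discussion of the regularity bookkeeping (why $d*_g\si=0$ is meaningful even though $d^{*_g}\si$ is not) matches the paper's footnote precisely.
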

\begin{proof}
   By the weak Poincar\'e lemma on starlike domains \cite[Cor.~3.4]{Costa10} and the Sobolev--Lorentz interpolation (see Lemma~\ref{lm-interpo}), there exists $\beta\in W^{-1,(p,q)}\big(U, \bwe^{\ell}{\R^n}\big)$ satisfying $d\beta=\ga$, with the estimate
    \begin{align*}
         \|\beta\|_{W^{-1,(p,q)}(U)}\le C(U,p)\mko \|\ga\|_{W^{-2,(p,q)}(U)}.
    \end{align*}
    Then applying Theorem~\ref{th:dirdelsol} and Lemma~\ref{lm-interpo}, we obtain a unique $\al\in W_0^{1,(p,q)}\big(U,\bwe^\ell \R^n\big)$ solving $\Delta_g \mko \alpha = \beta$ in $U$, with   
\begin{equation}\label{dibdformdec}\begin{aligned}
    \|\al\|_{W^{1,(p,q)}_0(U)} &\le C(\La,U,p,\om) \mko\|\beta\|_{W^{-1,(p,q)}(U)} \\
    &\le C(\La,U,p,\om)\mko\|\ga\|_{W^{-2,(p,q)}(U)}.
    \end{aligned}
\end{equation}
Now we set $\si\coloneqq-d^{*_g} d \mko\al$. Using $d*_g d^{*_g}=0$ and $d^2=0$, we obtain $d*_g \si=0$ and
\begin{align*}
    d\si=-d\mko d^{*_g} d \mko\al=d \mko\lap_g\mko \al=d\beta=\ga.
\end{align*}
Finally, combining~\eqref{*estlornd} and~\eqref{dibdformdec} with Lemma~\ref{lm-interpo} yields
\begin{align*}
      \|\si\|_{W^{-1,(p,q)}(U)}
      &\le C(\La,U,p,\om)\|d*_g d\al \|_{W^{-1,(p,q)}(U)}\\
      &\le C(\La,U,p,\om)\|d\al \|_{L^{p,q}(U)}\\
      &\le C(\La,U,p,\om)\mko \|\ga\|_{W^{-2,(p,q)}(U)}.
\end{align*}
This completes the proof.
\end{proof}
The same technique, combined with Proposition \ref{prop-W2pdir}, implies the following Corollary.
\begin{Co}\label{Co:Hod-declp}
   Let $n$, $p$, $q$, $U$, $g$ be as in Corollary \ref{co-Hod-decw-1p}. Let $\ga_1\in W^{-1,(p,q)}\big(U, \bwe^{\ell+1}{\R^n}\big)$, $\ga_2\in W^{-1,(p,q)}\big(U, \bwe^{\ell-1}{\R^n}\big)$ with $0\le \ell \le n$. Assume $d\mko \ga_1=0$ and $d*_g\ga_2=0$ in $\mca D'(U)$. Then there exists $\si\in L_{\loc}^{p,q}\big(U, \bwe^{\ell}{\R^n}\big)$ such that 
\[
         d\mko \si=\ga_1\qquad \text{and}\qquad 
             d^{*_g}\si=\ga_2,
         \qquad \text{in }U.
\]
Moreover, for any subdomain $U'\Subset U$, we have the estimate
\begin{align}\label{estsiLpq}
    \|\si\|_{L^{p,q}(U')}\le C(\La,U,U', p,\om)\big(\|\ga_1\|_{W^{-1,(p,q)}(U)}+\|\ga_2\|_{W^{-1,(p,q)}(U)}\big).
\end{align}
\end{Co}
\begin{proof}
By \cite[Cor.~3.4]{Costa10} and Lemma~\ref{lm-interpo}, we find \(\beta_1\in L^{p,q}\big(U,\bwe^\ell\R^n\big)\) with $d\mko \beta_1=\ga_1 $ and
\begin{align*}
\|\beta_1\|_{L^{p,q}(U)}\le C(U,p)\,\|\ga_1\|_{W^{-1,(p,q)}(U)}.
\end{align*}
Applying the previous argument to \(*_g\mko\ga_2\) and using~\eqref{*estlornd}, we obtain \(\beta_2\in L^{p,q}\big(U,\bwe^\ell\R^n\big)\) such that $(-1)^\ell d*_g\beta_2=*_g \,\ga_2$ (i.e. $d^{*_g}\beta_2=\ga_2$), and
\begin{align*}
    \|\beta_2\|_{L^{p,q}(U)}&\le C(\La)\mko \|*_g \beta_2\|_{L^{p,q}(U)}\\
    &\le C(\La,U,p)\mko\|*_g\ga_2\|_{W^{-1,(p,q)}(U)}\\
    &\le C(\La,U,p,\om)\mko\|\ga_2\|_{W^{-1,(p,q)}(U)}.
\end{align*}
By Theorem~\ref{th:dirdelsol}, there exists a unique \(\al_i\in W^{1,(p,q)}_0\big(U,\bwe^\ell\R^n\big)\) solving $\Delta_g\mko \al_i=\beta_i $ in $U$ ($i=1,2$).
Applying Proposition~\ref{prop-W2pdir} and Lemma~\ref{lm-interpo} to \(\al_1\) and \(\al_2\) yields, for every \(U'\Subset U\),
\begin{align}\label{eq:locW1pq-d-d*}\begin{aligned}
& \|d\mko \al_1\|_{W^{1,(p,q)}(U')}+\|d^{*_g}\al_2\|_{W^{1,(p,q)}(U')} \\
&\le C(\La,U,U',p,\om)\big(\|\beta_1\|_{L^{p,q}(U)}+\|\beta_2\|_{L^{p,q}(U)}\big)\\
&\le C(\La,U,U',p,\om)\big(\|\ga_1\|_{W^{-1,(p,q)}(U)}+\|\ga_2\|_{W^{-1,(p,q)}(U)}\big).
\end{aligned}
\end{align}
Now we define $\sigma\coloneqq-\mko d^{*_g} d\mko \al_1 - d\mko d^{*_g}\al_2$.
Using \(d^2=0\) and \((d^{*_g})^2=0\), we have in \(W^{-1,p}_{\loc}\big(U, \bwe^{\ell+1}\R^n\big)\),
\begin{align*}
d\mko \sigma
=-\,d\mko d^{*_g} d\mko \al_1
=d (\Delta_g\mko \al_1)
=d\mko \beta_1
=\ga_1.
\end{align*}
Similarly, we obtain in \(W^{-1,p}_{\loc}\big(U, \bwe^{\ell-1}\R^n\big)\) that
\begin{align*}
d^{*_g} \sigma
=-\mko d^{*_g} d\mko d^{*_g}\al_2
=d^{*_g}(\Delta_g\mko \al_2)
=d^{*_g}\beta_2
=\ga_2.
\end{align*}
Finally, by~\eqref{*estW1pnd} and \eqref{eq:locW1pq-d-d*}, we estimate
\begin{align*}
\|\sigma\|_{L^{p,q}(U')}
&\le \|d^{*_g} d\mko \al_1\|_{L^{p,q}(U')}
+\|d\mko d^{*_g}\al_2\|_{L^{p,q}(U')} \\
&\le C(\La,U,U',p,\om)\big(\|d\mko \al_1\|_{W^{1,(p,q)}(U')}
+\|d^{*_g}\al_2\|_{W^{1,(p,q)}(U')}\big) \\
&\le C(\La,U,U',p,\om)\big(\|\ga_1\|_{W^{-1,(p,q)}(U)}+\|\ga_2\|_{W^{-1,(p,q)}(U)}\big).\qedhere
\end{align*}
\end{proof}
\begin{Rm}
    By the same argument, each result in this section remains valid if we replace the $C^1$-domain assumption by Lipschitz continuity of the domain with small local Lipschitz constant. See for instance \cite[Def.~1.2]{Byun05} and \cite[Def.~1.4]{Byun08}.
\end{Rm}
\begin{Rm}\label{rm:smW1ncoe}
    For any sequence $\{g_k\}_{k\in \N}\subset W^{1,n}(U,\R^{n\times n}_{\sym})$ satisfying the strong convergence $\lim_{k\to \nf} \|\g g_k\|_{L^{n}(U)} =0$, we have 
\[
    \sup_{\substack{k\in \N\\ B_r\subset \R^n}} \|\g g_k\|_{L^{n}(B_r\cap U)}\xrightarrow[r\to 0]{}0.
\]
Thus there exists $\vae=\vae(\La,U,p)>0$ such that, if the second assumption in~\eqref{w1nassg} is replaced by $\|\g g\|_{L^n(U)}\le \vae$, then the constant in \eqref{apriori-form} may be chosen as $C(\La,U,p)$ (independent of $\om$). The same independence from $\om$ holds for Corollaries~\ref{co-Hod-decw-1p}--\ref{Co:Hod-declp} under the assumption $\|\g g\|_{L^n(U)}\le \vae$.
\end{Rm}
\section{Some structural identities}\label{sec:strid}
 Let $\vec{\Phi}\in \mathcal{I}_{1,2}(B^4, \R^m)$ and $g=\bP^*g_{\std}\in W^{2,2}\cap L^\nf (B^4,\R^{4\times 4}_{\sym})$. We fix $\La\ge 1$ such that, for a.e. $x\in B^4$ and all $v\in T_x B^4$, it holds that
\begin{align}\label{eq:immcon4d}
    \Lambda^{-1} \mko |v|^2_{\R^4} \le | d\vec{\Phi}_x(v)|^2_{\R^m}  \le \Lambda  \mko |v|^2_{\R^4}.
\end{align} 
Throughout this section, we adopt the convention~\eqref{con-formsob}. 

By Gram-Schmidt, we find $e_1,\dots,e_4\in L^\infty\cap W^{1,4}(B^4,TB^4)$ such that for all $1\le i,j\le 4$, the following hold a.e. in $B^4$:
\begin{equation}\label{gra-sch-4d}
\langle e_i,e_j\rangle_g=\delta_{ij},\qquad
|e_i|\le C(\La),\qquad
 |\g e_i|\le C(\La)|\g g|.
\end{equation}
Similar to~\eqref{prornd}, there exists a universal constant $C>0$ such that for all $a\in L^\nf \cap W^{1,4}(B^4)$ and $T\in L^1+W^{-1,4/3}(B^4)$, it holds that
\begin{align}\label{prorL1W-143}
    \|aT\|_{L^1+W^{-1,\frac 43}(B^4)}\le C\mko\|a\|_{L^\nf \cap W^{1,4}(B^4)} \mko \|T\|_{L^1+W^{-1,\frac 43}(B^4)}.
\end{align} 
It follows that
\begin{align}\label{*gL1+W-143}\begin{aligned}
    &\| *_{g}\|_{L^1+W^{-1,\frac 43} \left( B^4,\bigwedge^\ell \R^4 \right)\to L^1+W^{-1,\frac 43} \left( B^4,\bigwedge^{4-\ell} \R^4 \right)}\le C(\La)\big(1+\|\g g\|_{L^4(B^4)}\big).
    \end{aligned}
\end{align}
Let $\{e^{i}\}_{i=1}^4\subset L^\infty\cap W^{1,4}(B^4,T^*B^4)$ be the dual coframe to $\{e_i\}_{i=1}^4$. 
For $\vec L\in W^{-1,4/3}\big(B^4, \bwe \R^m\ot \bwe^\ell T^* B^4\big)$ with $0\le \ell \le 4$, set 
\begin{equation}
\label{forcomnot}
    \vec L_{i_1\dots i_\ell}\coloneqq\vec L(e_{i_1},\dots,e_{i_\ell}).
\end{equation}
Then by \eqref{gra-sch-4d} and repeated use of \eqref{prorL1W-143}, it holds that 
\begin{align*}
    \| \vec L_{i_1\dots i_\ell}\|_{L^1+W^{-1,\frac 43}(B^4)}
    &\le \|\vec L\|_{L^1+W^{-1,\frac 43}(B^4)} \prod_{k=1}^\ell\| e_{i_k}\|_{L^\nf \cap W^{1,4}(B^4)}\\
    & \le C(\La)  \|\vec L\|_{L^1+W^{-1,\frac 43}(B^4)} \big(1+\|\g g\|_{L^4}^\ell \big).
\end{align*}
In addition, we have 
\[
     \vec L=\frac{1}{\ell! }\sum_{1\le i_1,\dots,i_\ell\le 4}\mko \vec L_{i_1\dots i_\ell}\ot (e^{i_1}\we \cdots \we e^{i_\ell}).
\]
For scalar-valued differential forms $A, \,\al,\dots$, we write $A_{i_1\dots i_\ell},\,\al_{i_1\dots i_{\ell}},\dots$ analogously.\footnote{The notation $\alpha_{i_1\cdots i_\ell}$ used here is to be distinguished from $\al_I$ (with $I$ a strictly increasing multi-index) in Section \ref{sec:hod-dec}, where we expand in the coordinate coframe $(dx^i)$. 
In what follows, $|\g \al|$ and Sobolev norms are still taken with respect to the $(dx^i)$-coframe.}
Set $\vec e_i\coloneqq d\bP(e_i)$, and we define
\begin{equation}\label{eq:defeta}
    \vec \eta\coloneqq\frac 12\mkt d\bP\overset{\ovwe} \we d\bP.
\end{equation} 
Then under the above notation, we have $(d\bP)_i=\vec e_i$, and $\vec\eta_{ij}=\vec e_i\we \vec e_j$.

 In what follows, the bilinear operators $\,\resg$ and $\bulg$ on $\bwe T^* B^4$ (as introduced in Section~\ref{sec:uselem}) are defined fibrewise with respect to the metric $g_x(dx^i,dx^j)=g^{ij}(x)$ on $T_x^*B^4\simeq \R^4$ for a.e. $x\in B^4$.
Then we follow Notation \eqref{not-overset}: for a.e. $x\in B^4$ the maps $\dres$, $\wres$, $\ovs{\sbul}{\res}_g$, $\ovs{\sbul}{\sbul}_g$, etc. are bilinear on $(\bwe\R^m\otimes\bwe T^{*}_xB^{4})\times (\bwe\R^m\otimes\bwe T^{*}_xB^{4})$; the upper operators act on the $\bwe \R^m$-factors, and the base operators $\,\resg$, $\bulg$ act on the $\bwe T^*_x B^4$-factors.

For completeness, we record the structural identities of \cite[Secs.~II.4--II.5]{Bernard25} in
our notation, providing a self-contained derivation.
\begin{Lm}[{\cite[Prop.~II.2]{Bernard25}}]
\label{ber-prop:II.2}
For $\vec L\in L^1+W^{-1,4/3}\big(B^4, \R^m \ot \bwe^2 \R^4\big)$, we define
\begin{equation*}
\begin{dcases}
        A \coloneqq \vec L\, \dres d\bP \in \mca D'(B^4,\R^4), \\
    B \coloneqq 2\mko \vec L \dwe d\bP \in \mca D'\big(B^4,\bwe^3 \R^4\big),\\
    \vec C \coloneqq \vec L \wres d\vec\Phi \in \mca D'\big(B^4, \bwe^2\R^m\ot \R^4\big),\\
    \vec D \coloneqq 2\mko\vec L \ovs{\ovwe}\we d\vec\Phi \in \mca D'\big(B^4, \bwe^2\R^m\ot \bwe^3\R^4\big).
    \end{dcases}
\end{equation*}
Then the following identity holds in $\mca D'(B^4)$:
\begin{align}
    -3\mko \vec C =\vec\eta \,\ovs{\sbul}{\res}_g \vec C+ \vec D\, \ovs{\sbul}{\res}_g\vec\eta+\vec\eta\,\resg A- B\mko \resg\vet.\label{eq:berIII.1.a}
    \end{align}
\end{Lm}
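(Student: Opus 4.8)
My first step is to reduce the statement to a pointwise algebraic identity in the fibre. The identity \eqref{eq:berIII.1.a} is $\R$-linear in $\vec L$, and for $\bP$ fixed each of its five terms is a continuous linear function of $\vec L\in L^1+W^{-1,4/3}(B^4,\R^5\ot\bwe^2\R^4)$: each of the maps $\vec L\mapsto A,\,B,\,\vec C,\,\vec D$ and then $\vec C\mapsto\vec\eta\,\ovs{\sbul}{\res}_g\vec C$, $\vec D\mapsto\vec D\,\ovs{\sbul}{\res}_g\vet$, $A\mapsto\vec\eta\,\resg A$, $B\mapsto B\,\resg\vet$ is obtained by multiplying a distribution in $L^1+W^{-1,4/3}$ by coefficients assembled from $d\bP$, the inverse metric $g^{ij}$ and $\vec\eta$ — all of which lie in $L^\nf\cap W^{1,4}(B^4)$ (for $\vec\eta$ use $|\nabla\vec\eta|\le 2\mko|d\bP|\,|\nabla d\bP|$ with $d\bP\in L^\nf$ by \eqref{eq:immcon4d} and $\nabla d\bP\in W^{1,2}\subset L^4$) — followed by the fibrewise algebraic operations $\resg,\bulg,\dwe,\ovs{\ovwe}{\we}$; hence by the product estimate \eqref{prorL1W-143} every term lies continuously in $L^1+W^{-1,4/3}$ of the appropriate bundle. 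Since smooth $\R^5\ot\bwe^2\R^4$-valued forms are dense in $L^1+W^{-1,4/3}(B^4,\R^5\ot\bwe^2\R^4)$, it suffices to prove \eqref{eq:berIII.1.a} for $\vec L$ smooth; then all five terms are honest $L^\nf\cap W^{1,4}$ sections of $\bwe^2\R^5\ot\R^4$, and the identity is equivalent to its pointwise validity at a.e.\ $x\in B^4$.

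Fix such an $x$. I would work in the $g$-orthonormal frame $(e_1,\dots,e_4)$ of \eqref{gra-sch-4d} with dual coframe $(e^1,\dots,e^4)$, and set $\vec e_i\coloneqq d\bP(e_i)$; by \eqref{eq:immcon4d} and orthonormality the $\vec e_i$ form a Euclidean orthonormal $4$-frame of $\R^5$, which I complete by the unit normal $\bn=\bn_{\bP}$ to an orthonormal basis $(\vec e_1,\dots,\vec e_4,\bn)$ of $\R^5$. Writing $\vec L=\tfrac12\sum_{i,j}\vec L_{ij}\,e^i\we e^j$ with $\vec L_{ij}=-\vec L_{ji}$, I split $\vec L_{ij}=\sum_k L^k_{ij}\,\vec e_k+\ell_{ij}\,\bn$. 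Since $\bwe^2\R^5=\bwe^2 T\oplus(T\we\bn)$ with $T=\mathrm{span}(\vec e_1,\dots,\vec e_4)$, by linearity it is enough to verify \eqref{eq:berIII.1.a} separately when $\vec L$ is tangential ($\ell_{ij}\equiv0$) and when $\vec L$ is normal ($L^k_{ij}\equiv0$).

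Then I would compute the frame components of $\vec\eta=\tfrac12\sum(\vec e_i\we\vec e_j)\,e^i\we e^j$ (from \eqref{eq:defeta}) and of $A,B,\vec C,\vec D$ against $d\bP=\sum_k\vec e_k\,e^k$: iterating the Leibniz rule \eqref{prodrule} turns the interior products of coframe forms into Kronecker-delta combinatorics, and \eqref{bul2vecs} evaluates the $\bulg$-contractions on the $\R^5$-factors via $\vec e_i\cdot\vec e_j=\delta_{ij}$, $\vec e_i\cdot\bn=0$, $\bn\cdot\bn=1$. Substituting these into the five terms of \eqref{eq:berIII.1.a} and collecting: for normal $\vec L$ one finds $A=B=0$ (as $\bn\perp T$), while $\vec\eta\,\ovs{\sbul}{\res}_g\vec C$ and $\vec D\,\ovs{\sbul}{\res}_g\vet$ each reduce to an explicit multiple of $\vec C$, the two multiples summing to $-3$; for tangential $\vec L$, $\vec\eta\,\ovs{\sbul}{\res}_g\vec C$ produces, besides a multiple of $\vec C$, an auxiliary term $\sum_{b,k}L^a_{bk}\,\vec e_b\we\vec e_k$ and a trace term in $A_k=\sum_j L^j_{kj}$; the auxiliary term cancels against the corresponding contributions of $\vec D\,\ovs{\sbul}{\res}_g\vet$ and $B\,\resg\vet$, the trace terms are absorbed by $\vec\eta\,\resg A$ and the remaining part of $\vec D\,\ovs{\sbul}{\res}_g\vet$, and after using the antisymmetry of $\vec L_{ij}$ the surviving coefficients of $\vec C$ add up to $-3$ — here the $3=4-1$ is the trace of a Kronecker delta over the four tangential indices. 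This yields \eqref{eq:berIII.1.a}, recovering \cite[Prop.~II.2]{Bernard25}.

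I expect the main obstacle to be the bookkeeping in the tangential case: tracking signs through the iterated interior products \eqref{prodrule} and the contractions \eqref{bul2vecs}, and checking the exact cancellation of the auxiliary tensor $L^a_{bk}$ and of the $A$-trace contributions spread among the three terms $\vec\eta\,\ovs{\sbul}{\res}_g\vec C$, $\vec D\,\ovs{\sbul}{\res}_g\vet$ and $B\,\resg\vet$. Organizing the computation by the splitting $\bwe^2\R^5=\bwe^2 T\oplus(T\we\bn)$ and handling the tangential and normal parts of $\vec L$ separately is what keeps this under control; the relations \eqref{*_gcommwed}--\eqref{*_gcomres} between $*_g$, the exterior product and the interior product can be used to streamline some of the contractions, but the frame computation above is already self-contained.
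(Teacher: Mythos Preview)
Your proposal is correct and follows essentially the same route as the paper: both reduce to smooth $\vec L$ via the product estimate \eqref{prorL1W-143}, pass to the $g$-orthonormal frame $(e_i)$, and verify the identity by direct index computation. The only organizational difference is that you split $\vec L_{ij}=\sum_k L^k_{ij}\,\vec e_k+\ell_{ij}\,\bn$ into tangential and normal parts and treat them separately, whereas the paper keeps $\vec L_{ij}$ generic throughout (see \eqref{repAi}--\eqref{comBreseta}). Your splitting makes the normal case clean --- indeed $A=B=0$ and one checks from \eqref{Dbulreseta} and \eqref{cometabulresC} that the remaining two terms give $\vec C_i-4\,\vec C_i=-3\,\vec C_i$ --- but in the tangential case the cancellations you describe are exactly those the paper writes out in \eqref{comDbulreseta}--\eqref{comBreseta}, so the bookkeeping is not really reduced. (A minor slip: with the paper's convention \eqref{repAi} one has $A_k=\sum_j L^j_{jk}$, not $\sum_j L^j_{kj}$.)
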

\begin{proof} 
Since $g, \bP, \vet\in L^\nf \cap W^{1,4}$, by \eqref{prorL1W-143} we have $A,B,\vec C,\vec D\in L^1+W^{-1,4/3}$, and the same inclusion holds for $\vec\eta \,\ovs{\sbul}{\res}_g \vec C,\, \vec D\,\dres\vec\eta, \,\vet\,\resg A$, etc. Hence by approximation, it suffices to prove~\eqref{eq:berIII.1.a} 
pointwise when $\vec L$ is smooth. In this case $\vec L_{ij}\in L^\nf_{\loc}(B^4)$ by \eqref{forcomnot}, since $e_i\in L^\nf(B^4,TB^4)$.
We first compute\footnote{Throughout this section, we write $$
    \sum_j=\sum_{j=1}^4\quad\text{and}\quad \sum_{j,k}=\sum_{j,k=1}^4.$$}
\begin{align}
&A_i=\sum_{j} \vec L_{ji}\cdot \vec e_j, \qquad  B_{ijk}=2\mko \big(\vec L_{ij}\cdot \vec e_k-\vec L_{ik}\cdot \vec e_j+\vec L_{jk}\cdot \vec e_i\big),\label{repAB}\\
&\vec C_i=\sum_{j} \vec L_{ji}\we \vec e_j,\qquad
\vec D_{ijk}=2\mko \big(\vec L_{ij}\we \vec e_k-\vec L_{ik}\we \vec e_j+\vec L_{jk}\we \vec e_i\big).\label{repCD}
\end{align}
The definition~\eqref{eq:defeta} of $\vet$ and the expression~\eqref{repCD} of $\vec D_{ijk}$ yield that for each $i\in \{1,\dots,4\}$, \begin{align*}
    \big(\vec D \,\ovs{\sbul}{\res}_g\vec\eta\big)_i&=\frac 12 \sum_{j,k} \vec D_{ijk} \sbul \vet_{jk}    \\
    &=\sum_{j,k} \big(\vec L_{ij}\we \vec e_k-\vec L_{ik}\we \vec e_j+\vec L_{jk}\we \vec e_i\big)\sbul (\vec e_j \we \vec e_k)  \\
    &=\sum_{j,k} \Big(2\mko \big(\vec L_{ij}\we \vec e_k\big)\sbul (\vec e_j \we \vec e_k)+\big(\vec L_{jk}\we \vec e_i\big)\sbul (\vec e_j \we \vec e_k)\Big).
\end{align*}
By~\eqref{bul2vecs}, this becomes
\begin{align}\label{expaDbuleta}\begin{aligned}
    & \big(\vec D \,\ovs{\sbul}{\res}_g\vec\eta\big)_i =2\sum_{j,k} \Big( \vec L_{ij}\we \vec e_j-\de_{k}^j\mkt \vec L_{ij}\we \vec e_k+ \big(\vec L_{ij}\cdot \vec e_k\big)\mko \vec e_j \we \vec e_k\Big ) \\
    & +\sum_{j,k} \Big(\big(\vec L_{jk}\cdot \vec e_j\big)\mko \vec e_i \we \vec e_k-\big(\vec L_{jk}\cdot \vec e_k\big)\mko \vec e_i \we \vec e_j- \de_{i}^j\mko \vec L_{jk}\we \vec e_k+ \de_{i}^k\mko \vec L_{jk}\we \vec e_j\Big). 
    \end{aligned}
\end{align}
Interchanging $j$ and $k$, and using~\eqref{repCD}, we obtain
\begin{align}\label{intchajk}
\begin{dcases}
    \sum_{j,k}\big(\vec L_{jk}\cdot \vec e_j\big)\mko \vec e_i \we \vec e_k=\sum_{j,k}-\big(\vec L_{jk}\cdot \vec e_k\big)\mko \vec e_i \we \vec e_j,\\
    \sum_{j,k}\de_{i}^k\mko \vec L_{jk}\we \vec e_j=- \sum_{j,k}\de_{i}^j\mko \vec L_{jk}\we \vec e_k=\sum_{j}\vec L_{ji}\we \vec e_j=\vec C_i.
    \end{dcases}
\end{align}
We also have 
\begin{equation}\label{sumLijweej}
    \sum_{j,k} \big( \vec L_{ij}\we \vec e_j-\de_{k}^j\mkt \vec L_{ij}\we \vec e_k\big)=4 \sum_{j}  \vec L_{ij}\we \vec e_j-\sum_{j}  \vec L_{ij}\we \vec e_j=-3\mko \vec C_i.
\end{equation}
Substituting~\eqref{intchajk}--\eqref{sumLijweej} into~\eqref{expaDbuleta} then yields
\begin{align}\label{Dbulreseta}
     \big(\vec D \,\ovs{\sbul}{\res}_g\vec\eta\big)_i=-4\mko \vec C_i+2\sum_{j,k} \Big( \big(\vec L_{ij}\cdot \vec e_k\big)\mko \vet_{jk} + \big(\vec L_{jk}\cdot \vec e_j\big)\mko \vet_{ik}\Big).
\end{align}
In addition, the expression~\eqref{repAB} of $A_k$ gives 
\begin{align}\label{etaresA}
    (\vet\, \resg A)_i=\sum_k \vet_{ki} \mkt A_k=\sum_{j,k} \big(\vec L_{jk}\cdot \vec e_j\big)\mko \vet_{ki}.
\end{align}
Combining~\eqref{Dbulreseta}--\eqref{etaresA}, then we have
\begin{align}\label{comDbulreseta}
     \big(\vec D \,\ovs{\sbul}{\res}_g\vec\eta\big)_i=-4\mko \vec C_i-2\mko  (\vet\, \resg A)_i+2\sum_{j,k}  \big(\vec L_{ij}\cdot \vec e_k\big)\mko \vet_{jk}.
\end{align}
By~\eqref{repCD} and~\eqref{etaresA} we also compute 
\begin{align}\label{cometabulresC}\begin{aligned}
    &(\vec\eta\, \ovs{\sbul}{\res}_g \vec C)_i\\
    &=\sum_j\vet_{ji}\sbul \vec C_j\\
    &=\sum_{j,k} (\vec e_j\we \vec e_i )\sbul \big( \vec L_{kj}\we \vec e_k\big)\\
    &=\sum_{j,k} \Big(\big(\vec L_{kj}\cdot \vec e_j\big)\mko \vet_{ik}-\big(\vec L_{kj}\cdot \vec e_i\big)\mko \vet_{jk}+\de_{k}^j\mkt \vec L_{kj}\we \vec e_i-\de_{i}^k\mko \vec L_{kj}\we \vec e_j\Big)\\
    &=  (\vet\, \resg A)_i+\vec C_i+\sum_{j,k} \big(\vec L_{jk}\cdot \vec e_i\big)\mko \vet_{jk}.
    \end{aligned}
\end{align}
Moreover, from \eqref{repAB} it follows that
\begin{align}\label{comBreseta}\begin{aligned}
    \big(B\mko \resg \vet\big)_i
&= \frac 12 \sum_{j,k} B_{ijk} \,\vet_{jk}\\
&=\sum_{j,k}\Big( \big(\vec L_{ij}\cdot \vec e_k\big)\mko \vet_{jk}-\big(\vec L_{ik}\cdot \vec e_j\big)\mko \vet_{jk}
+ \big(\vec L_{jk}\cdot \vec e_i\big)\mko \vet_{jk} \Big)\\
&=\sum_{j,k}\Big( 2\mko \big(\vec L_{ij}\cdot \vec e_k\big)\mko \vet_{jk}
+ \big(\vec L_{jk}\cdot \vec e_i\big)\mko \vet_{jk} \Big).
\end{aligned}
\end{align}
Combining \eqref{comDbulreseta}--\eqref{comBreseta}, for each $i\in \{1,\dots, 4\}$ we have 
\begin{align*}
    \big(\vec\eta\, \ovs{\sbul}{\res}_g \vec C+ \vec D \,\ovs{\sbul}{\res}_g\vec\eta+\vec\eta\,\resg A- B\mko \resg\vet\big)_i=-3\mko \vec{C}_i.
\end{align*}
This completes the proof.
\end{proof}    
We define the codifferential $d^{*_g}$ as in~\eqref{defd*glap}. In particular, in dimension $4$ one has, for all differential forms,
\begin{equation}\label{defd*g4d}
    d^{*_g}\coloneqq-*_g d*_g.
\end{equation}
\begin{Lm}[cf.\ {\cite[Prop.~II.3]{Bernard25}}]\label{prop:III2ber}
Suppose $\al\in L^\nf\cap W^{1,4}\big(B^4, \bwe^2\R^4\big)$ and $\beta\in L^{4/3}\big(B^4,\bwe^2 \R^4\big)$. Then the following holds in $\mca D'(B^4,\R^4)$:\footnote{We write $d\beta \resg \al=(d\beta)\resg \al.$}
\begin{align}\label{eq:III2-scalar}
  \al\, \resg d^{*_{g}} \beta + d\beta\, \resg \al
  =d^{*_{g}}(\al \bulg \beta) + d\mko ( \al\,\res_g\beta) +\mca R_1[\al,\beta;g],
\end{align}
where the remainder satisfies
\begin{align}\label{eq:III2-bd}
    \big|\mca R_1 [\al,\beta;g]\big|\le C(\La)\big(|\g \al|\mko|\beta|+|\al| \mko|\beta|\mko|\nabla g|\big)\qquad \text{a.e. in }B^4.
\end{align}
Similarly, let $\vec\al\in L^\infty\cap W^{1,4}\big(B^4,\bwe^2\R^m\ot\bwe^2\R^4\big)$, $\beta\in L^{4/3}\big(B^4,\bwe^2 \R^4\big)$, and 
$\vec\beta\in L^{4/3}\big(B^4,\bwe^2\R^m\ot\bwe^2\R^4\big)$. Then we have\footnote{Our $\bulg$ coincides with $\bul$ in~\cite{Bernard25}, and our codifferential $d^{*_{g}}$ is the negative of $d^{\star}$ or $d^{\star_g}$ in the same reference.}
\begin{align}
  &\vec\al\,\resg d^{*_{g}}\beta + d\beta\,\resg \vec\al
  = d^{*_{g}}\big(\vec\al \bulg \beta\big) + d\big( \vec\al\,\resg \beta\big)
    +  \vec{\mca R}_1 [\vec\al,\beta;g],\label{eq:III2-vector1}\\
       &\vec\al\,\ovs{\sbul}{\res}_g d^{*_{g}}\vec\beta - d\vec\beta\,\ovs{\sbul}{\res}_g \vec\al
  = d^{*_{g}}\big(\vec\al\,\ovs{\sbul}{\sbul}_g \vec\beta\big) + d\big( \vec\al\,\ovs{\sbul}{\res}_g\vec\beta\big)
    +  \vec{\mca R}_2 \mko [\vec\al,\vec\beta;g],\label{eq:III2-vector2}
    \end{align}
with
\begin{equation}\label{eq:III2-bdvec}\begin{dcases}
\big|\mca {\vec R}_1 [\vec \al,\beta;g]\big|\le C(\La)\big(|\g \vec \al|\mko|\beta|+|\vec \al| \mko|\beta|\mko|\nabla g|\big),\\
 \big|\vec{\mca R}_2 \mko[\vec \al,\vec \beta; g]\big|
 \le C(\La)\big(|\nabla \vec\al|\mko |\vec\beta|+|\nabla g|\mko |\vec\al|\mko |\vec\beta|\big),
 \end{dcases}
 \qquad\text{a.e.\ in }B^4.
\end{equation}
\end{Lm}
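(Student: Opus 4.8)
The plan is to prove the scalar identity~\eqref{eq:III2-scalar} together with the estimate~\eqref{eq:III2-bd} in detail, obtain~\eqref{eq:III2-vector1} as its componentwise consequence, and treat~\eqref{eq:III2-vector2} by a parallel computation in which the $\bwe^2\R^5$-contractions are handled alongside the $\bwe T^*B^4$-ones. First I would reduce to the case of a smooth metric and smooth forms: by~\eqref{prorL1W-143} (and its $W^{1,4}$ and $L^{4/3}$--$L^4$ analogues) every term appearing on both sides of~\eqref{eq:III2-scalar}--\eqref{eq:III2-vector2} depends continuously on $(\al,\beta,g)$ in the $W^{1,4}\times L^{4/3}\times W^{1,4}$ topology, and once the pointwise bounds are known the remainders --- defined as ``left side minus the displayed right-hand terms'' --- belong to $L^1(B^4)$. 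Hence it suffices to establish the identities and the pointwise estimates for smooth $\al,\beta$ and a smooth metric $g$ (approximating $g$ as in the proof of Proposition~\ref{prop-W2pdir} and $\al,\beta$ by mollification), and then pass to a subsequence converging a.e., along which each side converges in $\mca D'(B^4)$, the error bounds converge in $L^1(B^4)$, and the pointwise estimates pass to the limit.

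In the smooth setting I would compute at a fixed point in the $g$-orthonormal coframe $\{e^i\}$ dual to the Gram--Schmidt frame of~\eqref{gra-sch-4d}, writing $\al=\frac12\al_{ij}\,e^i\we e^j$ and $\beta=\frac12\beta_{ij}\,e^i\we e^j$, so that $|\al_{ij}|\le C(\La)|\al|$ and $|\beta_{ij}|\le C(\La)|\beta|$. Because this coframe is $g$-orthonormal, $*_g$, $\resg$ and $\bulg$ act on the coefficient arrays exactly as the Euclidean $\star$, $\res$, $\sbul$, introducing \emph{no} derivative of $g$; the sole source of metric derivatives is $de^i$, a $2$-form satisfying $|de^i|\le C(\La)|\nabla g|$ by~\eqref{gra-sch-4d}. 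Expanding $d$ and $d^{*_g}=-*_gd*_g$ by the Leibniz rule $d(\mu\we\nu)=d\mu\we\nu+(-1)^{|\mu|}\mu\we d\nu$, each of $\al\resg d^{*_g}\beta$, $d\beta\resg\al$, $d^{*_g}(\al\bulg\beta)$ and $d(\al\resg\beta)$ decomposes as a ``flat part'' --- built only from the $e_k$-derivatives of the coefficient arrays and the Euclidean algebraic contractions --- plus error terms each carrying a factor $de^i$ and hence bounded by $C(\La)\,|\al|\,|\beta|\,|\nabla g|$.

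The heart of the matter --- and the step I expect to be the main obstacle --- is the purely algebraic identity for the coefficient arrays, regarded as Euclidean $2$-forms on $\R^4$:
\[
   \al\res d^{\star}\beta+d\beta\res\al = d^{\star}(\al\sbul\beta)+d(\al\res\beta)+\mca R_1^{\mathrm{flat}},
\]
whose content is that in the difference of the two sides all terms in which a derivative lands on $\beta$ cancel, leaving $\mca R_1^{\mathrm{flat}}$ a finite sum of terms each containing a factor $\nabla\al_{ij}$, so that $|\mca R_1^{\mathrm{flat}}|\le C\,|\nabla\al_{ij}|\,|\beta_{ij}|\le C(\La)\big(|\nabla\al|+|\al|\,|\nabla g|\big)|\beta|$. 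This cancellation is verified by a direct expansion using the algebraic rules~\eqref{*_gcommwed}, \eqref{prodrule}, \eqref{*_gcomres}, \eqref{bul2vecs} and the Leibniz rules for $d$ and $d^{\star}$, and is precisely the computation behind~\cite[Prop.~II.3]{Bernard25}, carried out here for a smooth metric. Combining it with the $O(|\al||\beta||\nabla g|)$ errors of the previous paragraph yields~\eqref{eq:III2-bd}, and~\eqref{eq:III2-scalar} then holds with $\mca R_1$ equal to the resulting difference.

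For~\eqref{eq:III2-vector1} I would expand $\vec\al=\sum_a E^a\ot\al^a$ in an orthonormal basis $\{E^a\}$ of $\bwe^2\R^5$; since $\beta$ is $\R^5$-scalar and $\resg,\bulg$ act only on the $\bwe T^*B^4$-factor, applying~\eqref{eq:III2-scalar} to each $\al^a$ and summing gives the identity with $\vec{\mca R}_1=\sum_a E^a\ot\mca R_1[\al^a,\beta;g]$, and $|\vec{\mca R}_1|=\big(\sum_a|\mca R_1[\al^a,\beta;g]|^2\big)^{1/2}$ produces the first bound in~\eqref{eq:III2-bdvec}. For~\eqref{eq:III2-vector2}, where the top operators $\ovs{\sbul}{\res}_g$ and $\ovs{\sbul}{\sbul}_g$ entangle the $\bwe^2\R^5$-components, I would rerun the coframe computation of the scalar case while additionally expanding the $\bwe^2\R^5$-contractions via~\eqref{bul2vecs} and the Leibniz rule~\eqref{eq:defbul}; the minus sign in front of $d\vec\beta\,\ovs{\sbul}{\res}_g\vec\al$ compensates the antisymmetry $a\sbul b=-b\sbul a$ of $\sbul$ on $\bwe^2\R^5$, so the $\partial\vec\beta$-terms again cancel, and $\vec{\mca R}_2$ is once more a sum of terms each carrying a factor $\nabla\vec\al$ or $de^i$, giving the second bound in~\eqref{eq:III2-bdvec}.
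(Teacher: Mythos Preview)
Your proposal is correct and follows essentially the same approach as the paper: reduction by approximation to smooth data, computation in the $g$-orthonormal coframe~\eqref{gra-sch-4d} with error terms arising from $de^i$, and verification that all derivatives of $\beta$ cancel in the difference. The paper's treatment of~\eqref{eq:III2-vector2} is slightly slicker than yours: rather than rerunning the coframe computation, it reduces directly to the scalar identity by taking simple tensors $\vec\al=\vec v_1\ot\al$, $\vec\beta=\vec v_2\ot\beta$ and using that the antisymmetry of $\sbul$ on $\bwe^2\R^5$ turns the left side into $(\vec v_1\sbul\vec v_2)\ot\big(\al\resg d^{*_g}\beta+d\beta\resg\al\big)$.
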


\begin{proof}

We first prove~\eqref{eq:III2-scalar} with the pointwise bound \eqref{eq:III2-bd}. The argument is by approximation: once \eqref{eq:III2-scalar} and \eqref{eq:III2-bd} are proved for smooth $\al,\beta$, the general case then follows by taking smooth sequences $\al_k\to\al$ in $W^{1,4}$ and weak-$^*$ in $L^\infty$, and $\beta_k\to\beta$ in $L^{4/3}$. We now check all the estimates required for this approximation.
By~\eqref{*gL1+W-143}, we have
\begin{equation}\label{d*gL4/3bd}\begin{aligned}
    \|d^{*_g}\beta\|_{L^1+W^{-1,\frac 43}(B^4)}&=\|-*_g \,d*_g \beta\|_{L^1+W^{-1,\frac 43}(B^4)}\\
    &\le C(\La)\mko\|d*_g \beta\|_{W^{-1,\frac 43}(B^4)}\big(1+\|\g g\|_{L^4(B^4)}\big)\\
    &\le C(\La)\mko \|\beta\|_{L^{\frac 43}(B^4)}\mko\big (1+\|\g g\|_{L^4(B^4)}\big).
    \end{aligned}
\end{equation}
The inequality \eqref{prorL1W-143} then gives \begin{align*}
    &\|\al\, \resg d^{*_{g}} \beta\|_{L^1+W^{-1,\frac 43}(B^4)}\\
    &\le C(\La)\mko\|\al \|_{L^\nf\cap W^{1,4}(B^4)}\mko\|g\|_{L^\nf\cap W^{1,4}(B^4)}\mko\|d^{*_g}\beta\|_{L^1+W^{-1,\frac 43}(B^4)}\\
    &\le C(\La)\mko\|\al \|_{L^\nf\cap W^{1,4}(B^4)}\mko\|\beta\|_{L^{\frac 43}(B^4)}\big(1+\|\g g\|_{L^4(B^4)}^2 \big).
\end{align*} 
Similarly, $d\beta\,\resg \al \in L^1+W^{-1,4/3}(B^4,\R^4) $. In addition, we have \begin{align*}
    \|\al\bulg \beta\|_{L^{\frac 43}(B^4)}+ \|\al\resg \beta\|_{L^{\frac 43}(B^4)}\le C(\La)\mko \|\al\|_{L^\nf(B^4)}\mko\|\beta\|_{L^{\frac 43}(B^4)}.
\end{align*}
Then \eqref{*gL1+W-143} yields $d^{*_{g}}(\al \bulg \beta),\, d\mko ( \al\,\res_g\beta)\in L^1+W^{-1, 4/3}(B^4, \R^4)$. Moreover, the remainder $\mca R_1[\al,\beta;g]$ lies in $L^1(B^4,\R^4)$ provided \eqref{eq:III2-bd} holds. 

Thus by density, we may assume $\al,\,\beta$ are smooth. In this case, the coefficients $\al_{ij},\,\beta_{ij}$ lie in $L^\nf_{\loc} \cap W^{1,4}_{\loc}(B^4)$ by \eqref{forcomnot}. In addition, for $f_1,f_2\in L^\nf\cap W^{1,4}(U)$ with $U\Subset B^4$, we have
\[
    \|f_1 f_2\|_{L^\nf\cap W^{1,4}(U)}\le \|f_1\|_{L^\nf\cap W^{1,4}(U)}\mko \|f_2\|_{L^\nf\cap W^{1,4}(U)}.
\]
Hence $L^\nf_{\loc} \cap W^{1,4}_{\loc}(B^4)$ is closed under pointwise multiplication. It follows that $(\al \bulg \beta)_{ij}$ and $\al\,\res_g\beta$ lie in $L^\nf_{\loc}\cap W^{1,4}_{\loc}(B^4)$, and $\al\, \resg d^{*_{g}} \beta$, $d\beta\,\resg \al$, $d^{*_{g}}(\al \bulg \beta)$, $d\mko ( \al\,\res_g\beta)\in L^4_{\loc}$. It remains to show that \eqref{eq:III2-scalar} holds a.e. with the bound \eqref{eq:III2-bd}.

 The codifferential of $\beta$ is given by
\begin{align}\label{com-d*be}\begin{aligned}
    d^{*_g}\beta&= -*_g d*_g \beta\\
    &=-\frac 12 \sum_{i,j}*_g \,d\big ( \beta_{ij}*_g(e^i\we e^j)\big)\\
    &=-\frac 12 \sum_{i,j}*_g\,\big(d\beta_{ij}\we *_g(e^i\we e^j)\big)+\frac 12\sum_{i,j}\mkt \beta_{ij}\mkt d^{*_g} (e^i\we e^j).
    \end{aligned}
\end{align}
We define 
\begin{equation}\label{defmcaR}
    \mathcal R[\beta;g] \coloneq \frac 12\sum_{i,j}\mkt \beta_{ij}\mkt d^{*_g} (e^i\we e^j).
\end{equation} 
By \eqref{gra-sch-4d}, we have $|\g e^i|\le C(\La)\mko|\g g|$ a.e. in $B^4$. Hence, we have
\begin{align}\label{ptbdRbe}
    |\mca R[\beta;g]|\le C(\La)\mko|\beta|\mko |\g g|\qquad \text{a.e. in }B^4.
\end{align}
For a function $f$ on $B^4$, write $e_i(f)\coloneqq df(e_i)$. Applying the identity~\eqref{*_gcommwed} in \eqref{com-d*be}--\eqref{defmcaR}, we get
\begin{align}\label{d*gbecom}
    d^{*_g}\beta
    &=-\frac 12  \sum_{i,j} (e^i\we e^j)\,\res_g d\beta_{ij}+\mca R[\beta;g]
    = \sum_{i,j} e_j(\beta_{ij}) \mkt e^i+\mca R[\beta;g].
\end{align} 
Arguing as in~\eqref{com-d*be}, we also obtain
\begin{align}\label{dbetaexp}
        \big(d\beta-\mca R'[\beta;g]\big)_{ijk}= e_i(\beta_{jk})-e_j(\beta_{ik})+e_k(\beta_{ij}),
\end{align}
where the remainder satisfies
\begin{align}\label{ptbdR'be}
     |\mca R'[\beta;g]|\le C(\La)\mko|\beta|\mko |\g g|\qquad \text{a.e. in }B^4.
\end{align}
Now we compute that for each $1\le i\le 4$, 
\begin{align*}
    \big(d(\al\res_g\beta)\big)_i&=\frac 12\sum_{j,k} e_i\big(\al_{jk}\mkt \beta_{jk}\big)
    =\frac 12\sum_{j,k}\al_{jk}\mkt e_i(\beta_{jk})+\frac 12\sum_{j,k}\beta_{jk}\mkt e_i(\al_{jk}).
\end{align*}
Then by~\eqref{dbetaexp}, we have
\begin{align}
    \big(\mko(d\beta-\mca R'[\beta;g])\,\res_g\al\big)_i&=\frac 12 \sum_{j,k}\big(d\beta-\mca R'[\beta;g]\big)_{ijk}\, \al_{jk} \nonumber \\
    &=\frac12 \sum_{j,k}\al_{jk}\big(e_i(\beta_{jk})-e_j(\beta_{ik})+e_k(\beta_{ij})\big) \nonumber \\
    &=\big(d(\al\res_g\beta)\big)_i-\sum_{j,k}\al_{jk}\mkt e_j(\beta_{ik})-\frac 12\sum_{j,k}\beta_{jk}\mkt e_i(\al_{jk}). \label{dberesal}
\end{align}
Next, using~\eqref{d*gbecom} we compute
\begin{align}\begin{aligned}\label{alresd*be}
    (\al\,\res_g d^{*_g}\beta)_i
    &=\sum_{k} \al_{ki} (d^{*_g}\beta)_k
    =\sum_{j,k} \al_{ki} \,e_j(\beta_{kj})+\sum_{k} \al_{ki}\mko \mca R[\beta;g]_k.
    \end{aligned}
\end{align}
By~\eqref{bul2vecs}, we have 
\begin{equation}\label{indefbul22}
    (\al \bulg \beta)_{ij}= \sum_{k}(\al_{ik}\mkt \beta_{jk}-\al_{jk}\mkt\beta_{ik}).
\end{equation}
Applying~\eqref{ptbdRbe}--\eqref{d*gbecom} to $\al\bulg \beta$ yields
\begin{align}\label{d*albulbe1}
   \big(d^{*_g}(\al\bulg \beta)\big)_i&=\sum_{j}e_j\big((\al \bulg \beta)_{ij}\big)+\mca R[\al\bulg \beta;g]_i,
\end{align}
with
\begin{equation}\label{ptbdRalbeg}
     |\mca R[\al\bulg \beta;g]|\le C(\La)\mko|\al|\mko|\beta|\mko |\g g|\qquad \text{a.e. in }B^4.
\end{equation}
From \eqref{alresd*be}--\eqref{indefbul22}, we obtain  \begin{align}\label{d*albulbe2}\begin{aligned}
    &\sum_{j}e_j\big((\al \bulg \beta)_{ij}\big)\\
    &=\sum_{j,k}\big(\al_{ik}\mkt e_j(\beta_{jk})-\al_{jk}\mkt e_j(\beta_{ik})\big)+\sum_{j,k} \big(\beta_{jk}\mkt e_j(\al_{ik})-\beta_{ik}\mkt e_j(\al_{jk})\big)\\
    &= (\al\,\res_g d^{*_g}\beta)_i-\sum_{j,k} \al_{jk}\mkt e_j(\beta_{ik})+\sum_{j,k} \big(\beta_{jk}\mkt e_j(\al_{ik})-\beta_{ik}\mkt e_j(\al_{jk})\big)\\
    &\quad-\sum_{k} \al_{ki}\mko \mca R[\beta;g]_k.
    \end{aligned}
\end{align}
We now define $\mca R_1[\al, \beta;g]$ by  
\begin{align}\label{defR1abg}\begin{aligned}
    \mca R_1[\al, \beta;g]_i
    &=\big(\mca R'[\beta;g]\,\res_g\al\big)_i-\frac 12\sum_{j,k}\beta_{jk}\mkt e_i(\al_{jk})-\mca R[\al\bulg \beta;g]_i\\
    &\quad+\sum_k\al_{ki} \mko\mca R[\beta;g]_k -\sum_{j,k} \big(\beta_{jk}\mkt e_j(\al_{ik})-\beta_{ik}\mkt e_j(\al_{jk})\big).
    \end{aligned}
\end{align}
By the definition~\eqref{forcomnot} of $\al_{ik}$, we have \begin{align*}
    |e_j(\al_{jk})|\le C(\La)\big(|\g \al|+ |\al|\mko |\g g|\big)\qquad \text{a.e. in }B^4.
\end{align*} 
Using \eqref{ptbdRbe}, \eqref{ptbdR'be}, and \eqref{ptbdRalbeg}, we then obtain
\begin{align*}
     \big|\mca R_1 [\al,\beta;g]\big|\le C(\La)\big(|\g \al|\mko|\beta|+|\al| \mko|\beta|\mko|\nabla g|\big)\qquad \text{a.e. in }B^4.
\end{align*}
Combining~\eqref{d*albulbe1}, \eqref{d*albulbe2}, and \eqref{defR1abg}, we arrive at \begin{align}\label{befIII2sca}\begin{aligned}
    \big(d^{*_g}(\al\bulg \beta)\big)_i
    &=(\al\,\res_g d^{*_g}\beta)_i-\sum_{j,k} \al_{jk}\mkt e_j(\beta_{ik})-\frac 12\sum_{j,k}\beta_{jk}\mkt e_i(\al_{jk}) \\
    &\quad - \big(\mca R_1[\al, \beta;g]_i-\mca R'[\beta;g]\,\res_g\al\big)_i\mko .
    \end{aligned}
\end{align}
Subtracting \eqref{befIII2sca} from \eqref{dberesal} completes the proof of \eqref{eq:III2-scalar}. 

 Finally, to prove~\eqref{eq:III2-vector1}--\eqref{eq:III2-bdvec}, it suffices to take $\al\in L^\nf\cap W^{1,4}\big(B^4, \bwe^2\R^4\big)$ and $\beta\in L^{4/3}\big(B^4,\bwe^2 \R^4\big)$, and to consider the case $\vec \al= \vec v_1 \ot  \al $, $\vec \beta=\vec v_2 \ot \mathbf \beta$ for fixed $\vec v_1,\vec v_2\in \bwe^2 \R^m$. Then \eqref{eq:III2-scalar} implies that
 \begin{align*}
     \vec\al\,\resg d^{*_{g}}\beta + d\beta\,\resg \vec\al
     &=\vec v_1\ot \big(\al\, \resg d^{*_{g}} \beta + d\beta\, \resg \al\big)\\
     &=\vec v_1\ot\big(d^{*_{g}}(\al \bulg \beta) + d\mko ( \al\,\res_g\beta) +\mca R_1[\al,\beta;g]\big)\\
     &= d^{*_{g}}\big(\vec\al \bulg \beta\big) + d\big( \vec\al\,\resg \beta\big)+\vec v_1 \ot \mca R_1[\al,\beta;g].
 \end{align*}
Using that $\sbul\colon \bwe^2 \R^m \times \bwe^2 \R^m\to \bwe^2\R^m$ is antisymmetric, we also obtain
\begin{align*}
     \vec\al\,\ovs{\sbul}{\res}_g d^{*_{g}}\vec\beta - d\vec\beta\,\ovs{\sbul}{\res}_g \vec\al
     &=(\vec v_1 \sbul \vec v_2)\ot (\al\, \resg d^{*_{g}} \beta)-(\vec v_2 \sbul \vec v_1)\ot(d\beta\, \resg \al)\\
     &=(\vec v_1 \sbul \vec v_2)\ot\big(d^{*_{g}}(\al \bulg \beta) + d\mko ( \al\,\res_g\beta) +\mca R_1[\al,\beta;g]\big)\\
     &=d^{*_{g}}\big(\vec\al\,\ovs{\sbul}{\sbul}_g \vec\beta\big) + d\big( \vec\al\,\ovs{\sbul}{\res}_g\vec\beta\big)+(\vec v_1 \sbul \vec v_2)\ot\mca R_1[\al,\beta;g].
\end{align*}
The proof is complete.
\end{proof}
As in Notation~\eqref{not-met}, we denote by $\bn$ the Gauss map of $\bP$ and by $\pi_{\bn}$ the orthogonal projection onto the normal bundle of $\bP(B^4)\subset \R^m$. Using the operation $\,\res$ on $\bwe \R^m$, for all $\vec v\in \R^m$, we have 
 \begin{equation}\label{pinvres}
    \pi_{\bn}\mko \vec v=(-1)^{m-1}\mko\bn\,\res(\bn\,\res \vec v).
\end{equation}
Since $\bP\in \mathcal{I}_{1,2}(B^4,\R^m)$, we have $\bH\in W^{1,2}(B^4,\R^m)$ and $\bn\in L^\nf \cap W^{2,2}(B^4,\bwe^{m-4}\R^m)$, where $\bH$ is the mean curvature vector of $\bP$. Combining the proofs of~\eqref{prornd}--\eqref{prolorgend} with Lemma~\ref{lm-interpo}, for all $p\in (\frac 43,\nf)$, $q\in [1,\nf]$, $a\in L^\nf \cap W^{1,4}(B^4)$, and $T\in W^{-1,(p,q)}(B^4)$, we obtain 
\begin{align}\label{prolorgen}
     \|aT\|_{W^{-1,(p,q)}(B^4)}\le C(p)\mko\|a\|_{L^\nf \cap W^{1,4}(B^4)} \mko \|T\|_{W^{-1,(p,q)}(B^4)}.
\end{align}
In particular, this implies
\begin{align}\label{*estlor}
    \| *_{g}\|_{W^{-1,(p,q)}\lf(B^4, \bigwedge^\ell \R^4\rg)\to W^{-1,(p,q)}\lf(B^4, \bigwedge^{4-\ell} \R^4\rg)}\le C(\La,p)\big(1+ \|\g g\|_{L^4(B^4)}\big).
\end{align} 
Setting $p=q=2$, we obtain $\lap_g \bH=*_g\, d\,*_g\, d\bH\in W^{-1,2}(B^4)$. Then by~\eqref{pinvres} and~\eqref{prolorgen}, we have $d\bP\we \pi_{\bn}\mko\lap_g \bH\in W^{-1,2}\big(B^4,\bwe^2\R^m \ot \R^4\big)$.
  We now apply Lemmas~\ref{ber-prop:II.2} and \ref{prop:III2ber} to obtain the following result, which will be used in Section~\ref{sec:pfmainThm} to analyze the system~\eqref{sys-LRS}.
\begin{Prop}\label{prop:III3-1rev}
    Suppose $S\in L^{4/3}\big(B^4,\bwe^2 \R^4\big)$, $\vec R\in L^{4/3}\big(B^4,\bwe^2\R^m\ot \bwe^2 \R^4\big)$, $\vec L\in L^1+ W^{-1, 4/3}\big(B^4,\R^m\ot \bwe^2 \R^4\big)$, $\vt_{\dil}\in L^{4/3}(B^4,\R^2)$, and $\bvt_{\rot}\in L^{4/3}\big(B^4,\bwe^2 \R^m\ot \R^4\big)$ satisfy the relations 
\begin{align}\label{sys:d*dSR}
    \begin{dcases}
        d^{*_g} S = \vec{L} \,\dres  d\bP+\vt_{\dil}, \quad &d S = -2\mko \vec L  \dwe d\Phi,\\[0.5ex]
        d^{*_g} \vec R = \vec L \wres d\vec\Phi +\frac 12\mko d\bP\we \pi_{\bn}\mko\lap_g \bH+\bvt_{\rot}, \quad &
    d \vec R = -2\mko \vec L \ovs{\ovwe}\we d\vec\Phi.
    \end{dcases}
\end{align}
Then the following holds in $\mca D'\big(B^4,\bwe^2 \R^m\big)$:
\begin{align}
d^{*_{g}}\big(3\mko \vec R + \vet \,\ovs{\sbul}{\sbul}_g \vec R + \vet \,\bulg S\big)
+ d\big(\vet \,\ovs{\sbul}{\res}_g \vec R + \vet \,\resg S\big)
&=3\mko d\bP\we \pi_{\bn}\mko\lap_g \bH+\vec{\mca R}_0, \label{eq:III3-1}
\end{align}
where the remainder $\vec{\mca R}_0$ satisfies 
\begin{align}\label{R3ptbd}
     |\vec{\mca R}_0|\le C(\La) \big(|\vt_{\dil}|+|\bvt_{\rot}|+ |\g^2\bP|\mko(|\bR |+|S|) \mko\big)\qquad \text{a.e. in }B^4.
\end{align}
\end{Prop}

\begin{proof}
Set
\begin{equation*}
\begin{alignedat}{2}
    &A=d^{*_{g}}S-\vt_{\dil},&\qquad& B=-\mko dS,\qquad \\
    &\vec C= d^{*_{g}}\vec R-\frac 12\mko d\bP\we \pi_{\bn}\mko\lap_g \bH-\bvt_{\rot},&\qquad& \vec D=-\mko d\vec R.
\end{alignedat}
\end{equation*}
Then by~\eqref{prorL1W-143} and~\eqref{d*gL4/3bd}, we have $A,B,\vec C,\vec D\in L^1+W^{-1,4/3}(B^4)$. Lemma~\ref{ber-prop:II.2} implies that in $L^1+W^{-1,4/3}\big(B^4,\bwe^2\R^m\ot \R^4\big)$, there holds
\[
    -3\mko \vec C =\vec\eta \,\ovs{\sbul}{\res}_g \vec C+ \vec D\, \ovs{\sbul}{\res}_g\vec\eta+\vec\eta\,\resg A- B\mko \resg\vet.
\]
Equivalently, it holds that
\begin{align}\label{strideapp}\begin{aligned}
&-3\mko d^{*_{g}}\vec R+\frac 32\mko d\bP\we \pi_{\bn}\mko\lap_g \bH+\frac 12\mko\vet \,\ovs{\sbul}{\res}_g(d\bP\we \pi_{\bn}\mko\lap_g \bH)\\
&= \vet \,\ovs{\sbul}{\res}_g d^{*_{g}}\vec R
  - d\vec R\,\ovs{\sbul}{\res}_g \vet
  + \vet \,\resg d^{*_{g}}S
  + dS \,\res_g\vet
   -3\mko\bvt_{\rot}-\vet \,\ovs{\sbul}{\res}_g \bvt_{\rot}-\vet\,\resg \vt_{\dil}.
  \end{aligned}
\end{align}
Using the notation \eqref{forcomnot}, we compute 
\begin{align}\label{etabulresnwe}
\begin{aligned}
    \big(\vet \,\ovs{\sbul}{\res}_g (d\bP\we \pi_{\bn}\mko\lap_g \bH)\mko\big)_i
    &=\sum_{j=1}^4 \vet_{ji}\sbul ( \vec e_j\we \pi_{\bn}\mko\lap_g \bH)\\ &=\sum_{j=1}^4 ( \vec e_i\we \pi_{\bn}\mko\lap_g \bH-\de^j_i \mko \vec e_j\we  \pi_{\bn}\mko\lap_g \bH) \\
    &=3\mko \vec e_i\we  \pi_{\bn}\mko\lap_g \bH =3(d\bP\we  \pi_{\bn}\mko\lap_g \bH)_i.
    \end{aligned}
\end{align}
By~\eqref{eq:III2-vector1}--\eqref{eq:III2-bdvec}, we have
\begin{align}\label{appber25II3rev}\begin{dcases}
   \vet \,\ovs{\sbul}{\res}_g d^{*_{g}}\vec R
  - d\vec R\,\ovs{\sbul}{\res}_g \vet=  \mko d^{*_{g}}\big(\vet \,\ovs{\sbul}{\sbul}_g \vec R\big)+d\big(\vet\,\ovs{\sbul}{\res}_g \bR\big)+\mca {\vec R}_2 \mko[\vet,\bR;g],\\
  \vet \,\resg d^{*_{g}}S
  + dS \,\res_g\vet =d^{*_g}\big(\vet \,\bulg S\big)
+ d\big( \vet \,\resg S\big)
+ \vec {\mca R}_1[\vet,S;g],
\end{dcases}
\end{align}
with \[
   |\mca {\vec R}_2 \mko[\vet,\bR;g]|+ |\vec {\mca R}_1[\vet,S;g]|\le C(\La)\big(|\g \vec \eta|+|\g g|)\mko (|\bR|+|S|)\le C(\La)\mko |\g^2\bP| \mko(|\bR|+|S|).
\]
Let 
\begin{equation}\label{defvecR3}
    \vec{\mca R}_0\coloneqq 3\mko\bvt_{\rot}+\vet \,\ovs{\sbul}{\res}_g \bvt_{\rot}+\vet\,\resg \vt_{\dil}-\big(\mca {\vec R}_2 \mko[\vet,\bR;g] +\vec {\mca R}_1[\vet,S;g]\big).
\end{equation}
Then $\vec{\mca R}_0$ satisfies the bound~\eqref{R3ptbd}. Combining~\eqref{strideapp}--\eqref{defvecR3} yields~\eqref{eq:III3-1}, which completes the proof.
\end{proof}
\begin{Rm}\label{rm:g^2Pdecom}
    Following the convention~\eqref{con-formsob}, we write $|\bII|^2 \coloneqq\sum_{i,j=1}^4 \big|\pi_{\bn}\mko\p_{x^i}\p_{x^j} \bP\big|^2$.
We decompose the Hessian of $\bP$ using $g_{ij}=\p_{x^i} \bP\cdot\p_{x^j}\bP$ and $(g^{ij})=(g_{ij})^{-1}$ as follows
\begin{align}\label{d2phi=n+t}
   \p_{x^i}\p_{x^j}\bP=\bII_{ij} +\sum_{k=1}^4 g^{k\ell}(\p_{x^i}\p_{x^j} \bP \cdot \p_{x^k} \bP )\mko\p_{x^\ell} \bP.
\end{align}
As for the computation of Christoffel symbols, we have 
\begin{align}\label{chriscom}\begin{aligned}
   & \big| \p_{x^i}\p_{x^j} \bP \cdot \p_{x^k} \bP\big| \\
   &=\frac 12 \mko\big|\p_{x^i} (\p_{x^j}\bP\cdot\p_{x^k}\bP)+\p_{x^j}(\p_{x^i}\bP\cdot \p_{x^k}\bP)-\p_{x^k}(\p_{x^i}\bP\cdot \p_{x^j}\bP) \big|\\
    &\le |\g g|.
    \end{aligned}
\end{align}
It follows that
\begin{align}\label{get<II+gg}
    |\g^ 2\bP|\le C(\La)\mko(|\bII|+|\g g|)\le C(\La)\mko(|\g\bn|+|\g g|).
\end{align}
See for instance~\eqref{pn=II} for the relation between $d\bn$ and $\bII$. Analogously, by differentiating~\eqref{d2phi=n+t}, we obtain that
\begin{align}\label{ptbdg3p}
    |\g^3\bP|\le C(\La)\big( |\g g|^2+|\g \bn|^2+|\g^2 g|+|\g^2 \bn|\big).
\end{align}
\end{Rm}
To deal with the term $3\mko d\bP\we \pi_{\bn}\mko\lap_g \bH$ in~\eqref{eq:III3-1}, we need the following lemma.
\begin{Lm}\label{lm:d*XwedPhi}
    Let $\vec X\in L^{4/3}(B^4,\R^m\ot \R^4)$. Then the following holds in $\mca D'(B^4, \R^4)$:
    \begin{align}\label{d*XwedPhi}
        d^{*_g}\big(\vec X \ovs{\ovwe}{\we} d\bP \big)=(d^{*_g} \vec X) \we d\bP+d\big(\vec X \wres d\bP\big)+d\vec X \wres d\bP+\vec {\mca R}_1[\vec X;g],
    \end{align}
    where the remainder satisfies 
\begin{align}\label{vecR4ptbd}
     |\vec{\mca R}_1[\vec X;g]|\le C(\La)\mko|\g^2\bP|\mko |\vec X|\qquad \text{a.e. in }B^4.
\end{align}
\end{Lm}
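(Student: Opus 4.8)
\emph{The plan.} The plan is to prove \eqref{d*XwedPhi} by first reducing to the case of smooth $\vec X$ by density, and then expanding both sides in the $g$-orthonormal frame $\{e_i\}$ of \eqref{gra-sch-4d} with the component notation \eqref{forcomnot}, exactly in the spirit of the proof of Lemma~\ref{prop:III2ber}. The mechanism is that every term carrying a first derivative of $\vec X$ cancels once one subtracts the three explicit terms on the right-hand side, so that $\vec{\mca R}_1[\vec X;g]$ ends up being a sum of pointwise products of $\vec X$ with either $\g g$ or a second derivative of $\bP$ — and the latter is controlled by $|\II|+|\g g|$ through the Hessian decomposition \eqref{d2phi=n+t}--\eqref{get<II+gg}.

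\emph{Reduction.} Using $g,\bP\in L^\nf\cap W^{1,4}(B^4)$ and $\bP\in W^{3,2}(B^4,\R^5)$, one checks that each term in \eqref{d*XwedPhi} depends continuously on $\vec X\in L^{4/3}(B^4,\R^5\ot\R^4)$ with values in $L^1+W^{-1,4/3}(B^4)$: $\vec X\ovwe\we d\bP\in L^{4/3}\subset W^{-1,4/3}$, and by \eqref{prorL1W-143} (together with the boundedness of $g$ and of $(\det g)^{\pm\frac12}$) the maps $\vec X\mapsto d^{*_g}(\vec X\ovwe\we d\bP)$, $\vec X\mapsto (d^{*_g}\vec X)\we d\bP$, $\vec X\mapsto d(\vec X\wres d\bP)$, $\vec X\mapsto d\vec X\wres d\bP$ are all bounded; moreover $\vec{\mca R}_1[\vec X;g]$, once \eqref{vecR4ptbd} is known, lies in $L^1(B^4)$ because $|\II|,|\g g|\in L^4(B^4)$ (since $\g^2\bP\in L^4$ in dimension $4$) and $\vec X\in L^{4/3}$. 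Hence it suffices to prove \eqref{d*XwedPhi}--\eqref{vecR4ptbd} for smooth $\vec X$, in which case the components $\vec X_i=\vec X(e_i)$ lie in $L^\nf_{\loc}\cap W^{1,4}_{\loc}(B^4)$.

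\emph{The frame identity.} Write $\vec X=\sum_i\vec X_i\,e^i$ and $d\bP=\sum_i\vec e_i\,e^i$ with $\vec e_i=d\bP(e_i)$, so that $(\vec X\ovwe\we d\bP)_{ij}=\vec X_i\we\vec e_j-\vec X_j\we\vec e_i$ and $\vec X\wres d\bP=\sum_i\vec X_i\we\vec e_i$ (as $(e^i)$ is $g$-orthonormal). Applying the $2$-form codifferential formula of \eqref{d*gbecom} componentwise in the $\bwe^2\R^5$-factor gives, for each $1\le p\le 4$,
\begin{align*}
  \big(d^{*_g}(\vec X\ovwe\we d\bP)\big)_p
  =\sum_j\big(e_j(\vec X_p)\we\vec e_j-e_j(\vec X_j)\we\vec e_p\big)
   +\sum_j\big(\vec X_p\we e_j(\vec e_j)-\vec X_j\we e_j(\vec e_p)\big)+\mca R_p,
\end{align*}
where $|\mca R_p|\le C(\La)\,|\vec X|\,|\g g|$. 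Likewise, using $d^{*_g}\vec X=-\sum_j e_j(\vec X_j)+\mca R'$ with $|\mca R'|\le C(\La)|\vec X||\g g|$, the fact that $d$ on $0$-forms has $p$-th component $e_p(\,\cdot\,)$ in any frame, and $(d\vec X\wres d\bP)_p=\sum_k(d\vec X)_{kp}\we\vec e_k$ with $(d\vec X)_{kp}=e_k(\vec X_p)-e_p(\vec X_k)-\vec X([e_k,e_p])$, one expands the three terms on the right of \eqref{d*XwedPhi}. Subtracting, all contributions containing some $e_j(\vec X_k)$ cancel in pairs and one is left with
\begin{align*}
  \vec{\mca R}_1[\vec X;g]_p
  =\sum_j\vec X_p\we e_j(\vec e_j)-\sum_j\vec X_j\we e_j(\vec e_p)-\sum_m\vec X_m\we e_p(\vec e_m)
   +\sum_k\vec X([e_k,e_p])\we\vec e_k+\mca R_p-\mca R'\we\vec e_p.
\end{align*}
Every surviving term is a pointwise product of $\vec X$ with $\g g$ (via $\mca R_p$, $\mca R'$, and the brackets $[e_k,e_p]$, which satisfy $|[e_k,e_p]|\le C(\La)|\g g|$ by \eqref{gra-sch-4d}) or with a second derivative $e_j(\vec e_k)$; since $\vec e_k=d\bP(e_k)$ and $e_i\in L^\nf\cap W^{1,4}$, one has $|e_j(\vec e_k)|\le C(\La)(|\g^2\bP|+|\g g|)\le C(\La)(|\II|+|\g g|)$ by \eqref{d2phi=n+t}--\eqref{get<II+gg}. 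This yields \eqref{vecR4ptbd}, hence \eqref{d*XwedPhi} for smooth $\vec X$, and then in general by density.

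\emph{Main obstacle.} The one genuinely delicate point is the bookkeeping of the cancellations: the orientation $(d\vec X\wres d\bP)_p=\sum_k(d\vec X)_{kp}\we\vec e_k$ (and not $\sum_k(d\vec X)_{pk}\we\vec e_k$) is exactly what makes the two $\sum_k e_p(\vec X_k)\we\vec e_k$ terms cancel, and a sign slip there would leave an uncontrolled first-order term $c\,(d\vec X\wres d\bP)_p$ in the remainder. A conceptually cleaner but technically equivalent route is to apply the classical Leibniz rule $d^{*_g}(a\we b)=(d^{*_g}a)\,b-(d^{*_g}b)\,a+\g^g_{b^\sharp}a-\g^g_{a^\sharp}b$ componentwise, with $a$ a scalar component of $\vec X$ and $b$ the corresponding scalar component of $d\bP$: then $d^{*_g}b$ is a component of $-\lap_g\bP=-4\bH$, hence $\le C(\La)|H|\le C(\La)|\II|$ in modulus, and $\g^g b$ is a component of the normal Hessian $\g^g d\bP=\bII$, again $\le C(\La)|\II|$, so both non-principal terms are automatically of remainder type; but this requires first smoothing $g$ and invoking $d^{*_g}=-\sum_i\iota_{e_i}\g^g_{e_i}$, so the frame computation above, paralleling Lemma~\ref{prop:III2ber}, is preferable for consistency.
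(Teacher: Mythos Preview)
Your proof is correct and follows essentially the same approach as the paper: reduce to smooth $\vec X$ by density, expand both sides in the $g$-orthonormal frame using the codifferential formula \eqref{d*gbecom}, check that all first-order terms $e_j(\vec X_k)$ cancel, and bound the residual zero-order terms via $|e_j(\vec e_k)|\le C(\La)(|\II|+|\g g|)$. The only cosmetic difference is that the paper absorbs the Lie bracket $\vec X([e_k,e_p])$ into its remainder $\vec{\mca R}'[\vec X;g]$ from the outset (as in \eqref{dbetaexp}--\eqref{ptbdR'be}), whereas you keep it explicit and bound it separately by $|[e_k,e_p]|\le C(\La)|\g g|$; this changes nothing substantive.
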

\begin{proof}
    Arguing as in Lemma~\ref{prop:III2ber}, we obtain that each term in~\eqref{d*XwedPhi} lies in $W^{-1,4/3}+L^1(B^4,\R^4)$. Then by approximation, it suffices to prove~\eqref{d*XwedPhi}--\eqref{vecR4ptbd} for $\vec X\in C^\nf(B^4,\R^m \ot \R^4)$.
    Using the notation~\eqref{forcomnot} together with~\eqref{ptbdRbe}--\eqref{d*gbecom}, we obtain
    \begin{align}\label{d*XwedPfirex}
    \begin{aligned}
        d^{*_g}\big(\vec X \ovs{\ovwe}{\we} d\bP \big)
        &=\sum_{i,j} e_j\big(\vec X_i \we \vec e_j-\vec X_j \we \vec e_i\big) \, e^i+\vec{\mca R}\big[\vec X \ovs{\ovwe}{\we} d\bP;g\big]\\
        &=\sum_{i,j} \big(  e_j(\vec X_i) \we \vec e_j-e_j(\vec X_j)\we \vec e_i \big)\mko e^i+\vec{\mca R}\big[\vec X \ovs{\ovwe}{\we} d\bP;g\big]\\
        &\quad+\sum_{i,j}\big(\vec X_i\we e_j(\vec e_j)-\vec X_j\we e_j(\vec e_i)\mko\big)\mko e^i,
        \end{aligned}
    \end{align}
    where the remainder $\vec{\mca R}\big[\vec X \ovs{\ovwe}{\we} d\bP;g\big]$ satisfies
    \begin{align}\label{RXwedPh}
        \big|\vec{\mca R}\big[\vec X \ovs{\ovwe}{\we} d\bP;g\big] \big|\le C(\La) \mko |\vec X|\mko|\g g|\qquad \text{a.e. in }B^4.
    \end{align}
   On the other hand, the same proof of~\eqref{d*gbecom}--\eqref{dbetaexp} implies that 
   \begin{align}
      &d^{*_g}\vec X=- \sum_{j}e_j(\vec X_j)+\vec{\mca R}[\vec X;g],\label{expofd*X}\\
      &d\vec X=\frac 12\sum_{i,j} \big(e_i(\vec X_j)-e_j(\vec X_i)\big)\mko e^i\we e^j+\vec{\mca R}'[\vec X;g],\label{expofdX}
   \end{align}
   where the remainders satisfy
   \begin{align}\label{ptbdRR'X}
       |\vec{\mca R}[\vec X;g]|+|\vec{\mca R}'[\vec X;g]|\le C(\La)\mko |\g g|\mko|\vec X|\qquad \text{a.e. in }B^4.
   \end{align}
   Using~\eqref{expofdX} together with the identity $\sum_{j=1}^4 \vec X_j \we \vec e_j=\vec X\wres d\bP$, we obtain
\begin{align}\label{e_jXiweej}\begin{aligned}
       &\sum_{i,j} \big(  e_j(\vec X_i) \we \vec e_j\big)\mko e^i\\
       &=\sum_{i,j} \Big(  \big(e_j(\vec X_i)-e_i(\vec X_j)\mko \big) \we \vec e_j\Big) e^i+\sum_{i,j}\Big(e_i\big(\vec X_j \we \vec e_j \big)\mkt e^i-\big(\vec X_j\we e_i(\vec e_j)\mko\big)\mkt e^i \Big)\\
       &=\sum_{i,j} \Big(  \big(\vec{\mca R}'[\vec X;g]-d\vec X\big)_{ij} \we \vec e_j\Big) e^i+\sum_{i}e_i\big(\vec X\wres d\bP\big)\mkt e^i-\sum_{i,j}\big(\vec X_j\we e_i(\vec e_j)\mko\big)\mkt e^i\\
       &=\big(d\vec X-\vec{\mca R}'[\vec X;g]\big)\wres d\bP+d\big(\vec X\wres d\bP\big)-\sum_{i,j}\big(\vec X_j\we e_i(\vec e_j)\mko\big)\mkt e^i.
       \end{aligned}
   \end{align}
   By~\eqref{expofd*X}, we also have
   \begin{equation}\label{ejXje^id*X}
       -\sum_{i,j}\big(e_j(\vec X_j)\we \vec e_i \big)\mko e^i=\big(d^{*_g}\vec X- \vec{\mca R}[\vec X;g]\big)\we d\bP.
   \end{equation}
   Now we define
   \begin{equation}\label{defR1Xg}
   \begin{aligned}
       \vec {\mca R}_1[\vec X;g]
       &\coloneqq \vec{\mca R}\big[\vec X \ovs{\ovwe}{\we} d\bP;g\big]+\sum_{i,j}\big(\vec X_i\we e_j(\vec e_j)-\vec X_j\we e_j(\vec e_i)\mko\big)\mko e^i\\
       &\quad -\vec{\mca R}'[\vec X;g]\wres d\bP-\sum_{i,j}\big(\vec X_j\we e_i(\vec e_j)\mko\big)\mkt e^i-\vec{\mca R}[\vec X;g]\we d\bP.
       \end{aligned}
   \end{equation}
   Substituting~\eqref{e_jXiweej}--\eqref{defR1Xg} into~\eqref{d*XwedPfirex} then gives
   \begin{align*}
       d^{*_g}\big(\vec X \ovs{\ovwe}{\we} d\bP \big)=d\vec X \wres d\bP+d\big(\vec X \wres d\bP\big)+(d^{*_g} \vec X) \we d\bP+\vec {\mca R}_1[\vec X;g].
   \end{align*}
   The equation~\eqref{d*XwedPhi} is thus proved. Since $\vec e_i=d\bP(e_i)$, by~\eqref{gra-sch-4d} we have
   \begin{align*}
       |e_i(\vec e_j)|\le C(\La)\mko|\g^2\bP|\qquad \text{a.e. in }B^4.
   \end{align*}
  The pointwise bound~\eqref{vecR4ptbd} then follows from~\eqref{RXwedPh},~\eqref{ptbdRR'X}, and~\eqref{defR1Xg}.
\end{proof}
Before applying Proposition~\ref{prop:III3-1rev}, we provide here another lemma.
\begin{Lm}[cf.\ {\cite[Prop.~II.4]{Bernard25}}]\label{lm:nd*RS}
   Let $S\in L^{4/3}\big(B^4,\bwe^2 \R^4\big)$, and $\vec R\in L^{4/3}\big(B^4,\bwe^2\R^m\ot \bwe^2 \R^4\big)$. Then the following holds in $\mca D'(B^4)$: 
   \begin{align}\label{eq:norproRS}
      \pi_{\bn}\mkt d^{*_g}\Big( \big(\vet \,\ovs{\sbul}{\res}_g \vec R+ \vet \,\resg S\big)\mkt \res d\bP\Big)=-\pi_{\bn}  \big(\mko (d^{*_g} \bR\mko)\, \ovs{\res}\res_g d\bP \big)+\vec {\mca R}_2[\bR;g],
   \end{align}
    where the remainder $\vec {\mca R}_2[\bR;g]$ satisfies 
    \begin{align}\label{rmdbdnRS}
        |\vec {\mca R}_2[\bR;g]|\le C(\La)\mko |\bII|\mko |\bR| \qquad \text{a.e. in }B^4.
    \end{align}
\end{Lm}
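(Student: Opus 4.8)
\textbf{Reduction.} The plan is to reduce to smooth $S,\vec R$ and then compute in the $g$-orthonormal moving frame. Arguing as at the beginning of the proofs of Lemmas~\ref{prop:III2ber} and~\ref{lm:d*XwedPhi}, I would first verify that every term of~\eqref{eq:norproRS} lies in $W^{-1,4/3}+L^1(B^4,\R^4)$, using $\vet,\bP,\bn\in L^\infty\cap W^{1,4}(B^4)$ together with~\eqref{prorL1W-143}, \eqref{d*gL4/3bd}, and~\eqref{*gL1+W-143}, and that the pointwise bound~\eqref{rmdbdnRS} is stable under approximation. Hence it suffices to prove~\eqref{eq:norproRS}--\eqref{rmdbdnRS} for $S\in C^\infty$, $\vec R\in C^\infty$, in which case all the frame components below lie in $L^\infty_{\loc}\cap W^{1,4}_{\loc}(B^4)$ and the identities hold a.e.

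\textbf{Set-up.} Fix the $g$-orthonormal frame $(e_i)$ of~\eqref{gra-sch-4d}, its dual coframe $(e^i)$, and $\vec e_i:=d\bP(e_i)$, so that $\vec e_i\cdot\vec e_j=\delta_{ij}$, $\vec e_i\cdot\bn=0$, $\bn\cdot\bn=1$ (hence $\bn\cdot d\bn=0$), and the Gauss--Weingarten relations $e_j(\vec e_i)=\II_{ij}\bn+\sum_k\Gamma^k_{ij}\vec e_k$, $e_j(\bn)=-\sum_k\II_{jk}\vec e_k$ hold with $\II_{ij}=\II_{ji}$, $|\Gamma^k_{ij}|\le C(\La)|\g g|$, $|d^{*_g}e^i|\le C(\La)|\g g|$. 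Use the fibrewise orthogonal splitting $\R^5=T\oplus N$, $T:=\operatorname{span}(\vec e_1,\dots,\vec e_4)$, $N:=\R\bn$, so $\bwe^2\R^5=\bwe^2 T\oplus(T\we\bn)$, and set $\vec W:=\vet\,\ovs{\sbul}{\res}_g\vec R+\vet\,\resg S$, a $\bwe^2\R^5$-valued $0$-form with $|\vec W|\le C(\La)(|\bR|+|S|)$. Since $\vet_{ij}=\vec e_i\we\vec e_j$ and $(\vec e_i\we\vec e_j)\res\bn=0$, the $S$-part $\vet\,\resg S=\tfrac12\sum_{i,j}S_{ij}\,\vec e_i\we\vec e_j$ lies in $\bwe^2 T$; hence $\vec w:=\vec W\res\bn=(\vet\,\ovs{\sbul}{\res}_g\vec R)\res\bn\in T$ depends on $\vec R$ only, $|\vec w|\le C(\La)|\bR|$, and $\vec W=\vec W^{TT}+\vec w\we\bn$ with $\vec W^{TT}\in\bwe^2 T$. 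Using $(\vec w\we\bn)\res\vec e_j=-(\vec w\cdot\vec e_j)\,\bn$, I would rewrite
\[
\big(\vet\,\ovs{\sbul}{\res}_g\vec R+\vet\,\resg S\big)\ovs{\res}{\we}d\bP=\vec T-(\vec w\cdot d\bP)\,\bn,
\]
where $\vec T:=\sum_j(\vec W^{TT}\res\vec e_j)\,e^j$ is a $T$-valued $1$-form and $\vec w\cdot d\bP:=\sum_j(\vec w\cdot\vec e_j)\,e^j$ is a scalar $1$-form.

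\textbf{The two sides.} For the left-hand side, apply $d^{*_g}$ and pair with $\bn$: since $\bn\cdot\bn=1$ and $\bn\cdot d\bn=0$ give $\bn\cdot d^{*_g}(f\bn)=d^{*_g}f$ for scalar $1$-forms $f$, the second term contributes $-d^{*_g}(\vec w\cdot d\bP)$; for the first, expanding $d^{*_g}\vec T$ as in~\eqref{expofd*X}, its lower-order remainder is $T$-valued hence $\bn$-orthogonal, and the leading term obeys $\bn\cdot e_j(\vec W^{TT}\res\vec e_j)=-e_j(\bn)\cdot(\vec W^{TT}\res\vec e_j)=\sum_k\II_{jk}\langle\vec W^{TT}\res\vec e_j,\vec e_k\rangle$, which vanishes after summation because $\II$ is symmetric while $\langle\vec W^{TT}\res\vec e_j,\vec e_k\rangle$ is antisymmetric in $(j,k)$. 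Thus the left side equals $-d^{*_g}(\vec w\cdot d\bP)$. For the right-hand side, expand $d^{*_g}\vec R=\sum_{i,j}e_j(\vec R_{ij})\,e^i+\mca R[\vec R;g]$ with $|\mca R[\vec R;g]|\le C(\La)|\bR||\g g|$ (cf.~\eqref{d*gbecom}, \eqref{ptbdRbe}), contract with $d\bP$, pair with $\bn$, and split $\vec R_{ij}=\vec R_{ij}^{TT}+\vec\rho_{ij}\we\bn$, $\vec\rho_{ij}:=\vec R_{ij}\res\bn\in T$: by Gauss--Weingarten the $\vec R^{TT}_{ij}$- and $\vec\rho_{ij}\we e_j(\bn)$-parts contribute only tangentially apart from terms of the form $\II_{\bullet\bullet}$ times frame components of $\vec R$ (absorbed into $\mca R_1$), while $e_j(\vec\rho_{ij})\we\bn$ yields $-\sum_{i,j}\langle e_j(\vec\rho_{ij}),\vec e_i\rangle$ up to $|\g g||\bR|$-terms. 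From $\vet\,\ovs{\sbul}{\res}_g\vec R=\tfrac12\sum_{i,j}(\vec e_i\we\vec e_j)\sbul\vec R_{ij}$ and, by~\eqref{eq:defbul}, $(\vec e_i\we\vec e_j)\sbul(\vec\rho_{ij}\we\bn)=\big((\vec e_i\we\vec e_j)\res\vec\rho_{ij}\big)\we\bn$, one computes $\vec w=\sum_{i,j}(\vec\rho_{ij}\cdot\vec e_j)\,\vec e_i$, whence $d^{*_g}(\vec w\cdot d\bP)=-\sum_{i,j}e_j(\vec\rho_{ij}\cdot\vec e_i)+(|\g g||\bR|\text{-terms})$. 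The two leading sums agree after relabeling, and the residual intrinsic ($|\g g|$) terms on the two sides --- those carrying $d^{*_g}e^i$, the $\Gamma^k_{ij}$'s, and $\mca R[\vec R;g]$ --- must match and cancel identically, being the only non-$\II$ contributions present. Collecting, the difference of the two sides of~\eqref{eq:norproRS} is a finite sum of $\II_{\bullet\bullet}$ against frame components of $\vec R$, which yields~\eqref{rmdbdnRS}.

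\textbf{Main obstacle.} The delicate step is this last cancellation: one must carry along \emph{every} Christoffel and $d^{*_g}e^i$ term on both sides --- not merely bound it by $C(\La)|\g g||\bR|$ --- and verify their exact coincidence. This is the combinatorial core of the lemma and is precisely~\cite[Prop.~II.4]{Bernard25} transcribed into the present four-dimensional notation; it is the symmetry of $\II$ against the antisymmetry of $S$ and $\vec R$, together with $\bn\perp T$, that kills all $|S|$-, $|\g g|$-, and $|\g H|$-type terms and leaves exactly $C(\La)|\II||\bR|$.
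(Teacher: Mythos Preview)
Your reduction and the computation of the left-hand side are correct: the splitting $\vec W=\vec W^{TT}+\vec w\wedge\bn$ together with the symmetry/antisymmetry argument indeed gives $\bn\cdot d^{*_g}\vec T=0$ exactly, so the left side equals $-d^{*_g}(\vec w\cdot d\bP)$ with no remainder, and the $S$-contribution vanishes as you say. The gap is in your treatment of the right-hand side. When you expand $d^{*_g}\vec R$ and $d^{*_g}(\vec w\cdot d\bP)$ in the moving frame, both produce residual terms of order $|\nabla g|\,|\vec R|$ (from $d^{*_g}e^i$, from $\mca R[\vec R;g]$, and from the tangential part of $e_i(\vec e_j)$). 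Your assertion that these ``must match and cancel identically, being the only non-$\II$ contributions present'' is circular: you are using the truth of the lemma to infer the cancellation, rather than verifying it. As written, your argument only yields $|\mca R_1|\le C(\La)(|\II|+|\nabla g|)|\vec R|$, which is strictly weaker than~\eqref{rmdbdnRS} (though it would in fact suffice for the application in Corollary~\ref{co:sysvecu}).

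The paper bypasses this combinatorics entirely by a different decomposition. Instead of splitting the $0$-form $\vec W$, it splits $\vec R=\vec R^{\,\perp}+\vec R^{\,\top}$ with $\vec R^{\,\perp}:=\bn\wedge(\vec R\res\bn)$. For $\vec R^{\,\top}$ one has $\vec R^{\,\top}\res\bn=0$ and $\bn\cdot\big((\vet\ovs{\sbul}{\res}_g\vec R^{\,\top})\ovs{\res}{\we}d\bP\big)=0$, so \emph{both} sides are individually $O(|\II||\vec R|)$ by the identity $(d^{*_g}\vec A)\res\bn=d^{*_g}(\vec A\res\bn)+*_g((*_g\vec A)\ovs{\res}{\we}d\bn)$. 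For $\vec R^{\,\perp}$, a direct pointwise computation on the basis $\{(\bn\wedge\vec e_k)\otimes(e^i\wedge e^j)\}$ gives the algebraic identity
\[
\big(\vet\ovs{\sbul}{\res}_g\vec R^{\,\perp}\big)\ovs{\res}{\we}d\bP=\vec R^{\,\perp}\ovs{\res}{\res}_g\,d\bP,
\]
after which the Hodge-star commutations~\eqref{*_gcommwed}--\eqref{*_gcomres} turn $d^{*_g}$ on the left into $-(d^{*_g}\vec R^{\,\perp})\ovs{\res}{\res}_g d\bP$ \emph{exactly}, with no Christoffel terms ever appearing. The $S$-piece is handled the same way: one shows $(\vet\resg S)\ovs{\res}{\we}d\bP=-S\wres d\bP$ and then $\bn\cdot d^{*_g}(S\wres d\bP)=\bn\cdot((d^{*_g}S)\wres d\bP)=0$ exactly via the same Hodge identities. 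This is what makes the sharp bound~\eqref{rmdbdnRS} (no $|\nabla g|$) come for free.
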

\begin{proof}
    Since $\big(\vet \,\ovs{\sbul}{\res}_g \vec R+ \vet \,\resg S\big)\mkt \res d\bP\in L^{4/3}(B^4,\R^m\ot \R^4)$ and $\bn\in L^\nf \cap W^{1,4}(B^4,\R^m)$, the inequality~\eqref{prorL1W-143} and the proof of~\eqref{d*gL4/3bd} imply that
    \begin{align*}
         \pi_{\bn}\mkt d^{*_g}\Big( \big(\vet \,\ovs{\sbul}{\res}_g \vec R+ \vet \,\resg S\big)\mkt \res d\bP\Big)\in L^1+W^{-1,\frac 43}(B^4,\R^m).
    \end{align*}
    Similarly, it holds that
    \[
      \pi_{\bn} \big(\mko (d^{*_g} \bR\mko)\, \ovs{\res}\res_g d\bP \big)\in L^1+W^{-1,\frac 43}(B^4,\R^m).
   \]
    As in Lemma \ref{prop:III2ber}, by approximation it suffices to prove~\eqref{eq:norproRS}--\eqref{rmdbdnRS} for $\bR,S\in C^\nf$. Using the notation~\eqref{forcomnot}, we first compute
\begin{align*}
    \big(\mko(\vet\,\resg S)\mkt \res d\bP\big)_i&=\frac 12\sum_{j,k}S_{jk}\,\vet_{jk}\mkt \res \vec e_i\\
    &=\frac 12\sum_{j,k} S_{jk}(\de_i^j \mkt\vec e_k-\de_i^k\mkt\vec e_j)\\
    &= \sum_{k} S_{ik}\mkt \vec e_k\\
    &=-\big(S\,\resg d\bP\big)_i\,.
\end{align*}
By~\eqref{*_gcommwed} and~\eqref{*_gcomres}, it follows that
\begin{align}\label{vetresSnorvan}
\begin{aligned}
    \pi_{\bn}\mkt d^{*_g}\Big( \big( \vet \,\resg S\big)\mkt \res d\bP\Big)
     &=-\pi_{\bn}\mkt d^{*_g} \big(S\,\resg d\bP \big)\\
     &=\pi_{\bn} *_g d*_g  \big(S\,\resg d\bP \big)\\
     &=-\pi_{\bn} *_g \,d\big( \mko( *_g \,S)\we d\bP \big)\\
     &=\pi_{\bn} \big(\mko (d^{*_g} S) \,\resg d\bP\big)=0.
\end{aligned}
\end{align}We denote by $T\bP$ and $N\bP$ the tangent and normal bundles of $\bP(B^4)\subset\R^m$ respectively. Then $\bR$ admits a unique decomposition $\bR=\bR^{\perp\perp}+\bR^{\top\perp}+\bR^{\top\top}$ such that for a.e. $x\in B^4$, it holds that
    \begin{align}\label{defRperptop}
        \bR_{ij}^{\perp\perp}(x)\in \bwe^2N_x\bP,\qquad \bR^{\top\perp}_{ij}(x)\in T_x\bP\bwe N_x\bP,\qquad \bR_{ij}^{\top\top}(x)\in \bwe^2T_x\bP.
    \end{align}
For the right-hand side of~\eqref{eq:norproRS}, similar to~\eqref{vetresSnorvan}, we have
\begin{align}\label{rhsd*Rresdp}
\begin{aligned}
    \mkt(d^{*_g} \bR\mko)\,\ovs{\res}\res_g d\bP&=-(*_g \,d *_g \bR)\, \ovs{\res}{\res}_g d\bP\\
    &=-*_g \big(d (*_g \,\bR)  \ovs{\res}{\we} d\bP\big)\\
    &=-*_g d\big(\mkt (*_g \,\bR)  \ovs{\res}{\we} d\bP\big)\\
    &=*_g\, d*_g\big(\bR\,\ovs{\res}{\res}_g d\bP \big)\\
    &=-\mko d^{*_g} \big(\bR\, \ovs{\res}{\res}_g d\bP \big).
\end{aligned}
\end{align}
Since $\vet\,\ovs{\sbul}{\res}_g\bR^{\perp\perp}=0$ and $\bR^{\perp\perp}\,\ovs{\res}{\res}_g d\bP=0$, the identity~\eqref{rhsd*Rresdp} shows that it suffices to consider only the contributions of $\bR^{\top\perp}$ and $\bR^{\top\top}$ on both sides of~\eqref{eq:norproRS}. By~\eqref{defRperptop}, we have
\begin{align}\label{nres=0}
    \bn\,\res \Big(\big(\vet \,\ovs{\sbul}{\res}_g \bR^{\top\top}\big)\mkt \res d\bP  \Big)=0\qquad\text{and}\qquad \bn\,\res \big(\bR^{\top\top}\,\ovs{\res}{\res}_g d\bP \big)= 0.
\end{align}
Using~\eqref{pinvres} and~\eqref{nres=0}, we then estimate
\begin{align}\label{estd*Rresdp}
\begin{aligned}
    \Big|\pi_{\bn}\mkt d^{*_g}\big(\bR^{\top\top}\,\ovs{\res}{\res}_g d\bP\big)\Big|&=\Big|\bn\,\res d^{*_g}\big(\bR^{\top\top}\,\ovs{\res}{\res}_g d\bP \big) \Big|\\
    &=\Big|d^{*_g}\Big(\bn\,\res \big(\bR^{\top\top}\,\ovs{\res}{\res}_g d\bP \big) \Big)+*_g \Big(d\bn\ovs{\res}{\we} *_g\big(\bR^{\top\top}\,\ovs{\res}{\res}_g d\bP \big) \Big) \Big|\\
    &\le C(\La)\mko|\bR^{\top\top}|\mko|d\bn|\le C(\La)\mko|\bII|\mko |\bR|.
\end{aligned}
\end{align}
Similarly, we have
\begin{align}\label{d*RetaRdpest}
    \Big|\pi_{\bn}\mkt d^{*_g} \Big(\big(\vet \,\ovs{\sbul}{\res}_g \bR^{\top\top}\big)\mkt \res d\bP  \Big) \Big|\le C(\La)\mko |\bII|\mko |\bR|.
\end{align}
Now we define
\[
\vec {\mca R}_2[\bR;g]\coloneqq \pi_{\bn}\mkt d^{*_g} \Big(\big(\vet \,\ovs{\sbul}{\res}_g \bR^{\top\top}\big)\mkt \res d\bP  \Big)- \pi_{\bn}\mkt d^{*_g}\big(\bR^{\top\top}\,\ovs{\res}{\res}_g d\bP\big). 
\]
Then by~\eqref{vetresSnorvan} and~\eqref{rhsd*Rresdp}, we have
\begin{align}
\label{R2+d*Rperp}\begin{aligned}
     & \pi_{\bn} \mkt d^{*_g}\Big( \big(\vet \,\ovs{\sbul}{\res}_g \vec R+ \vet \,\resg S\big)\mkt \res d\bP\Big)+\pi_{\bn} \big(\mko (d^{*_g} \bR\mko)\,\ovs{\res}\res_g d\bP \big)\\
      &=\vec {\mca R}_2[\bR;g]+ \pi_{\bn}\mkt d^{*_g}\Big( \big(\vet \,\ovs{\sbul}{\res}_g \bR^{\top\perp}\big)\mkt \res d\bP\Big)-\pi_{\bn}\mkt d^{*_g}\big(\bR^{\top\perp}\,\ovs{\res}{\res}_g d\bP\big).
      \end{aligned}
\end{align}
In addition, the estimates~\eqref{estd*Rresdp}--\eqref{d*RetaRdpest} give
\[
     |\vec {\mca R}_2[\bR;g]|
     \le C(\La)\mko |\bII|\mko |\bR|\qquad \text{a.e. in }B^4.
\]
Finally, to compute the right-hand side of~\eqref{R2+d*Rperp}, let $x\in B^4$ and $\bv\in N_x\bP$. We have
\begin{align}\label{bsetaburesno}\begin{aligned}
    \Big(\vet \,\ovs{\sbul}{\res}_g\big( \mko(\bv \we \vec e_k)\ot ( e^i\we e^j)\big)\Big)\mkt \res d\bP&=\big(\vet_{ij}\sbul (\bv \we \vec e_k)\big)\mkt\res d\bP \\
    &=(\de_i^k\mkt  \bv \we \vec e_j- \de_j^k\mkt \bv\we \vec e_i)\mkt\res d\bP\\
    &= -\de_i^k\mkt  \bv\ot e^j +\de_j^k\mkt \bv \ot e^i\\
    &=\big( \mko(\bv \we \vec e_k)\ot(e^i\we e^j)\big)\mkt \ovs{\res}{\res}_g d\bP.
    \end{aligned}
\end{align}
Since for a.e. $x\in B^4$, $\bR^{\top\perp}(x)$ is spanned by $\big\{(\bv \we \vec e_k)\ot( e^i\we e^j)\colon 1\le i,j,k\le 4,\mkt\bv\in N_x\bP\big\}$, the equation~\eqref{bsetaburesno} implies
\begin{align}\label{etabuR=Rres}
     \big(\vet \,\ovs{\sbul}{\res}_g \bR^{\top\perp}\big)\mkt \res d\bP= \bR^{\top\perp} \mkt\ovs{\res}{\res}_g d\bP.
\end{align}
Combining~\eqref{R2+d*Rperp} with~\eqref{etabuR=Rres} yields~\eqref{eq:norproRS}. This completes the proof.
\end{proof}
Combining Proposition~\ref{prop:III3-1rev} and Lemmas~\ref{lm:d*XwedPhi}--\ref{lm:nd*RS}, we obtain the following equations for the system~\eqref{sys:d*dSR}.
\begin{Co}[cf.~{\cite[Thm.~I.6 and Cor.~I.1]{Bernard25}}]\label{co:sysvecu}
    Let $\bL$, $S$, $\bR$ be as in Proposition \ref{prop:III3-1rev}. Define 
    \begin{align}\label{defuetabul}
        \vec u\coloneqq\vet \,\ovs{\sbul}{\res}_g \vec R + \vet \,\resg S+3\mko d\bH \wres d\bP.
    \end{align}
    Then the following hold in $\mca D'\big(B^4, \bwe^2 \R^m\ot \bwe^4 \R^4\big)$ and $\mca D'(B^4,\R^m)$ respectively:
    \begin{align}
    &d*_gd\vec u=d*_g \vec{\mca R}_3,\label{eq:d*du=d*R3+}\\
        &\pi_{\bn}\mkt d^{*_g}( \vec u \,\res d\bP)=\lap_g \bH+ \vec{\mca R}_4,\label{eq:ndotd*uresdp}
    \end{align}
    where the remainders satisfy a.e. in $B^4$,
    \begin{align}
        |\vec{\mca R}_3|+ | \vec{\mca R}_4|\le C(\La) \big(|\vt_{\dil}|+|\bvt_{\rot}|+|\g^3\bP|\mko|\bH|+|\g^2\bP|\mko (|\bR |+|S|+|\g \bH|)\mko\big).\label{remtotptbd}
    \end{align}
   \end{Co}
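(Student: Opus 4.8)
The plan is to obtain both identities by assembling Proposition~\ref{prop:III3-1rev} and Lemmas~\ref{lm:d*XwedPhi}--\ref{lm:nd*RS} with the relations~\eqref{sys:d*dSR}, treating the term $3\mkt\bn\we(d\bP\mko\resg dH)$ of~\eqref{defuetabul} separately. Throughout, all products and applications of $*_g$ are distributional and legitimized by approximating $\vec L$, $S$, $\vec R$ (and $g$) by smooth tensors exactly as in Lemmas~\ref{ber-prop:II.2}--\ref{lm:nd*RS}, using the continuity bounds~\eqref{prorL1W-143}--\eqref{*gL1+W-143}.

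For~\eqref{eq:d*du=d*R3+}, the key observation is that it suffices to write $d\vec u = d^{*_g}\vec P+\vec{\mca R}_2$ for some $2$-form $\vec P$ and some $\vec{\mca R}_2$ obeying~\eqref{remtotptbd}: indeed, in dimension $4$ one has $*_g d^{*_g}\vec P = d*_g\vec P$ on $2$-forms (from $d^{*_g}=-*_g d*_g$ in~\eqref{defd*g4d} together with $*_g*_g=-\mathrm{Id}$ on $3$-forms), whence $d*_g d^{*_g}\vec P=d(d*_g\vec P)=0$ and $d*_g d\vec u=d*_g\vec{\mca R}_2$. To produce $\vec P$, split $d\vec u = d(\vet\mko\ovs{\sbul}{\res}_g\vec R+\vet\mko\resg S)+3\mkt d\big(\bn\we(d\bP\mko\resg dH)\big)$. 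Proposition~\ref{prop:III3-1rev} handles the first summand, giving $-d^{*_g}(3\vec R+\vet\mko\ovs{\sbul}{\sbul}_g\vec R+\vet\mko\bulg S)+\vec{\mca R}_0-3\mko\lap_g H\,\bn\we d\bP$. For the second summand, apply Lemma~\ref{lm:d*XwedPhi} with $\vec X=\bn\,dH\in L^{4/3}(B^4,\R^5\ot\R^4)$; a computation in the $g$-orthonormal frame $(e_i)$ shows $\vec X\wres d\bP=\bn\we(d\bP\mko\resg dH)$, so the lemma rewrites $3\mkt d(\bn\we(d\bP\mko\resg dH))$ as
\begin{align*}
3\mkt d^{*_g}\big((\bn\,dH)\ovs{\ovwe}{\we}d\bP\big)-3\mkt(d^{*_g}(\bn\,dH))\we d\bP-3\mkt d(\bn\,dH)\wres d\bP-3\mkt\vec{\mca R}_1[\bn\,dH;g].
\end{align*}
Now the Leibniz rule $d^{*_g}(\bn\,dH)=\bn\mkt d^{*_g}(dH)-\langle d\bn,dH\rangle_g=-\lap_g H\,\bn-\langle d\bn,dH\rangle_g$ produces the cancellation $-3(d^{*_g}(\bn\,dH))\we d\bP-3\lap_g H\,\bn\we d\bP=3\mko\langle d\bn,dH\rangle_g\we d\bP$, pointwise $\le C(\La)\mko|\II|\mko|\g H|$ since $|d\bn|\le C(\La)|\II|$ and $|d\bP|\le C(\La)$; likewise $d(\bn\,dH)\wres d\bP=(d\bn\ovs{\ovwe}{\we}dH)\wres d\bP$ is $\le C(\La)\mko|\II|\mko|\g H|$, and $|\vec{\mca R}_1[\bn\,dH;g]|\le C(\La)(|\II|+|\g g|)|\g H|$ by~\eqref{vecR4ptbd}. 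Hence $\vec P:=-3\vec R-\vet\mko\ovs{\sbul}{\sbul}_g\vec R-\vet\mko\bulg S+3\mkt(\bn\,dH)\ovs{\ovwe}{\we}d\bP$ works, with $\vec{\mca R}_2$ the sum of $\vec{\mca R}_0$ and the three pieces just estimated, all dominated by $C(\La)(|\II|+|\g g|)(|\bR|+|S|+|\g H|)$.

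For~\eqref{eq:ndotd*uresdp}, expand $\vec u\ovs{\res}{\we}d\bP$ by bilinearity. Lemma~\ref{lm:nd*RS} gives $\bn\cdot d^{*_g}\big((\vet\mko\ovs{\sbul}{\res}_g\vec R+\vet\mko\resg S)\ovs{\res}{\we}d\bP\big)=-\bn\cdot\big((d^{*_g}\vec R)\ovs{\res}{\res}_g d\bP\big)+\mca R_1$ with $|\mca R_1|\le C(\La)|\II|\mko|\bR|$. For the remaining piece, a fibrewise computation (using $\langle\vec e_i,\vec e_j\rangle_{\R^5}=g(e_i,e_j)=\delta_{ij}$ and $\bn\cdot\vec e_i=0$) yields $(\bn\we(d\bP\mko\resg dH))\ovs{\res}{\we}d\bP=-\bn\,dH$, hence $3\mkt\bn\cdot d^{*_g}(-\bn\,dH)=3\mkt\bn\cdot(\lap_g H\,\bn+\langle d\bn,dH\rangle_g)=3\lap_g H$, the last step because $\bn\cdot e_i(\bn)=\tfrac12 e_i(|\bn|^2)=0$. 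Finally, substituting $d^{*_g}\vec R=-\vec L\wres d\bP-\tfrac12\lap_g H\,\bn\we d\bP$ from~\eqref{sys:d*dSR} and using the fibrewise identities $(\bn\we d\bP)\ovs{\res}{\res}_g d\bP=-4\mkt\bn$ and $\bn\cdot\big((\vec L\wres d\bP)\ovs{\res}{\res}_g d\bP\big)=\sum_{i,j}(\vec L_{ij}\cdot\vec e_j)(\bn\cdot\vec e_i)=0$, one obtains $-\bn\cdot\big((d^{*_g}\vec R)\ovs{\res}{\res}_g d\bP\big)=\tfrac12\lap_g H\cdot(-4)=-2\lap_g H$. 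Adding up, $\bn\cdot d^{*_g}(\vec u\ovs{\res}{\we}d\bP)=3\lap_g H-2\lap_g H+\mca R_1=\lap_g H+\mca R_1$, and $|\mca R_1|\le C(\La)|\II|\mko|\bR|$ is dominated by the right side of~\eqref{remtotptbd}.

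The conceptual core is thus merely the bookkeeping combination of the structural identities; the actual difficulty is twofold and technical. First, since $\vec L\in L^1+W^{-1,4/3}$ and $S,\vec R\in L^{4/3}$ are distributions, every product and every application of $*_g$ above must be justified by the smoothing-and-passing-to-the-limit argument, together with the continuity of $*_g$, $\resg$, $\bulg$ on the pertinent spaces. Second --- and here care rather than ingenuity is needed --- one must keep all sign conventions straight ($*_g*_g=(-1)^{k(n-k)}$ by degree, $d^{*_g}=-*_g d*_g$ when $n=4$, $(a\wedge b)\res v=\langle a,v\rangle b-\langle b,v\rangle a$, and the variants $\we$, $\ovs{\ovwe}{\we}$, $\ovs{\res}{\we}$, $\ovs{\res}{\res}_g$) and verify the numerology: the coefficient $3$ in~\eqref{defuetabul} and the coefficient $\tfrac12$ in~\eqref{sys:d*dSR} are precisely what makes the $\lap_g H\,\bn\we d\bP$ contribution drop out of $d\vec u$ (leaving a pure codifferential $d^{*_g}\vec P$) and what makes the net coefficient of $\lap_g H$ in~\eqref{eq:ndotd*uresdp} equal to $1$.
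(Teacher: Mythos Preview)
Your proof is correct and follows essentially the same route as the paper's: both arguments combine Proposition~\ref{prop:III3-1rev} with Lemma~\ref{lm:d*XwedPhi} applied to $\vec X=\bn\,dH$ (using $\vec X\wres d\bP=\bn\we(d\bP\resg dH)$) to exhibit $d\vec u$ as a codifferential plus a controlled remainder, and both derive~\eqref{eq:ndotd*uresdp} from Lemma~\ref{lm:nd*RS} together with the fibrewise identities $(\bn\we d\bP)\ovs{\res}{\res}_g d\bP=-4\bn$ and $\bn\cdot\big((\vec L\wres d\bP)\ovs{\res}{\res}_g d\bP\big)=0$. The organization and the key cancellations (the $-3\lap_gH\,\bn\we d\bP$ term against $-3(d^{*_g}\vec X)\we d\bP$, and the numerology $3-2=1$ in the second identity) are identical.
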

   \begin{proof}
        By Proposition~\ref{prop:III3-1rev}, we obtain the following equation in $W^{-1,4/3}+L^1\big(B^4,\bwe^2 \R^m\ot \R^4\big)$:
   \begin{align}\label{d*3RdRS}
d^{*_{g}}\big(3\mko \vec R + \vet \,\ovs{\sbul}{\sbul}_g \vec R + \vet \,\bulg S\big)
+ d\big(\vet \,\ovs{\sbul}{\res}_g \vec R + \vet \,\resg S\big)
&= 3\mko d\bP\we \pi_{\bn}\mko\lap_g \bH+\vec{\mca R}_0,
\end{align}
with \begin{equation}\label{vecR0ptbdfinstr}
     | \vec{\mca R}_0|\le C(\La) \big(|\vt_{\dil}|+|\bvt_{\rot}|+ |\g^2\bP|\mko(|\bR |+|S|) \mko\big)\qquad \text{a.e. in }B^4.
\end{equation}
Set $\vec X=d\bH$. Since $d\vec X=0$ and $d^{*_g}\vec X=-\lap_g\bH$, by Lemma~\ref{lm:d*XwedPhi}, we have
\begin{align}\label{lapHndPdd*sys}\begin{aligned}
    d\bP\we \lap_g\bH&=(d^{*_g} \vec X) \we d\bP\\
    &= d^{*_g}\big(\vec X \ovs{\ovwe}{\we} d\bP \big)-d\big(\vec X \wres d\bP\big)-d\vec X \wres d\bP-\vec {\mca R}_1[\vec X;g]\\
    &= d^{*_g}\big(\vec X \ovs{\ovwe}{\we} d\bP \big)-d\big(\vec X \wres d\bP\big)-\vec {\mca R}_1[\vec X;g].
    \end{aligned}
\end{align}
In the above computation, the remainder term $\vec {\mca R}_1[\vec X;g]$ satisfies 
\begin{align}
    \label{vecR1ptbdfinstr}
     |\vec{\mca R}_1[\vec X;g]|\le C(\La)\mko|\g^2\bP|\mko |\g \bH|\qquad \text{a.e. in }B^4.
\end{align}
We denote by $\pi_T$ the orthogonal projection onto the tangent bundle $T\bP$ of $\bP(B^4)\subset \R^m$, and define 
\begin{equation}\label{defvecR2}
    \vec {\mca R}_3\coloneqq  \vec {\mca R}_0+3\lf(-d\bP\we\pi_{T}\lap_g\bH-\vec {\mca R}_1[\vec X;g] \rg).
\end{equation}
Combining~\eqref{d*3RdRS} and~\eqref{lapHndPdd*sys} with~\eqref{defvecR2} then yields
\begin{align}\label{d*dsysRSX}
    d^{*_{g}}\lf(3\mko \vec R + \vet \,\ovs{\sbul}{\sbul}_g \vec R + \vet \,\bulg S-3\mko d\bH \ovs{\ovwe}{\we} d\bP\rg)
+ d\vec u= \vec{\mca R}_3.
\end{align}
Now we estimate $\pi_T\lap_g\bH$. Since $\bH\cdot d\bP=0$, we have
\begin{align}\label{gpHdotpp}
    |\g(\p_j\bH\cdot \p_i \bP)|=|-\g(\bH\cdot \p_j\p_i\bP)|\le C(\La) \big( |\g^3\bP|\, |\bH|+|\g\bH|\, |\g^2\bP|\mko\big).
\end{align}
It follows that 
\begin{align}\label{estpiTlapH}
\begin{aligned}
    |\pi_T\mko \lap_g\bH|&=|g^{ij}(\lap_g\bH\cdot \p_i\bP) \p_j\bP|\\
    &=\big|-g^{ij}\big(\lan d\bH,d\mko\p_i\bP\ran_g-d^{*_g}(d\bH\cdot \p_i\bP)\mko\big)\p_j\bP\big|\\
    &\le C(\La) \big( |\g^3\bP|\mko |\bH|+|\g\bH|\mko|\g^2\bP|\mko\big).
    \end{aligned}
\end{align}
The desired estimate for $\vec{\mca R}_3$ in~\eqref{remtotptbd} then follows from~\eqref{vecR0ptbdfinstr}, \eqref{vecR1ptbdfinstr},~\eqref{defvecR2}, and~\eqref{estpiTlapH}.

By~\eqref{*gL1+W-143}, we can apply $d\,*_g$ to \eqref{d*3RdRS}, and since $d*_g d^{*_g}=-\,d*_g *_g \,d \,*_g=d^2 \,*_g=0$ on $2$-forms, we obtain in $W^{-2,4/3}+W^{-1,1}\big(B^4,\bwe^2 \R^m\ot \bwe^4 \R^4\big)$ that\footnote{Here we apply $d\,*_g$ instead of $d^{*_g}$ to~\eqref{d*3RdRS}, since $*_g$ is not well-defined on $W^{-2,4/3}+W^{-1,1}(B^4,\bwe \R^4)$. In fact, $*_g$ is even not well-defined on $W^{-2,(2,\nf)}\big(B^4,\bwe \R^4\big)$, see Remark~\ref{rm*W-22nf}.}
\begin{align}\label{d*dq=d*R}
\begin{aligned}
     d*_gd\vec u
    = d*_g \vec{\mca R}_3-d*_g d^{*_{g}}\lf(3\mko \vec R + \vet \,\ovs{\sbul}{\sbul}_g \vec R + \vet \,\bulg S-3\mko d\bH \ovs{\ovwe}{\we} d\bP\rg)=d*_g \vec{\mca R}_3.
\end{aligned}
\end{align}
The equation~\eqref{eq:d*du=d*R3+} is thus proved.
 Next, by Lemma~\ref{lm:nd*RS}, we obtain in $L^1+W^{-1,4/3}(B^4)$ that
\begin{align}\label{guNep+ra}
    \begin{aligned}
          \pi_{\bn}\mkt d^{*_g}\Big( \big(\vet \,\ovs{\sbul}{\res}_g \vec R+ \vet \,\resg S\big)\mkt \res d\bP\Big)=-\pi_{\bn}  \big(\mko (d^{*_g} \bR\mko)\, \ovs{\res}\res_g d\bP \big)+\vec {\mca R}_2[\bR;g],
    \end{aligned}
\end{align}
where the remainder satisfies
\begin{equation}\label{estR2}
        |\vec {\mca R}_2[\bR;g]|\le C(\La)\mko |\bII|\mko |\bR| \qquad \text{a.e. in }B^4.
 \end{equation}
The expression of $d^{*_g}\bR$ in~\eqref{sys:d*dSR} states that
\begin{align}\label{defd*R=-Lweres}
     d^{*_g} \vec R = \vec L \wres d\vec\Phi +\frac 12\mko d\bP\we \pi_{\bn}\mko\lap_g \bH+\bvt_{\rot}.
\end{align} 
Using the $g$-orthonormal frame $(e_i)$ and coframe $(e^i)$ as in~\eqref{bsetaburesno}, for $\vec v\in \R^m$ and $i\neq j$, we set $\vec \gamma\coloneqq\vec v \ot(e^i\we e^j)$ and compute 
\begin{align*}
   \pi_{\bn} \Big( \big(\vec \ga \wres d\vec\Phi\big)\ovs{\res}{\res}_g d\bP\Big)
    &=\pi_{\bn} \Big( \big(\mko(\vec v \we \vec e_i) \ot e^j -  (\vec v \we \vec e_j)\ot e^i\big) \ovs{\res}{\res}_g d\bP\Big)\\
    &=\pi_{\bn}  \big( \mko(\vec v \cdot \vec e_j)\mko \vec e_i- (\vec v \cdot \vec e_i) \mko\vec e_j \big)=0.
\end{align*}
Since $\bL= \sum_{i<j} \bL_{ij}\ot ( e^i\we e^j)$ with $\bL_{ij}\in W^{-1,4/3}+L^1(B^4,\R^m)$, it follows that in $W^{-1,4/3}+L^1(B^4)$,
\begin{align}\label{ndotLweresdpres}
   \pi_{\bn} \Big(\mko \big(\vec L \wres d\vec\Phi\mko\big) \mkt\ovs{\res}\res_g d\bP \Big)=0.
\end{align}
Hence by~\eqref{defd*R=-Lweres}--\eqref{ndotLweresdpres} and using $d\bP\,\ovs{\res}\res_g d\bP=4$, we have
\begin{align}\begin{aligned}
\label{ndotd*Rresdp}
     \pi_{\bn} \big(\mko (d^{*_g} \bR\mko)\, \ovs{\res}\res_g d\bP \big)
     &=2 \mko\pi_{\bn}\mko\lap_g \bH+\pi_{\bn}\big(\bvt_{\rot}\mkt\ovs{\res}\res_g d\bP \big).
     \end{aligned}
\end{align}
Moreover, we compute
\begin{align}\label{dHwdpresdp}
\begin{aligned}
    \big(d\bH \wres d\bP\big) \mkt\res d\bP
    &= (g^{ij}\p_i\bH \we \p_j\bP) \mkt\res d\bP\\
    &=g^{ij}(\p_i\bH\cdot d\bP)\mko \p_j\bP-g^{ij}(\p_j\bP\cdot d\bP)\mko\p_i\bH\\
    &=g^{ij}(\p_i\bH\cdot d\bP)\mko \p_j\bP-d\bH.
\end{aligned}
\end{align}
We define 
\begin{align}\label{defR4}
    \vec{\mca R}_4\coloneq -\pi_T\lap_g\bH-\pi_{\bn}\big(\bvt_{\rot}\mkt\ovs{\res}\res_g d\bP \big)+\vec {\mca R}_2[\bR;g]+3\mko\pi_{\bn}\mkt d^{*_g}\big(g^{ij}(\p_i\bH\cdot d\bP)\mko \p_j\bP \big).
\end{align}
Since $\bn\,\res \p_j\bP=0$, the same proof of~\eqref{estd*Rresdp} implies that
\begin{align}\label{estpind*dHdp}
    \big|\pi_{\bn}\mkt d^{*_g}\big(g^{ij}(\p_i\bH\cdot d\bP)\mko \p_j\bP \big)\big|\le C(\La)\mko |\bII|\mko |\g\bH|.
\end{align}
The pointwise bound for $\vec{\mca R}_4$ in~\eqref{remtotptbd} then follows from~\eqref{estpiTlapH},~\eqref{estR2}, and~\eqref{estpind*dHdp}.

Finally, combining~\eqref{guNep+ra} and~\eqref{ndotd*Rresdp}--\eqref{defR4} with the definition~\eqref{defuetabul} of $\vec u$ yields
\begin{align*}
    \pi_{\bn} \mkt d^{*_g}( \vec u \, \res d\bP)&=-\pi_{\bn}  \big(\mko (d^{*_g} \bR\mko)\, \ovs{\res}\res_g d\bP \big)+\vec {\mca R}_2[\bR;g]+3\mko \pi_{\bn}\mkt d^{*_g}\Big( \big(d\bH \wres d\bP\big) \mkt\res d\bP \Big)\\
    &=-2\mko\pi_{\bn}\mko \lap_g\bH-\pi_{\bn}\big(\bvt_{\rot}\mkt\ovs{\res}\res_g d\bP \big)+\vec {\mca R}_2[\bR;g]+3\mko\pi_{\bn}\mkt d^{*_g}\big(g^{ij}(\p_i\bH\cdot d\bP)\mko \p_j\bP \big)+3\mko \pi_{\bn}\mko\lap_g\bH\\
    &=\lap_g\bH+\vec {\mca R}_4.
\end{align*}
 This completes the proof.
\end{proof}
   
\section{Proof of Theorem~\ref{th-main}}\label{sec:pfmainThm}
In this section, we complete the proof of Theorem~\ref{th-main}. Let $\La\ge 1$ be a constant and $\vec{\Phi}\in \mathcal{I}_{1,2}(B^4, \R^m)$ satisfy
\begin{align}\label{weaimmconbP}
    \Lambda^{-1} \mko |v|^2_{\R^4} \le | d\vec{\Phi}_x(v)|^2_{\R^m}  \le \Lambda  \mko |v|^2_{\R^4},\qquad \text{for a.e. }x\in B^4 \text{ and all }v\in T_x B^4.
\end{align}
As in~\eqref{defE},  we fix $\vec c=(c_s)_{s=1}^8\in \R^8$, and define 
\[E_{\bc}(\bP)\coloneq \int_{B^4}\Big(|d\bH|_g^2+\sum_{s=1}^8 c_s P_s(g,\bII)\Big)\dvol_g.\]
We define the codifferential $d^{*_g}$ with respect to $g=g_{\bP}$ as in \eqref{defd*g4d}. Throughout Section~\ref{sec:pfmainThm}, we write $\p_i=\p_{x^i}$, and use the notations~\eqref{not-overset} and~\eqref{con-formsob}. To establish the regularity of $\bP$, we use the structural equations from~\cite{Bernard25} and carry out the analytic proof in full detail.   

\subsection{The Euler--Lagrange equation and estimate of the Noether current $\vec V$}\label{sec:EullagestV}
To prove Proposition~\ref{sysestLSR}, we use the pointwise invariance of the integrand by translations, dilations, and rotations in the ambient space to obtain the divergence-form Euler--Lagrange equation together with some conservation laws for weak critical points of $E$,  as in~\cite{Bernard25} and \cite{LaMaRi25}. For completeness, we present here a detailed proof. These conservation laws form the main ingredients for Proposition~\ref{sysestLSR}, proved at the end of the next subsection.

 \subsubsection*{The Noether current associated to translations.}

\begin{Lm}
\label{lm-critic-W^{1,1}} Let $\bP\in \mca I_{1,2}(B^4,\R^m)$ satisfy~\eqref{weaimmconbP}. Then there exist $\vec l_0\in W^{-1,\frac 43}+L^1(B^4,\R^m\ot \R^4)$ and $(\vec l_s)_{s=1}^8\subset L^1(B^4,\R^m\ot \R^4) $ depending on $\bP$ such that the following hold:
\begin{align}\label{ELequ}
    \begin{dcases}
    (i)\, \forall 1\le s\le 8, \quad |\vec l_s|\le C(\La)\big( |\g^2 \bn|\mko  |\bII|^2+ |\bII|^3(|\bII|+|\g g|)\mko \big)\quad \text{a.e.}\\
    (ii)\,\text{$\bP$ is a critical point of $E_{\bc}$ if and only if }d*_g\Big(\frac 12\mko d\lap_g\bH+\vec l_0+\sum_{s=1}^8c_s\vec l_s\Big)=0.
    \end{dcases}
\end{align}
\end{Lm}
\begin{proof}
 We first compute the variation $\frac d{dt}\, E_{\bc}(\bP+t\mko \bw)\big|_{t=0}$ for a smooth immersion $\bP\colon B^4\to \R^m$ and $\vec w\in C_c^\infty(B^4,\R^m)$. Then since $W^{2,2}(B^4)\hookrightarrow VMO(B^4)$, by the approximation result \cite[Thm.~IV.23]{LaMaRi25}, the computation remains valid for $\bP\in \mathcal{I}_{1,2}(B^4, \R^m)$.

 Let $\bP_t=\bP+t\mko \bw$, and we denote $g=g_{\bP}$, $g_t=g_{\vec{\Phi}_t}$, $\bH=\bH_{\bP}$, $\bH_t=\bH_{\bP_t}$, etc. By standard computations, see for instance \cite[Section V.1]{LaMaRi25}, we have
\begin{align}\label{varmet}\begin{dcases}
    \frac{d}{dt} \,g_t^{ij}\mko\Big|_{t=0}=-g^{ik}g^{\ell j}(\p_{k}\vec w \cdot\p_{\ell}\bP+\p_{\ell}\vec w \cdot\p_{k}\bP),\\
    \frac{d}{dt} \,\dvol_{g_t}\mko\Big|_{t=0}=d\bw \dwe *_g \,d\bP.
    \end{dcases}
\end{align}
We obtain the pointwise variation
\begin{align}\label{ptvar4d}
\begin{aligned}
    &\frac d{dt}\big(|d\bH_t|_{g_t}^2\,\dvol_{g_t}\big)\Big|_{t=0}\\
    &= \lf(\frac{d}{dt}\, g_t^{ij}\Big|_{t=0}\p_i \bH\cdot\p_j \bH+ 2\,\bigg\langle d\bH,d\Big(\frac{d}{dt}\mkt \bH_t\Big|_{t=0}\Big) \bigg\rangle_{\!g} \rg)\dvol_g+ |d\bH|_g^2\mkt \frac{d}{dt}\,\dvol_{g_t}\Big|_{t=0}\\
    &=-\mko 2\mko g^{ik}g^{\ell j}(\p_{k}\vec w \cdot\p_{\ell}\bP )\mkt(\p_i \bH\cdot\p_j \bH)\,\dvol_g-2\mkt (*_g \,d\bH)\dwe d\Big(\frac{d}{dt}\mkt \bH_t\Big|_{t=0}\Big)\\
    &\quad +|d\bH|_g^2\, d\bw \dwe *_g \,d\bP\\
    &=-\mko 2\mko g^{ik}g^{\ell j}(\p_{k}\vec w \cdot\p_{\ell}\bP )\mkt(\p_i \bH\cdot\p_j \bH)\,\dvol_g+2\mko d*_g\Big(\frac{d}{dt}\mkt \bH_t\Big|_{t=0}\cdot d\bH \Big)\\
    &\quad -2\mko \frac{d}{dt}\mkt \bH_t\Big|_{t=0}\cdot \lap_g \bH\,\dvol_g+|d\bH|_g^2\, d\bw \dwe *_g \,d\bP
    \end{aligned}
\end{align}
Next, we compute the pointwise variations of $\bn_t$ and $\bH_t$. As in Lemma~\ref{lm:Hdnstrequ}, let $(\bn_{\al,t})_{\al=1}^{m-4}\subset C^\nf\big(B^4\times (-1,1),\R^m\big)$ be orthonormal frames with $\bn_t=\bn_{1,t}\we\cdots \we \bn_{m-4,t}$, and denote by $\pi_T$ the orthogonal projection onto the tangent bundle of $\bP(B^n)\subset \R^m$. Then we compute
\begin{align*}
    \pi_T\mko \frac{d}{dt}\mko \bn_{\al,t}\mko\Big|_{t=0}
    =g^{ij}\Big(\p_{i}\bP \cdot \frac{d}{dt}\mko \bn_{\al,t}\mko\Big|_{t=0}\Big)\mko \p_{j}\bP
    =-g^{ij}\Big(\bn_\al \cdot \frac{d}{dt}\mko \p_{i}\bP_t\mko\Big|_{t=0}\Big)\mko\p_{j}\bP
    =-\mko d\bP\, \resg (\bn_\al\cdot d\bw).
\end{align*} 
 Since $\frac{d}{dt} \mko\bn_{\al,t}\mko\big|_{t=0}\cdot \bn_\al  =0$, it follows that
 \begin{equation}\label{vargau}
 \begin{aligned}
     \frac d{dt}\mko \bn_t\mko\Big|_{t=0}
     &=\sum_{\al=1}^{m-4} \Big(\pi_T \frac d{dt}\mko \bn_{\al,t}\mko\Big|_{t=0}\Big)\we (\bn\,\res \bn_\al)\\
     &=-\sum_{\al=1}^{m-4}\big(d\bP\, \resg (\bn_\al\cdot d\bw)\mko\big)\we (\bn\,\res \bn_\al)\\
     &=-\mko d\bP \wres (\bn\, \res d\bw).
     \end{aligned}
 \end{equation}
 By a similar argument, we obtain that
 \begin{align}\label{pn=II}
     \p_i\bn= -g^{jk} \p_k\bP \we (\bn \, \res \bII_{ij}).
 \end{align}
 Now for all $\vec v\in \R^m$, we have 
 \begin{equation}\label{resreppin}
    \pi_{\bn}\mko \vec v=(-1)^{m-1}\mko\bn\,\res(\bn\,\res \vec v).
 \end{equation}
 Hence, for all $\vec v_1,\vec v_2\in \R^m$, it holds that
 \begin{align}\label{norproequi}
     \vec v_1\cdot \pi_{\bn} \mko \vec v_2=(\bn\,\res \vec v_1)\cdot (\bn\,\res \vec v_2).
 \end{align}
 Write $\bII_{ij,t}=\pi_{\bn_t} \p_i\p_j\bP_t$. Since $\bn_t\,\res d\bP_t=0$, by~\eqref{vargau} we have
 \begin{align}\label{varnresII}
 \begin{aligned}
     \frac{d}{dt}\mko(\bn_t\,\res \bII_{ij,t})\mko\Big|_{t=0}
     &= \frac{d}{dt}\mko(\bn_t\,\res \p_i\p_j\bP_t)\mko\Big|_{t=0}\\
     &= -\frac{d}{dt}\mko(\p_i\mko\bn_t\,\res \p_j\bP_t)\mko\Big|_{t=0}\\
     &=-\p_i \bn \,\res \p_j \bw+\p_i\big(d\bP \wres (\bn\,\res d\bw)\mko \big)\,\res \p_j\bP.
     \end{aligned}
 \end{align}
 It follows that
\begin{align}\begin{aligned}\label{denresH}
      4\frac{d}{dt}\mko(\bn_t\,\res \bH_t)\mko\Big|_{t=0}
      &=g^{ij}  \frac{d}{dt}\mko(\bn_t\,\res \bII_{ij,t})\mko\Big|_{t=0}+(\bn \,\res \bII_{ij})\mko\frac{d}{dt} \,g_t^{ij}\mko\Big|_{t=0}\\
      &=-d\bn\, \rres d\bw+d\big(d\bP \wres (\bn\,\res d\bw) \mko\big)\rres d\bP-2\mko \bn \,\res \bII(d\bw,d\bP),
      \end{aligned}
 \end{align}
 where we write $\bII(d\bw,d\bP)\coloneq g^{ik}g^{\ell j}(\p_{\ell}\vec w \cdot\p_{k}\bP)\mkt \bII_{ij}$.
 Hence we have
 \begin{align}\label{varnresHdvol}\begin{aligned}
      4 \frac{d}{dt}\mko(\bn_t\,\res \bH_t)\mko\Big|_{t=0}\mko \dvol_g
       &=(*_g\,d\bn) \ovs{\res}{\we} d\bw-2\mko \bn \,\res \bII(d\bw,d\bP)\,\dvol_g+d\big(d\bP \wres (\bn\,\res d\bw) \mko\big) \ovs{\res}{\we}*_g\,d\bP.
       \end{aligned}
 \end{align}
 In addition, since $\bH$ is normal, using~\eqref{vargau}, we obtain
 \begin{align}\label{debnresnresH}
      \frac d{dt} \mko \bn_t \Big|_{t=0}\,\res (\bn\, \res \bH)=-\big( d\bP \wres (\bn\, \res d\bw)\mko\big)\,\res (\bn\,\res \bH)=(-1)^m \mko d\bP\, \resg (d\bw \cdot \bH).
 \end{align}
By~\eqref{resreppin}, we have
\begin{align}\label{varHt}
    (-1)^{m-1}\frac d{dt}\mko \bH_t\Big|_{t=0}&=\frac d{dt} \big(\bn_t\res (\bn_t\,\res \bH_t)\mko\big)\Big|_{t=0}=
    \frac d{dt} \mko \bn_t \Big|_{t=0}\,\res (\bn\, \res \bH)+\bn\,\res  \frac{d}{dt}\mko(\bn_t\,\res \bH_t)\mko\Big|_{t=0}.
\end{align}
Combining~\eqref{norproequi}--\eqref{varHt} with~\eqref{eq:defres}, it follows that 
\begin{align}\label{varHdotlapH}
\begin{aligned}
    &\frac{d}{dt}\mkt \bH_t\Big|_{t=0}\cdot \lap_g \bH\,\dvol_g\\
    &=-(d\bw \cdot \bH)\we (*_g\, d\bP \cdot \lap_g \bH)+  \frac{d}{dt}\mko(\bn_t\,\res \bH_t)\mko\Big|_{t=0}\mko \dvol_g\cdot (\bn\,\res \lap_g \bH)\\
    &=-(d\bw \cdot \bH)\we (*_g\, d\bP \cdot \lap_g \bH)+\frac 14\mko (*_g\,d\bn) \dwe \big(d\bw\we (\bn\,\res \lap_g\bH)\mko\big)-\frac 12\mko \bII(d\bw,d\bP)\cdot \lap_g \bH \,\dvol_g\\
    &\quad +\frac 14\mko d\big(d\bP \wres (\bn\,\res d\bw)\mko \big) \dwe\big(*_gd\bP \we (\bn\,\res \lap_g \bH)\mko\big)\\
    &=d\bw \dwe *_g \Big( -(d\bP \cdot \lap_g \bH) \bH +\frac 14(-1)^m\mko  d\bn\, \res (\bn\,\res \lap_g\bH)-\frac 12\mko (\bII^k_j \cdot \lap_g\bH)\p_k\bP\,  dx^j\Big)\\
    &\quad +\frac 14 d*_g\Big( \big(d\bP \wres (\bn\,\res d\bw)\mko\big) \cdot \big(d\bP \we (\bn\,\res \lap_g \bH)\mko\big) \Big)-\frac 14\big(d\bP \wres (\bn\,\res d\bw)\mko\big) \cdot d*_g\big(d\bP \we (\bn\,\res \lap_g \bH)\mko\big).
    \end{aligned}
\end{align}
Since $d*_g d\bP$ and $\bn\,\res d\bw$ are normal, by decomposition into tangential and normal components, we compute
\begin{align}\label{resdwddlapH}
\begin{aligned}
    &\big(d\bP \wres (\bn\,\res d\bw)\mko\big) \cdot d*_g\big(d\bP \we (\bn\,\res \lap_g \bH)\mko\big)\\
    &=- \big(d\bP \wres (\bn\,\res d\bw)\mko\big)\cdot \big(\mko (*_g\, d\bP) \ovs{\ovwe}{\we} d(\bn\,\res \lap_g\bH)\mko\big)\\
    &= \big\lan\bn\,\res d\bw, d(\bn\,\res \lap_g\bH)\big\ran_g\, \dvol_g\\
    &=(\bn\,\res d\bw) \dwe *_g \,d(\bn\,\res \lap_g\bH)\\
    &=(-1)^{m-1}d\bw \dwe *_g \big(\bn\, \res d(\bn\,\res \lap_g\bH)\mko\big)\\
    &=d\bw \dwe *_g \mkt  d\big( \pi_{\bn}\mko \lap_g \bH\big)+(-1)^{m}d\bw \dwe *_g \big(d\bn\, \res (\bn\,\res \lap_g\bH)\mko\big).
    \end{aligned}
\end{align}
We also have
\begin{align}\label{comdwdotlapH}
 \big(d\bP \wres (\bn\,\res d\bw)\mko\big) \cdot \big(d\bP \we (\bn\,\res \lap_g \bH)\mko\big)=(\bn\,\res d\bw)\cdot (\bn\,\res \lap_g\bH)=d\bw \cdot \pi_{\bn}\mko  \lap_g\bH.
\end{align}
Set 
\begin{align}
\begin{aligned}\label{defva}
    \vec l_0 &\coloneq 2\mko (d\bP \cdot \lap_g \bH) \bH -\frac 12(-1)^m d\bn\, \res (\bn\,\res \lap_g\bH)+(\bII^k_j \cdot \lap_g\bH)\p_k\bP\,  dx^j\\
    &\quad-\frac 12 \mko d\big(\pi_T\mko \lap_g\bH\big)+\frac 12 (-1)^m d\bn\, \res (\bn\,\res \lap_g\bH) -2\mko g^{\ell j} (\p_j\bH\cdot d\bH)\mko \p_\ell \bP+|d\bH|^2d\bP.
    \end{aligned}
\end{align}
Combining~\eqref{varHdotlapH}--\eqref{defva} and writing $\pi_{\bn}\lap_g\bH=\lap_g\bH-\pi_T\lap_g\bH$, we obtain
\begin{align}\label{varHdlapH}\begin{aligned}
    &-2\mko \frac{d}{dt}\mkt \bH_t\Big|_{t=0}\cdot \lap_g \bH\,\dvol_g\\
    &=d\bw\dwe *_g \Big(\frac 12 \mko d\mko \lap_g\bH+\vec l_0 +2\mko g^{\ell j} (\p_j\bH\cdot d\bH)\mko \p_\ell \bP-|d\bH|^2d\bP\Big)-\frac 12 d*_g(d\bw \cdot \pi_{\bn} \mko \lap_g\bH).
    \end{aligned}
\end{align}
By~\eqref{ptvar4d} and~\eqref{varHdlapH}, we then have
\begin{align}\label{var|dH|2vol}
\begin{aligned}
    \frac d{dt}\big(|d\bH_t|_{g_t}^2\,\dvol_{g_t}\big)\Big|_{t=0}
    &=d*_g\Big(2\mko \frac{d}{dt}\mkt \bH_t\Big|_{t=0}\cdot d\bH -\frac 12\mko d\bw \cdot \pi_{\bn} \mko \lap_g\bH \Big)+d\bw \dwe*_g \Big(\frac 12\mko d\lap_g\bH+\vec l_0\Big)\\
    &=d*_g\bigg(2\mko \frac{d}{dt}\mkt \bH_t\Big|_{t=0}\cdot d\bH -\frac 12\mko d\bw \cdot \pi_{\bn} \mko \lap_g\bH+\bw\cdot \Big(\frac 12\mko d\lap_g\bH+\vec l_0\Big) \bigg)\\
    &\quad-\bw\cdot d*_g\Big(\frac 12\mko d\lap_g\bH+\vec l_0\Big).
    \end{aligned}
\end{align}
For smooth immersions, this gives the first variation of $|d\bH|_{g}^2\,\dvol_{g}$. We now show that~\eqref{var|dH|2vol} remains valid for $\bP\in \mca I_{1,2}(B^4,\R^m)$. 
For such $\bP$, by the first equality of~\eqref{ptvar4d} and~\eqref{denresH}, the left-hand side of~\eqref{var|dH|2vol} is well-defined in $L^1\big(B^4,\bwe^4\R^4\big)$. Then by~\cite[Thm.~IV.23]{LaMaRi25}, it suffices to show that the right-hand side is well-defined in distribution. By~\eqref{*estlor} we have $\lap_g\bH\in W^{-1,2}(B^4,\R^m)$, hence $d\lap_g\bH\in W^{-2,2}(B^4,\R^m\ot\R^4)$. In addition, the pointwise estimate~\eqref{estpiTlapH} implies that $\pi_T\lap_g\bH\in L^{4/3}(B^4)$. To estimate $*_g\,d\lap_g\bH$, we take $a\in L^\nf\cap W^{2,2}(B^4)$ and $f\in L^{2}(B^4)$, and write
\begin{align*}
         a \, \p_i\p_j f=\p_i\p_j(af)-\p_i(f\,\p_j\mko a  ) -\p_j(f\,\p_i\mko a )+f\,\p_i\p_j\mko a.
\end{align*}
We have the following estimates:
\begin{align}\label{apijfW-22est}\begin{dcases}
   \big \|\p_i\p_j (af)\big\|_{W^{-2,2}(B^4)}\le \|af \|_{L^{2}(B^4)}\le \|a\|_{L^\nf(B^4)}\mko\|f\|_{L^{2}(B^4)},\\
    \big\|\p_i(f\,\p_j\mko a  )\big\|_{W^{-1,\frac 43}(B^4)}\le \|f\,\p_j\mko a  \|_{L^{\frac 43}(B^4)}\le  \|\g a\|_{L^4(B^4)}\mko \|f\|_{L^{2}(B^4)},\\
    \|f\,\p_i\p_j\mko a\|_{L^1(B^4)}\le \|f\|_{L^2(B^4)}\|\g^2 a\|_{L^2(B^4)}.
    \end{dcases}
\end{align}
Hence there exists a universal constant $C>0$ such that for all $a\in L^\nf\cap W^{2,2}(B^4)$ and $T\in W^{-2,2}+L^1(B^4)$, it holds that
\begin{equation}\label{W-22L1estaT}
    \|aT\|_{W^{-2,2}+L^1(B^4)}\le C\mko\|a\|_{L^\nf\cap W^{2,2}(B^4)}\|T\|_{W^{-2,2}+L^1(B^4)}.
\end{equation}
In particular, the operator $*_g\colon W^{-2,2}+L^1\big(B^4,\bwe^\ell \R^4\big)\to W^{-2,2}+L^1\big(B^4,\bwe^{4-\ell} \R^4\big)$ is bounded, and hence $*_g\,d\lap_g\bH\in W^{-2,2}+L^1\big(B^4,\R^m\ot \bwe^3\R^4 \big)$. Similar to~\eqref{W-22L1estaT}, we have the following inequality:
\begin{align}\label{ineL1+W-143}
    \|fT\|_{L^1+W^{-1,\frac 43}(B^4)}\le C\|f\|_{W^{1,2}(B^4)}\|T\|_{W^{-1,2}(B^4)}.
\end{align}
Since $d\bn\in W^{1,2}$, by~\eqref{prorL1W-143},~\eqref{estpiTlapH}, and~\eqref{ineL1+W-143}, we then obtain $*_g\,\vec l_0\in L^1+W^{-1,4/3}$. Using~\eqref{varnresHdvol}--\eqref{varHt}, we also have 
$$
\frac{d}{dt}\mkt \bH_t\Big|_{t=0}\in L^4(B^4,\R^m).
$$ 
It follows that~\eqref{var|dH|2vol} remains valid for $\bP\in \mca I_{1,2}(B^4,\R^m)$ satisfying~\eqref{weaimmconbP}.\\
Concerning the lower-order terms $P_s(g,\bII)$, by~\eqref{varmet} and~\eqref{varnresII}, for each $1\le s\le 8$ we show that there exist $\varrho_s\in L^{4/3}\big(B^4,TB^4\ot T^*B^4\ot (\R^m)^*\big)$ and $\vec l_s\in  L^1(B^4,\R^m\ot T^*B^4)$ such that
\begin{align}\label{condvrsls}
\begin{dcases}
(i)\, |\varrho_s|\le C(\La)\mko |\bII|^3\quad\text{a.e.}\\
(ii)\, |\vec l_s|\le C(\La)\big( |\g^2 \bn|\mko  |\bII|^2+ |\bII|^3(|\bII|+|\g g|)\mko \big)\quad \text{a.e.}\\
(iii)\,\frac d{dt}\big( P_s(g_t,\bII_t) \,\dvol_{g_t}\big)\Big|_{t=0}= d*_g \big(\varrho_s(d\bw)+\bw\cdot \vec l_s \big)-\vec w\cdot d*_g \vec l_s\quad\text{in }\mca D'\big(B^4,\bwe^4T^*B^4\big).   
\end{dcases}
\end{align}
We only present the explicit computation for $\frac d{dt} \big( P_4(g_t,\bII_t) \,\dvol_{g_t}\big)\big|_{t=0}$, and the variations of the other polynomials follow exactly in the same way. By~\eqref{eq:defres}, ~\eqref{varmet} and~\eqref{varnresII}, we compute
\begin{align}\label{varIIsqu}\begin{aligned}
    \frac d{dt}(|\bII_t|_{g_t}^2)\Big|_{t=0}&=2\mko \frac d{dt}\mko g^{ij}\Big|_{t=0} g^{k\ell}\mkt \bII_{ik}\cdot \bII_{j\ell} +2\mko g^{ij}g^{k\ell} \frac d{dt} (\bn_t\,\res \bII_{ik,t} )\Big|_{t=0}\cdot (\bn\,\res \bII_{j\ell})\\
    &=\big\lan d\bw, -4\mko g^{kj}(\bII_i^\ell\cdot \bII_{j\ell}) \p_k\bP\mkt dx^i\big\ran_g-2\mko g^{k\ell} \p_i\bn\cdot \big(\p_k\bw \we (\bn\,\res \bII_{\ell}^i)\mko\big)\\
    &\qquad+2\mko g^{ij}\lf(\p_i\big(d\bP \wres (\bn\,\res d\bw)\mko \big)\res \p_k\bP\rg) \cdot (\bn\,\res \bII_j^k)\\
    &=\big\lan  d\bw, -4\mko g^{kj}(\bII_i^\ell\cdot \bII_{j\ell}) \p_k\bP\mkt dx^i+2\mko (-1)^m \p_i\bn \,\res (\bn\,\res\bII_\ell^i)\mko dx^\ell\big\ran_g\\
    &\qquad +2\mko \Big \lan d\big( d\bP \wres (\bn\,\res d\bw)\mko\big), \p_k\bP\we (\bn\,\res \bII^k_j)\mko dx^j \Big\ran_g.
    \end{aligned}
\end{align}
Similar to~\eqref{resdwddlapH} and~\eqref{comdwdotlapH}, we have 
\begin{align}\label{compd()II}\begin{aligned}
     &\Big \lan d\big( d\bP \wres (\bn\,\res d\bw)\mko\big), \p_k\bP\we (\bn\,\res \bII^k_j)\mko dx^j \Big\ran_g |\bII|_g^2\,\dvol_g\\
     &=d*_g\Big(\big( d\bP \wres (\bn\,\res d\bw)\mko\big)\cdot \big(\p_k\bP\we (\bn\,\res \bII^k_j)\mko |\bII|_g^2\mko dx^j\big) \Big)-\big( d\bP \wres (\bn\,\res d\bw)\mko\big) \cdot d*_g\big(\p_k\bP\we (\bn\,\res \bII^k_j)\mko |\bII|_g^2 \mko dx^j\big)\\
     &=d*_g \big(\mko(\p_k\bw \cdot \bII^k_j) \mko |\bII|_g^2 \mkt dx^j\big)-(\bn \,\res d\bw) \dwe *_g\Big( \textup{div}_g\big(\p_k\bP\we (\bn\,\res \bII^k_j)\mko |\bII|_g^2 \mko dx^j\big)\,\res d\bP\Big)\\
     &=d*_g \big(\mko(\p_k\bw \cdot \bII^k_j) \mko |\bII|_g^2 \mkt dx^j\big)+(-1)^{m-1}d\bw \dwe *_g\bigg(\bn\,\res \Big( \textup{div}_g\big(\p_k\bP\we (\bn\,\res \bII^k_j)\mko |\bII|_g^2 \mko dx^j\big)\,\res d\bP\Big) \bigg).
     \end{aligned}
\end{align}
Now for any $\vec v\in \R^m$ and $1\le k\le 4$, we define $\varrho_4 (\vec v\mkt dx^k)\coloneq 2\mko (\vec v\cdot \bII_j^k)\mko  |\bII|_g^2\mkt dx^j$. We also set
\begin{align}\label{defl4}\begin{aligned}
\vec l_4&\coloneq |\bII|_g^4\, d\bP-8 \mko|\bII|_g^2\mkt g^{kj}(\bII_i^\ell\cdot \bII_{j\ell}) \p_k\bP\mkt dx^i+4\mko (-1)^m |\bII|_g^2\mkt \p_i\bn \,\res (\bn\,\res\bII_\ell^i)\mko dx^\ell\\
&\qquad +2(-1)^{m-1}\bn\,\res \Big( \textup{div}_g\big(\p_k\bP\we (\bn\,\res \bII^k_j)\mko |\bII|_g^2 \mko dx^j\big)\,\res d\bP\Big).
\end{aligned}
\end{align}
Combining~\eqref{varIIsqu}--\eqref{defl4}, we obtain
\begin{align*}
    &\frac d{dt}\big( P_4(g_t,\bII_t) \,\dvol_{g_t}\big)\Big|_{t=0}\\
    &=|\bII|_g^4 \,d\bw \dwe *_g \,d\bP+2\frac d{dt}(|\bII_t|_{g_t}^2)\Big|_{t=0}|\bII|_g^2\, \dvol_g\\
    &=d*_g\big(\varrho_4(d\bw)\mko\big)+d\bw \dwe *_g\,\vec l_4\\
    &=d*_g \big(\varrho_4(d\bw)+\bw\cdot \vec l_4 \big)-\vec w\cdot d*_g \vec l_4.
\end{align*}
As in \eqref{defV1st}, we set 
\begin{equation}\label{defVmase}
\vec V\coloneq\frac 12\mko d\lap_g\bH+\vec l_0+\sum_{s=1}^8c_s\vec l_s.
\end{equation}
Combining~\eqref{var|dH|2vol} with~\eqref{condvrsls} and using integration by parts then yields
 \begin{equation}\label{varEfor}
  \frac d{dt}\, E_{\bc}(\bP_t)\Big|_{t=0}=-\mko\big\lan d*_g \vec V,\bw \big\ran,
\end{equation}
 where $\big\lan\cdot,\cdot\big\ran$ denotes the canonical pairing between $\mca D'\big(B^4,\R^m\ot \bwe^4 \R^4\big)$ and $C_c^\nf(B^4,\R^m)$. This completes the proof.
\end{proof}

\subsubsection*{Rescaling.}
Let $\bP\in \mathcal{I}_{1,2}(B^4, \R^m)$ be a weak critical point of $E$ satisfying~\eqref{weaimmconbP}. Then by the Sobolev--Lorentz embedding, we have $\bP\in W^{2,(4,2)}(B^4,\R^m)$. For $\rho\in (0,1)$, define $\bP_\rho\in \mathcal{I}_{1,2}(B^4, \R^m)$ by $\bP_\rho(x)\coloneq \rho^{-1}\bP(\rho\mko x)$.
Then $\bP_\rho$ is also a critical point of $E$ since $E$ is invariant under rescaling. In addition, the condition~\eqref{weaimmconbP} remains valid if $\bP$ is replaced by $\bP_\rho$, and for a.e. $x\in B^4$, we have
\begin{align}\label{dilgnH}
    g_{ij,{\bP_\rho}}(x)=g_{ij,{\bP}}(\rho\mko x),\qquad \bn_{\bP_\rho}(x)=\bn_{\bP} (\rho\mko x),\qquad \bH_{\bP_\rho} (x)=\rho \mko \bH_{\bP} (\rho\mko x).
\end{align}
Consequently, by change of variables,
\begin{align}\label{gggndilinv}\begin{aligned}
   & \|\g g_{\bP_\rho}\|_{L^{4,2}(B^4)}=\|\g g_{\bP}\|_{L^{4,2}(B_\rho(0))},\qquad \|\g^2 g_{\bP_\rho}\|_{L^2(B^4)}=\|\g^2 g_{\bP}\|_{L^2(B_\rho(0))},\\
    &\|\g \bn_{\bP_\rho}\|_{L^{4,2}(B^4)}=  \|\g\bn_{\bP}\|_{L^{4,2}(B_\rho(0))},\qquad \|\g^2\bn_{\bP_\rho}\|_{L^2(B^4)}=\|\g^2 \bn_{\bP}\|_{L^2(B_\rho(0))}.
    \end{aligned}
\end{align}
Thus by dilation and restriction to small enough balls, we may without loss of generality assume 
\begin{align}\label{defep_0}
    \big\||\g g|+|\g \bn|\big\|_{L^{4,2}(B^4)}+\big\||\g^2 g|+|\g^2 \bn|\big\|_{L^2(B^4)}\le 1.
\end{align} 
Then it follows from Remark~\ref{rm:g^2Pdecom} that
\begin{align}\label{gHL2bd}
    \|\g^2\bP\|_{L^{4,2}(B^4)}+\|\g^3 \bP\|_{L^2(B^4)}\le C(\La).
\end{align}
\subsubsection*{Estimates of $\vec V$.}
By~\eqref{defep_0} and~\eqref{gHL2bd}, we have 
\begin{equation}\label{dpdHdHL1}
    \big\|g^{\ell j} (\p_j\bH\cdot d\bH)\mko \p_\ell \bP\big\|_{L^1(B^4)}+\big\|\mkt|d\bH|^2d\bP\big\|_{L^1(B^4)}
    \le C(\La)\|\g \bH\|_{L^2(B^4)}^2
    \le C(\La)\|\g \bH\|_{L^2(B^4)}.
\end{equation}
Applying the inequality~\eqref{*estlor}, we also obtain
\begin{align}\label{estlapH}
    \|\lap_g\bH\|_{W^{-1,2}(B^4)}\le C(\La) \|\g \bH\|_{L^2(B^4)}.
\end{align}
Moreover, combining the pointwise estimates~\eqref{gpHdotpp}--\eqref{estpiTlapH} with~\eqref{gHL2bd} implies that 
\begin{align*} 
    \|\pi_T\lap_g\bH\|_{L^{\frac 43}(B^4)}&\le C(\La)\Big( \|\g^3\bP\|_{L^2(B^4)}\|\bH\|_{L^4(B^4)}+\|\g\bH\|_{L^2(B^4)}\big\||\g g|+|\bII|\big\|_{L^4(B^4)}\Big)\\
    &\le C(\La)\big(\|\g\bH\|_{L^2(B^4)}+\|\bII\|_{L^4(B^4)}\big).
\end{align*}
By the inequality~\eqref{ineL1+W-143} and the definition~\eqref{defva} of $\vec l_0$, it follows that
\begin{align}\label{estl0}
    \|\vec l_0\|_{L^1+W^{-1,\frac 43}(B^4)}\le C(\La) \big(\|\g\bH\|_{L^2(B^4)}+\|\bII\|_{L^4(B^4)}\big).
\end{align}
The embedding $W^{-1,\frac 43}(B^4)\hookrightarrow W^{-2,2}(B^4)$ (see Lemma~\ref{lm:LpembW-1}) then gives 
\begin{align*}
    \|\vec l_0\|_{L^1+W^{-2,2}(B^4)}\le C(\La) \big(\|\g\bH\|_{L^2(B^4)}+\|\bII\|_{L^4(B^4)}\big).
\end{align*}
  In addition, by~\eqref{condvrsls} and~\eqref{defep_0}, for each $1\le s\le 8$, we have 
  \begin{align}\label{estls}
      \|\vec l_s\|_{L^1(B^4)}\le C(\La)\|\bII\|_{L^4(B^4)}.
  \end{align}
Combining~\eqref{estlapH}--\eqref{estls} with the definition~\eqref{defVmase} of $\vec V$, we obtain that
\begin{gather}\label{est-V}
\|\vec V\|_ {W^{-2,2}+L^1(B^4)}\le C(\La,\bc)\big(\|\g \bH\|_{L^2(B^4)}+\|\bII\|_{L^4(B^4)}\big).\end{gather}
Then it follows from~\eqref{W-22L1estaT}, \eqref{defep_0}, and~\eqref{est-V} that
\begin{align}\label{*VW-22+L1est}
    \|*_g \vec V\|_{W^{-2,2}+L^1(B^4)}&\le C(\La)\mko\|g\|_{W^{2,2}(B^4)} \|\vec V\|_{W^{-2,2}+L^1(B^4)}
    \le C(\La,\bc)\big(\|\g \bH\|_{L^2(B^4)}+\|\bII\|_{L^4(B^4)}\big).
\end{align}
Using the embeddings $L^1(B^4)\hookrightarrow W^{-1,(4/3,\nf)}(B^4)\hookrightarrow W^{-2,(2,\nf)}(B^4)$ (see Lemma~\ref{lm:LpembW-1}), we obtain
\begin{align*}
     \|*_g \vec V\|_{W^{-2,(2,\nf)}(B^4)}\le C(\La,\bc)\big(\|\g \bH\|_{L^2(B^4)}+\|\bII\|_{L^4(B^4)}\big).
\end{align*}
\subsection{The Noether currents associated to dilations and rotations}\label{sec:conlaws}
By Lemma~\ref{lm-critic-W^{1,1}}, for a weak critical point $\bP$ of $E$, we have $d *_g\vec V=0$. Hence by Corollary~\ref{co-Hod-decw-1p} and~\eqref{*estlor}, there exists $\bL\in W^{-1,(2,\nf)}\big(B^4,\R^m\ot \bwe^2 \R^4\big)$ such that
\begin{subnumcases}{}
    d*_g\bL=*_g\,\vec V, \label{d*gL=*V}\\[0.3ex]
    d\bL=0.\label{dL0=0}
\end{subnumcases}
Moreover, we have the estimate
\begin{align}
\begin{aligned}
    \label{est-L_0}
    \|\bL\|_{W^{-1,(2,\infty)}(B^4)}&\le C(\La)\|*_g\bL\|_{W^{-1,(2,\infty)}(B^4)}\big(1+ \|\g g\|_{L^4(B^4)}\big)\\
    &\le C(\La) \|*_g\vec V\|_{W^{-2,(2,\infty)}(B^4)}\\
    &\le C(\La,\bc)\big(\|\g \bH\|_{L^2(B^4)}+\|\bII\|_{L^4(B^4)}\big).
\end{aligned}
\end{align}
 \begin{Rm}\label{rm*W-22nf}
     Under our assumption $g\in L^\nf\cap W^{2,2}(B^4,\R^{4\times 4}_{\sym})$ is uniformly elliptic, in general we cannot solve the equation $d^{*_g} \bL=\vec V$ (equivalent to \eqref{d*gL=*V}) with the inequality
     \begin{align*}
     \|\bL\|_{W^{-1,(2,\infty)}(B^4)}\le C\| \vec V \|_{W^{-2,(2,\infty)}(B^4)}.
     \end{align*}
     This is because for $a\in  L^\nf\cap W^{2,2}(B^4)$ and $T\in W^{-2,(2,\infty)}(B^4)$, in general we do not have $a\mko T\in W^{-2,(2,\nf)}(B^4)$, and hence $*_g$ may not be bounded on $W^{-2,(2,\nf)}(B^4,\bwe\R^4)$. For instance, we take $a=\sin\big(\log\log\frac {e}{|x|}\big)$ and $f(x)=|x|^{-2}$. Then we have $a\in L^\nf\cap W^{2,2}(B^4) $ and $f\in L^{2,\nf}(B^4)$. We write formally \begin{equation}\label{sym*W-22nf}
         a \, \p_i\p_j f=\p_i\p_j(af)-\p_i(f\,\p_j\mko a  ) -\p_j(f\,\p_i\mko a )+f\,\p_i\p_j\mko a.
     \end{equation}
And we have \begin{align*}\begin{dcases}
   \big \|\p_i\p_j (af)\big\|_{W^{-2,(2,\nf)}(B^4)}\le \|af \|_{L^{2,\nf}(B^4)}\le \|a\|_{L^\nf(B^4)}\mko\|f\|_{L^{2,\nf}(B^4)},\\
    \big\|\p_i(f\,\p_j\mko a  )\big\|_{W^{-1,(\frac 43,\nf)}(B^4)}\le \|f\,\p_j\mko a  \|_{L^{\frac 43,\nf}(B^4)}\le C \|\g a\|_{L^4(B^4)}\mko \|f\|_{L^{2,\nf}(B^4)}.
    \end{dcases}
\end{align*}
However, for all $1\le i,j\le 4$, the last term $f\,\p_i\p_j\mko a$ in \eqref{sym*W-22nf} is not locally integrable in $B^4$, and hence $  a \, \p_i\p_j f$ is not well-defined in $W^{-2,(2,\nf)}(B^4)$.
If we assume in addition that $a\in W^{2,(2,1)}(B^4)$, then since $L^{2,\nf}(B^4)=\big(L^{2,1}(B^4)\big)^*$, for $f\in L^{2,\nf}(B^4)$ we have
\begin{align*}
     \|f\,\p_i\p_j\mko a\|_{W^{-2,(2,\nf)}(B^4)}\le C\mko  \|f\,\p_{i}\p_{j}\mko a\|_{L^1(B^4)}\le C\mko\|\g^2 a\|_{L^{2,1}(B^4)}\mko \|f\|_{L^{2,\nf}(B^4)}.
\end{align*}
It follows that $*_g$ is a bounded operator on $W^{-2,(2,\nf)}(B^4,\bwe \R^4)$ if we assume $g\in W^{2,(2,1)}(B^4,\R^{4\times 4}_{\sym})$ in addition to the previous assumptions.
\end{Rm}

Since $\Big(|d\bH|_g^2+\sum_{s=1}^8 c_s P_s(g,\bII)\Big)\dvol_g$ is pointwise invariant under dilations and rotations, Noether's theorem yields the corresponding conservation laws, which we now establish.
\begin{Lm}
\label{lm-dilations}
Let $\bP$ satisfying~\eqref{weaimmconbP} be a weak critical point of $E$, and define $\bL$ as in~\eqref{d*gL=*V}--\eqref{est-L_0}. Then there exist $\vt_{\dil}\in L^{4/3}(B^4,\R^4)$ and $\bvt_{\rot}\in L^{4/3}\big(B^4,\bwe^2\R^m\ot \R^4 \big)$ such that
\begin{subnumcases}{}
d*_g\big( \vec{L} \,\dres  d\bP+\vt_{\dil}\mko\big)=0,\label{cons-law-dilation}\\[1ex]
d*_g \Big(\vec L \wres d\vec\Phi +\frac 12\mko d\bP\we \pi_{\bn}\mko\lap_g \bH+\bvt_{\rot} \Big)=0.\label{cons-law-rotation}
\end{subnumcases}
In addition, the following pointwise bound holds a.e. in $B^4$:
\begin{align}\label{ptbdvts}
    |\vt_{\dil}|+|\bvt_{\rot}|\le C(\La,\bc)\big(|\bII|^3+(|\bII|+|\g g|) |\g \bH| \big).
\end{align}
\end{Lm}
\begin{proof}
To prove~\eqref{cons-law-dilation}, we first consider the variation $\bP_t=(1+t)\bP$ with 
$$
    \bw=\frac d{dt}\mko \bP_t\Big|_{t=0}=\bP.
$$
Denote $\bH_t$, $\bII_t$ and $g_t$ as in the proof of Lemma~\ref{lm-critic-W^{1,1}}. Since $\big(|d\bH_t|_g^2+\sum_{s=1}^8 c_s P_s(g_t,\bII_t)\mko\big)\dvol_{g_t}$ is independent of $t$, using~\eqref{var|dH|2vol} and~\eqref{condvrsls}, we obtain that
\begin{align}\label{dilconcomp1}
\begin{aligned}
    0&=   \frac d{dt}\bigg(\Big(|d\bH_t|_g^2+\sum_{s=1}^8 c_s P_s(g_t,\bII_t)\Big)\dvol_{g_t}\bigg)\bigg|_{t=0}\\
    &=d*_g\bigg(2\mko \frac{d}{dt}\mkt \bH_t\Big|_{t=0}\cdot d\bH -\frac 12\mko d\bw \cdot \pi_{\bn} \mko \lap_g\bH+\sum_{s=1}^8c_s\mko\varrho_s(d\bw) +\bw\cdot \vec V\bigg).
    \end{aligned}
\end{align}
Since $d\bP\cdot \pi_{\bn} \mko \lap_g\bH=0$, taking $\bw=\bP$ in~\eqref{dilconcomp1} gives
\begin{align}\label{conlawdil1vers}
    d*_g\bigg(2\mko \frac{d}{dt}\mkt \bH_t\Big|_{t=0}\cdot d\bH+\sum_{s=1}^8c_s\mko\varrho_s(d\bP) +\bP\cdot \vec V\bigg)=0.
\end{align}By~\eqref{d*gL=*V} and~\eqref{*_gcomres}, we have
\begin{align}\label{d*(PV)}
    d*_g(\bP\cdot \vec V)=d(\bP\cdot d*_g \bL)=d\bP\dwe d*_g \bL=-d\big( \mko(*_g\,\bL)\dwe d\bP\big)=d*_g(\vec{L} \,\dres  d\bP).
\end{align}
Set 
\begin{equation}\label{defvtdil}
    \vt_{\dil}\coloneq 2\mko \frac{d}{dt}\mkt \bH_t\Big|_{t=0}\cdot d\bH+\sum_{s=1}^8c_s\mko\varrho_s(d\bP).
\end{equation}
The conservation law~\eqref{cons-law-dilation} then follows by combining~\eqref{conlawdil1vers}--\eqref{defvtdil}. To estimate $\vt_{\dil}$, taking $\bw=\bP$ in~\eqref{denresH} and~\eqref{varHt} yields the pointwise bound
\begin{align}\label{ptbddtH}
    \Big|\frac{d}{dt}\mkt \bH_t\Big|_{t=0}\Big|\le C(\La)|\bII| \qquad \text{a.e. in }B^4.
\end{align}
Combining~\eqref{defvtdil}--\eqref{ptbddtH} with~\eqref{condvrsls}, we obtain that
\begin{align}\label{estvtdil}
    |\vt_{\dil}|\le C(\La,\bc)\big(|\bII|^3+|\bII|\mko |\g \bH| \big) \qquad \text{a.e. in }B^4.
\end{align}
Next, we prove~\eqref{cons-law-rotation}.  For $\vec a\in \bwe^2\R^m$ we define $\bP_t$ by
\begin{align}\label{varrot}
  \begin{dcases} 
  \frac d{dt}\mko \bP_t=\vec a\,\res \bP_t, 
  \qquad t\in \R,\\[0.3ex]
  \bP_0=\bP.
  \end{dcases}
\end{align} 
For this variation, since $ \frac d{dt}\mko \bP_t\cdot \bP_t=\vec a \cdot (\bP_t\we \bP_t)=0$, we have $|\bP_t|^2=|\bP|^2$ for all $t\in \R$. In addition, $\bP_t$ depends linearly on $\bP$.
Hence there exists $Q_t\in \text{SO}(m)$, depending only on $\vec a$ and $t$, such that $\bP_t=Q_t\circ \bP$ on $B^4$. Consequently, the expression $\big(|d\bH_t|_g^2+\sum_{s=1}^8 c_s P_s(g_t,\bII_t)\mko\big)\dvol_{g_t}$ is independent of $t$, and the equation~\eqref{dilconcomp1} remains valid if we take $\bw=\vec a\,\res \bP$. Identifying $(\R^m)^*=\R^m$ and using~\eqref{eq:defres}, we obtain that 
\begin{align}\label{rotvps}
    \varrho_s\big(d(\vec a\,\res \bP)\mko\big)=\varrho_s(\vec a\,\res d\bP)=(\vec a\,\res \p_i\bP)\cdot \varrho_s(dx^i)=\vec a\cdot \big(\p_i\bP\we \varrho_s(dx^i)\mko\big).
\end{align}
Similarly, we have
\begin{align}\label{dwdotpnlapH}
    d(\vec a\,\res \bP) \cdot \pi_{\bn} \mko \lap_g\bH=\vec a\cdot (d\bP \we \pi_{\bn} \mko \lap_g\bH).
\end{align}
By applying~\eqref{eq:defres} again to the equations~\eqref{var|dH|2vol} and~\eqref{condvrsls}, for the variation~\eqref{varrot} we can write
\begin{align}
    2\mko \frac{d}{dt}\mkt \bH_t\Big|_{t=0}\cdot d\bH +\sum_{s=1}^8c_s\mko\varrho_s\big(d(\vec a\,\res \bP)\mko\big)=-\mko \vec a\cdot \bvt_{\rot},
\end{align}
where $\bvt_{\rot}$ is independent of $\vec a$, and by~\eqref{get<II+gg} we have
\begin{align}\label{estvtrot}
     |\bvt_{\rot}|\le C(\La,\bc)\big(|\bII|^3+(|\bII|+|\g g|) |\g \bH| \big) \qquad \text{a.e. in }B^4.
\end{align}
Finally, we argue as in~\eqref{d*(PV)}:
\begin{align}\label{rotd*aV}
    d*_g\big(\mko(\vec a\,\res \bP)\cdot \vec V\big)=\vec a\cdot d*_g(\bP\we \vec V)=\vec a\cdot d(\bP\we d*_g\bL)=\vec a\cdot d\big(\mko(*_g\,\bL) \ovs{\ovwe}{\we} d\bP\big)=-\mko\vec a\cdot d*_g\big(\bL \wres d\bP\big).
\end{align}
The conservation law~\eqref{cons-law-rotation} then follows by combining~\eqref{dwdotpnlapH}--\eqref{rotd*aV} and using that $\vec a$ is arbitrary. This completes the proof.
\end{proof}
Define $\bL$ as in~\eqref{d*gL=*V}--\eqref{est-L_0}. Then by~\eqref{dL0=0}, we have
\begin{align}
\begin{dcases}\label{dLwedbp}
    d(\vec L\,\dot \we\, d\bP)=d\vec L \dwe d\bP=0,\\[0.5ex]
    d\big(\bL \ovs{\ovwe}{\we} d\bP\big)=d\bL \ovs{\ovwe}{\we} d\bP=0.
    \end{dcases}
    \end{align}
Invoking \eqref{prolorgen} and~\eqref{defep_0}, we estimate 
\begin{align}\label{estLwedplapH}
    \begin{dcases}
       \big\|\vec{L} \,\dres  d\bP\big\|_{W^{-1,(2,\nf)}(B^4)}+ \big\|\vec L \wres d\vec\Phi\big\|_{W^{-1,(2,\nf)}(B^4)}\le C(\La) \|\bL \|_{W^{-1,(2,\nf)}(B^4)},\\
   \big\|\bL\dwe d\bP\big\|_{W^{-1,(2,\nf)}(B^4)}+ \big\|\vec L \ovs{\ovwe}{\we} d\vec\Phi\big\|_{W^{-1,(2,\nf)}(B^4)}\le C(\La) \|\bL \|_{W^{-1,(2,\nf)}(B^4)},\\
        \big\|d\bP\we \pi_{\bn}\mko\lap_g \bH\big\|_{W^{-1,2}(B^4)}\le C(\La) \big\|\lap_g \bH\big\|_{W^{-1,2}(B^4)}\le C(\La)\|\g \bH\|_{L^2(B^4)}.
    \end{dcases}
\end{align}
By~\eqref{ptbdvts} and~\eqref{defep_0}, together with the embedding $L^{4/3}(B^4)\hookrightarrow W^{-1,2}(B^4)$, we also have
\begin{align}\label{estvts}
    \|\vt_{\dil}\|_{W^{-1,2}(B^4)}+\|\bvt_{\rot}\|_{W^{-1,2}(B^4)}\le C\big\| |\vt_{\dil}|+|\bvt_{\rot}|\big\|_{L^{\frac 43}(B^4)}\le C(\La,\bc) \big(\|\g \bH\|_{L^2(B^4)}+\|\bII\|_{L^4(B^4)}\big).
\end{align}
Combining the equations~\eqref{cons-law-dilation},~\eqref{cons-law-rotation}, and~\eqref{dLwedbp} with Corollary \ref{Co:Hod-declp}, we obtain $S\in L^{2,\infty}_{\loc}\big(B^4,\bwe ^2 \R^4\big)$ and $\bR\in L^{2,\infty}_{\loc}\big(B^4,\bwe^2\R^m\ot\bwe ^2 \R^4\big)$ such that, in $\mca D'(B^4)$ we have
    \begin{align*}
    \begin{dcases}
        d^{*_g}S=\vec{L} \,\dres  d\bP+\vt_{\dil},\quad &dS=-2\mko \bL \dwe d\bP,\\[0.5ex]
        d^{*_g}\bR=\vec L \wres d\vec\Phi +\frac 12\mko d\bP\we \pi_{\bn}\mko\lap_g \bH+\bvt_{\rot},\quad &    d\bR= -2\mko \bL\ovs{\ovwe}{\we} d\bP.
    \end{dcases}
    \end{align*}
    Moreover, applying the estimates~\eqref{estsiLpq},~\eqref{est-L_0}, and~\eqref{estLwedplapH}--\eqref{estvts} yields that
    \begin{gather*}
        \|S\|_{L^{2,\infty}(B_{1/2}(0))}+ \|\bR\|_{L^{2,\infty}(B_{1/2}(0))}\le C(\La,\bc)\big(\|\g \bH\|_{L^2(B^4)}+\|\bII\|_{L^4(B^4)}\big).
    \end{gather*}

We summarize our results of Sections~\ref{sec:EullagestV}--\ref{sec:conlaws} in the following theorem.
\begin{Prop}\label{sysestLSR}
Assume $\bP\in \mca I_{1,2}(B^4,\R^m)$ is a weak critical point of the functional 
$$
    E_{\bc}(\vec{\Phi})=\int_{B^4}\bigg(|d\bH|_g^2+\sum_{s=1}^8 c_s P_s(g,\bII)\bigg)\dvol_g. 
$$
Then there exist $\bL\in W^{-1,(2,\nf)}\big(B^4,\R^m\ot \bwe^2 \R^4\big)$, $S\in L^{2,\nf}_{\loc}\big(B^4,\bwe^2\R^4\big)$, $\vec R\in L^{2,\nf}_{\loc}\big(B^4,\bwe^2 \R^m\ot \bwe^2\R^4\big)$,  $\vec l_0\in W^{-1,\frac 43}+L^1(B^4,\R^m\ot \R^4)$, $\vec l_s\in L^1(B^4,\R^m\ot \R^4)$ ($1\le s\le 8$), $\vt_{\dil}\in L^{4/3}(B^4,\R^4)$, and $\bvt_{\rot}\in L^{4/3}\big(B^4,\bwe^2 \R^m\ot \R^4\big)$ such that the following system holds in $\mca D'(B^4)$:
\begin{equation}\label{sys-LRS}
    \begin{dcases}
        d *_g \bL=*_g\bigg(\frac 12\mko d\lap_g\bH+\vec l_0+\sum_{s=1}^8c_s\vec l_s\bigg),\\
        d^{*_g} S=\vec{L} \,\dres  d\bP+\vt_{\dil},\\
           d^{*_g} \vec R = \vec L \wres d\vec\Phi +\frac 12\mko d\bP\we \pi_{\bn}\mko\lap_g \bH+\bvt_{\rot},\\[0.5ex]
           d\bL=0,\\[0.5ex]
       dS=-2\mko \vec L\,\dot \we\, d\bP,\\[0.5ex]
        d \vec R = -2\mko \vec L \ovs{\ovwe}\we d\vec\Phi.
\end{dcases}
\end{equation}
Moreover, assuming that~\eqref{weaimmconbP} and~\eqref{defep_0} hold, then $\vt_{\dil}$, $\vec\vt_{\rot}$, $\vec L$, $S$, and $\bR$ can be chosen such that the following estimates hold:
\begin{align}\label{estLSR}
    & \|\bL\|_{W^{-1,(2,\infty)}(B^4)}+ \|S\|_{L^{2,\infty}(B_{1/2}(0))}+ \|\bR\|_{L^{2,\infty}(B_{1/2}(0))}\le C(\La,\bc)\left(\|\g \bH\|_{L^2(B^4)}+\|\bII\|_{L^4(B^4)}\right),\\[2mm]
     & |\vt_{\dil}|+|\bvt_{\rot}|\le C(\La,\bc)\left(|\bII|^3+\big( |\bII|+|\g g| \big)\, |\g \bH| \right)\quad \text{a.e. in }B^4.\label{ptbdvts2}
\end{align}
\end{Prop}
\subsection{Regularity}\label{sec:4dreg}
\begin{Th}\label{th:lapHL432}
    Suppose $\vec{\Phi}\in \mathcal{I}_{1,2}(B^4, \R^m)$ is a weak critical point of $E$ and satisfies~\eqref{weaimmconbP}. Then there exists $\vae_0=\vae_0(\La)>0$ such that for any $\al\in (0,3)$ and all $r\in(0,\frac 14]$, the following holds: Assume that
    \begin{align}\label{smaIIggvae}
      \big\||\g g|+|\g \bn|\big\|_{L^{4,2}(B^4)}+\big\||\g^2 g|+|\g^2 \bn|\big\|_{L^2(B^4)}\le \vae_0.
    \end{align}
    Then we have $\lap_g \bH\in L^{4/3,2}_{\loc}(B^4)$ with the estimate
\begin{align}\label{est:lapHL432}\begin{aligned}
        &\|\lap_g \bH\|_{L^{\frac 43,2}(B_r(0))}\\
        &\le C(\La,\bc,\al)\big( \big\||\g g|+|\g \bn|\big\|_{L^{4,2}(B^4)}+\big\||\g^2 g|+|\g^2 \bn|\big\|_{L^2(B^4)}+r^{\al}\big)\big(\|\g \bH\|_{L^2(B^4)}+\|\bII\|_{L^4(B^4)}\big).
        \end{aligned}
    \end{align}
\end{Th}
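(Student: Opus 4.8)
The plan is to combine the conservation laws of Section~\ref{sec:conlaws} with the structural identities of Section~\ref{sec:strid} to obtain a second-order elliptic system for $\vec u$ (defined in~\eqref{defuetabul}) and for $H$, and then to run a bootstrap argument using the Calder\'on--Zygmund type estimates of Lemma~\ref{lm:ellcacciolp}. Concretely, having $\bL\in W^{-1,(2,\nf)}(B^4,\R^5\ot\bwe^2\R^4)$ from~\eqref{d*gL=*V}--\eqref{est-L_0}, $S$ from~\eqref{def-S}--\eqref{estSL2nf}, and $\bR$ from~\eqref{def-R}--\eqref{estRL2nf}, one checks that the hypotheses~\eqref{sys:d*dSR} of Proposition~\ref{prop:III3-1rev} are satisfied (modulo the extra $-d(H^2)$ term, which is accounted for), so Corollary~\ref{co:sysvecu} applies. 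This gives
\begin{align*}
d*_g d\vec u = d*_g\vec{\mca R}_2,\qquad \bn\cdot d^{*_g}\big(\vec u\ovs{\res}{\we}d\bP\big)=\lap_g H+\mca R_1,
\end{align*}
with the pointwise bound~\eqref{remtotptbd}: $|\mca R_1|+|\vec{\mca R}_2|\le C(\La)(|\II|+|\g g|)(|\bR|+|S|+|\g H|)$.

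The first equation $d*_g d\vec u=d*_g\vec{\mca R}_2$ says that $*_g d\vec u-*_g\vec{\mca R}_2$ is $d^{*_g}$-closed; written in the $g$-orthonormal coframe and in coordinates, each scalar component of $\vec u$ then solves a divergence-form elliptic equation $\p_j(a^{ij}\p_i\vec u^{(\cdot)})=\mathrm{div}(\vec f)$ of exactly the type handled in Lemma~\ref{lm:ellcacciolp}, where the right-hand side $\vec f$ is built from $\vec{\mca R}_2$ together with lower-order terms coming from the non-constancy of the frame (bounded by $(|\II|+|\g g|)|\vec u|$). Using~\eqref{smaIIggvae}, the H\"older-in-Lorentz inequality (Lemma~\ref{lm:lorHold}), and the embeddings of Lemma~\ref{lm:LpembW-1}, one estimates $\|\vec f\|_{L^{4/3,2}(B^4)}\le C(\La)\,\vae_0\,(\|\bR\|_{L^{2,\infty}}+\|S\|_{L^{2,\infty}}+\|\g H\|_{L^2})$ once one knows $\vec u\in L^{2,\infty}_{\loc}$ — which follows from~\eqref{defuetabul}, $\vet\in L^\nf\cap W^{1,4}$, $\bR,S\in L^{2,\infty}_{\loc}$, $\bn\in L^\nf$, and $d\bP\,\resg dH\in L^2$. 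Then~\eqref{eq:ellcacciop1} of Lemma~\ref{lm:ellcacciolp} yields $\g\vec u\in L^{4/3,2}_{\loc}(B^4)$ with $\|\g\vec u\|_{L^{4/3,2}(B_r)}\le C(\La,\al)(\vae_0+r^\al)(\|H\|_{L^4}+\|\g H\|_{L^2})$ for $r\in(0,1/4]$, after rescaling to absorb the $r^{\al}\|\vec u\|_{L^{2,\infty}}$ contribution and using the global bounds~\eqref{estSL2nf},~\eqref{estRL2nf},~\eqref{gHL2bd}. Feeding $\g\vec u\in L^{4/3,2}_{\loc}$ together with the pointwise bound on $\mca R_1$ into the second identity $\lap_g H=\bn\cdot d^{*_g}(\vec u\ovs{\res}{\we}d\bP)-\mca R_1$, and using $\bn,d\bP\in L^\nf\cap W^{1,4}$, $\mca R_1\in L^{4/3,2}$ (again by Lemma~\ref{lm:lorHold} since $(|\II|+|\g g|)\in L^{4,2}$ and $|\bR|+|S|+|\g H|\in L^2\subset L^{2}$, giving the product in $L^{4/3}$, and the Lorentz refinement needs only $|\II|+|\g g|\in L^{4,2}$), one obtains $\lap_g H\in L^{4/3,2}_{\loc}(B^4)$ with the claimed bound~\eqref{est:lapHL432}.

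The main obstacle, and the step requiring the most care, is the manipulation $d*_g d\vec u=d*_g\vec{\mca R}_2 \Longrightarrow$ an \emph{honest} divergence-form elliptic system for the components of $\vec u$ with coefficients satisfying the VMO-type smallness~\eqref{dabcwbound} and right-hand side of the correct Lorentz integrability. This is delicate for two reasons: first, $*_g$ does not extend continuously to negative Sobolev spaces of low enough order (Remark~\ref{rm*W-22nf}), so one must keep the identity at the level where $d*_g$ rather than $d^{*_g}$ is applied, and only pass to components after establishing enough regularity; second, the passage from the coordinate coframe $(dx^i)$ (in which Lemma~\ref{lm:ellcacciolp} is phrased) to the $g$-orthonormal coframe $(e^i)$ (in which~\eqref{forcomnot} and the structural identities are written) produces frame-derivative terms $\p e^i$, which are controlled only in $L^4$ by~\eqref{gra-sch-4d}; these must be grouped into the first-order coefficients $b,c$ and the zeroth-order term, and one must verify that their $L^4$/$L^2$ norms on small balls are dominated by $\om(r)$ so that the hypotheses of Lemma~\ref{lm:ellcacciolp} hold. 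A secondary technical point is the bookkeeping of the Lorentz exponents: one must track that each product lands in $L^{4/3,2}$ (not merely $L^{4/3,\nf}$), which forces one to use that $|\g g|,|\g\bn|\in L^{4,2}$ (not just $L^{4,\nf}$) in~\eqref{smaIIggvae}, paired against $L^2$ factors via Lemma~\ref{lm:lorHold} with $1/(4/3)=1/4+1/2$ and $1/2\le 1/2+1/\nf$.
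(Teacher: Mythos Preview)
Your approach is correct and matches the paper's. The one place you are overcomplicating things is precisely what you flag as ``the main obstacle'': since $\vec u$ defined in~\eqref{defuetabul} is a \emph{$0$-form} (a function with values in $\bwe^2\R^5$), the equation $d*_g d\vec u=d*_g\vec{\mca R}_2$ is already, component by component, the scalar divergence-form equation
\[
\p_j\big((\det g)^{1/2}g^{ij}\,\p_i\vec u\big)\,dx^1\we\cdots\we dx^4 = d*_g\vec{\mca R}_2,
\]
so Lemma~\ref{lm:ellcacciolp} applies directly with $a^{ij}=(\det g)^{1/2}g^{ij}$ and $\vec f$ the coordinate components of $*_g\vec{\mca R}_2$; no passage to a $g$-orthonormal coframe is needed, and there are no frame-derivative lower-order terms to absorb. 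Also, $\bR,S$ lie only in $L^{2,\infty}_{\loc}$ (not $L^2$) by~\eqref{estSL2nf}--\eqref{estRL2nf}, but the Lorentz--H\"older pairing $L^{4,2}\cdot L^{2,\infty}\hookrightarrow L^{4/3,2}$ still gives $\mca R_1,\vec{\mca R}_2\in L^{4/3,2}$ as required.
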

\begin{proof}
  Assume \eqref{smaIIggvae} holds with $\vae_0\in(0,1)$ to be determined below. As in~\eqref{defuetabul}, set \begin{align}\label{defuetabul2}
        \vec u\coloneq\vet \,\ovs{\sbul}{\res}_g \vec R + \vet \,\resg S+3\mko d\bH \wres d\bP.
    \end{align}
By Corollary~\ref{co:sysvecu} and~\eqref{sys-LRS}, in $\mca D'\big(B^4,\bwe^2 \R^m \ot \bwe^4 \R^4\big)$ and $\mca D'(B^4,\R^m)$ respectively, we have
\begin{align}
    &d*_gd\vec u=d*_g \vec{\mca R}_3,\label{eq:d*du=d*R3+2}\\[0.2ex]
        &\pi_{\bn}\mkt d^{*_g}( \vec u \,\res d\bP)=\lap_g \bH+ \vec{\mca R}_4.\label{eq:ndotd*uresdp2}
    \end{align}
Moreover, combining~\eqref{remtotptbd},~\eqref{gHL2bd},~\eqref{estLSR}, and~\eqref{ptbdvts2} with Lemma~\ref{lm:lorHold}, we obtain
\begin{align}\label{estRL432}\begin{aligned}
   &\big\|\mko |\vec{\mca R}_3|+|\vec{\mca R}_4|\big\|_{L^{\frac 43,2}(B_{1/2}(0))}\\
   &\le C(\La,\bc)\big(\|\g^2\bP\|_{L^{4,2}(B^4)}+\|\g^3\bP\|_{L^2(B^4)} \big) \big( \big\||\bR|+|S|+|\g \bH|\big\|_{L^{2,\nf}(B_{1/2}(0))}+\|\bH\|_{L^4(B^4)}\big)\\
   &\le C(\La,\bc)\big(\|\g^2\bP\|_{L^{4,2}(B^4)}+\|\g^3\bP\|_{L^2(B^4)}\big)\big(\|\g \bH\|_{L^2(B^4)}+\|\bII\|_{L^4(B^4)}\big).
   \end{aligned}
\end{align}
In particular, this implies that
\begin{align}
\label{d*R3+pRbd}
\begin{aligned}
    \big\|d*_g \vec{\mca R}_3\big\|_{W^{-1,(\frac 43,2)}(B_{1/2}(0))}
    &\le C(\La)  \big\| \vec{\mca R}_3\big\|_{L^{\frac 43,2}(B_{1/2}(0))}\\
    &\le C(\La,\bc)\big(\|\g^2\bP\|_{L^{4,2}(B^4)}+\|\g^3\bP\|_{L^2(B^4)}\big)\big(\|\g \bH\|_{L^2(B^4)}+\|\bII\|_{L^4(B^4)}\big).
    \end{aligned}
\end{align}
In addition, by~\eqref{estLSR} and \eqref{defuetabul2}, we have 
\begin{align}\label{u2nfbd}
    \|\vec u\|_{L^{2,\nf}(B_{1/2}(0))}&\le  C(\La,\bc)\big(\|\g \bH\|_{L^2(B^4)}+\|\bII\|_{L^4(B^4)}\big).
\end{align}
Also, the equation~\eqref{eq:d*du=d*R3+2} gives in $W^{-2,(2,\nf)}_{\loc}\big(B^4,\bwe^2 \R^m\ot \bwe^4 \R^4\big)$ that
\begin{align*}
    \p_{j}\big(g^{ij}(\det g)^{\frac 12} \p_{i}\vec u \big)\,dx^1\we \cdots \we dx^4=d*_g \vec{\mca R}_3.
\end{align*}
Then it follows from Lemma~\ref{lm:ellcacciolp} that $\vec u\in W^{1,(4/3,2)}_{\loc}\big(B^4,\bwe^2\R^m\big)$. Moreover, the elliptic estimate~\eqref{eq:ellcacciop1} together with Remark~\ref{rm:smW1ncoe} implies existence of $\vae=\vae(\La)>0$ such that, if $ \|\g g\|_{L^{4,2}(B^4)}\le \vae$, then for any $\al\in (0,3)$ and all $r\in (0,\frac 14]$, it holds that
\begin{align}\label{guL432bd}
    \|\g \vec u\|_{L^{\frac 43,2}(B_r(0))}
    \le C(\La,\al) \Big(\big\|d*_g \vec{\mca R}_3\big\|_{W^{-1,(\frac 43,2)}(B_{1/2}(0))}+r^{\al}\|\vec u\|_{L^{2,\nf}(B_{1/2}(0))}\Big).
\end{align}
Now we set $\vae_0\coloneq\vae$ and assume~\eqref{smaIIggvae} holds. The estimates~\eqref{d*R3+pRbd}--\eqref{guL432bd} imply that for all $r\in (0,\frac 14]$,
\begin{align}\label{guL432est}
\begin{aligned}
    \|\g \vec u\|_{L^{\frac 43,2}(B_r(0))}
    &\le   C(\La,\bc,\al)\big( \|\g^2\bP\|_{L^{4,2}(B^4)}+\|\g^3\bP\|_{L^2(B^4)}+r^{\al}\big)\big(\|\g \bH\|_{L^2(B^4)}+\|\bII\|_{L^4(B^4)}\big).
\end{aligned}
\end{align}
Then by~\eqref{u2nfbd},~\eqref{guL432est}, and Lemma~\ref{lm:lorHold}, for all $r\in (0,\frac 14]$, we have
\begin{align}
 \label{estndd*ures}\begin{aligned}
     &\big\|\pi_{\bn}\mkt d^{*_g}( \vec u \,\res d\bP)\big\|_{L^{\frac 43,2}(B_r(0))}\\
    &\le   C(\La) \Big( \|\g \vec u\|_{L^{\frac 43,2}(B_r(0))} +\|\g^2 \bP\|_{L^{4,2}(B_r(0))} \|\vec u\|_{L^{2,\nf}(B_r(0))}\Big)\\
    &\le C(\La,\bc,\al)\big( \|\g^2\bP\|_{L^{4,2}(B^4)}+\|\g^3\bP\|_{L^2(B^4)}+r^{\al}\big)\big(\|\g \bH\|_{L^2(B^4)}+\|\bII\|_{L^4(B^4)}\big).
\end{aligned}
 \end{align}    
 Finally, combining~\eqref{eq:ndotd*uresdp2}--\eqref{estRL432} and~\eqref{estndd*ures} gives $\lap_g \bH\in L^{4/3,2}_{\loc}(B^4,\R^m)$. Moreover, applying Remark~\ref{rm:g^2Pdecom}, for all $r\in (0,\frac 14]$, we have
 \begin{align*}
     &\|\lap_g \bH\|_{L^{\frac 43,2}(B_r(0))} \\
     &\le  \big\|\pi_{\bn}\mkt d^{*_g}( \vec u \,\res d\bP)\big\|_{L^{\frac 43,2}(B_r(0))}+\|\vec{\mca R}_4\|_{L^{\frac 43,2}(B^4)} \\
     &\le C(\La,\bc,\al)\big( \|\g^2\bP\|_{L^{4,2}(B^4)}+\|\g^3\bP\|_{L^2(B^4)}+r^{\al}\big)\big(\|\g \bH\|_{L^2(B^4)}+\|\bII\|_{L^4(B^4)}\big)\\
     &\le C(\La,\bc,\al)\big( \big\||\g g|+|\g \bn|\big\|_{L^{4,2}(B^4)}+\big\||\g^2 g|+|\g^2 \bn|\big\|_{L^2(B^4)}+r^{\al}\big)\big(\|\g \bH\|_{L^2(B^4)}+\|\bII\|_{L^4(B^4)}\big).\qedhere
 \end{align*}
\end{proof}
Lemma~\ref{lm:ellcacciolp} together with Theorem~\ref{th:lapHL432} gives the following corollary.
\begin{Co}\label{morestHgn}
     Suppose $\vec{\Phi}\in \mathcal{I}_{1,2}(B^4, \R^m)$ is a weak critical point of $E$ and satisfies~\eqref{weaimmconbP}. Then for any $\beta\in (0,1)$, there exists $\vae_1=\vae_1(\La,\bc,\beta)\in (0,\vae_0)$ such that the following holds for all $r\in (0,1]$: Assume that 
     \begin{equation}\label{smaIIggvae1}
      \big\||\g g|+|\g \bn|\big\|_{L^{4,2}(B^4)}+\big\||\g^2 g|+|\g^2 \bn|\big\|_{L^2(B^4)}\le \vae_1.
    \end{equation}
     Then we have
     \begin{equation}\label{HgHmornor}
         \|\bII\|_{L^4(B_r(0))}+ \|\g \bH\|_{L^{2}(B_r(0))}\le C(\La,\bc,\beta) \mko r^\beta \big(\|\bII\|_{L^4(B^4)}+ \|\g \bH\|_{L^{2}(B^4)}\big).
     \end{equation}
\end{Co}
\begin{proof}
 For $0<r\le 1$, we set \begin{align}\label{defN1r}\begin{aligned}
          &\mca N_1(r)\coloneq  \|\bH\|_{L^4(B_{r}(0))}+ \|\g \bH\|_{L^{2}(B_{r}(0))},\\[0.3ex]
          & \mca N_2(r)\coloneq  \|\bII\|_{L^4(B_{r}(0))}+ \|\g \bH\|_{L^{2}(B_{r}(0))}.
          \end{aligned}
      \end{align}
 Assume \eqref{smaIIggvae1} holds with $\vae_1\in (0,\vae_0)$ to be determined below. Then the estimate~\eqref{est:lapHL432} (with $\al=1$) implies that for all $0\le  r\le \frac 14$, there holds
  \begin{align}\label{bdlapHN2}
      \|\lap_g \bH\|_{L^{\frac 43,2}(B_r(0))}
      \le C(\La,\bc)\mko( \vae_1+r)\mko\mca N_2(1).
  \end{align}
  By Lemma~\ref{lm:LpembW-1} and~\eqref{*estlor}, we have
    \begin{align}
    \label{lapHW-12bd}\begin{aligned}
        \big\|\p_i\big(g^{ij}(\det g)^{\frac 12}\p_j \bH\big)\big\|_{W^{-1,2}(B^4)}
        &= \|*_g \lap_g \bH\|_{W^{-1,2}(B^4)}\\
         &\le C(\La) \|\lap_g \bH\|_{W^{-1,2}(B^4)}\\
        &\le C(\La) \|\lap_g \bH\|_{L^{4/3,2}(B^4)}.
        \end{aligned}
    \end{align}
    Then by Lemma~\ref{lm:ellcacciolp} (with $\alpha=1$), \eqref{lapHW-12bd} and Remark~\ref{rm:smW1ncoe}, there exists $\vae=\vae(\La)>0$ such that, if $
               \|\g g\|_{L^{4,2}(B^4)}\le \vae$, then for all $r\in (0,\frac 12]$, it holds that
    \begin{equation}\label{HgHlapH}
          \mca N_1(r)\le C(\La)\big(  \|\lap_g \bH\|_{L^{4/3,2}(B^4)}+r\mko \mca N_1(1)\mko \big).
           \end{equation}
We choose $\vae_1\in \big(0,\min(\vae,\vae_0)\mko\big)$ so that~\eqref{HgHlapH} applies under the assumption~\eqref{smaIIggvae1}. Next, we estimate $\mca N_2(r)$. By the approximation result for weak immersions in~\cite{LaMaRi25}, Lemma~\ref{lm:Hdnstrequ} remains valid for the weak immersion $\bP$. Hence, we have
     \begin{align*}
          d\bn+4\mko d\bP\we (\bn\,\res \bH)= \frac 12 *_g\star\mko \Big( \big(d\bn\sbul^{(m-5)} \bn\big) \ovs{\ovwe}{\we}d\bP\ovs{\ovwe}{\we}d\bP\Big).
     \end{align*}
     Applying $d\,*_g$ to the above identity and defining $\ovs{\sbul}{\we}^{(m-5)}$ as in~\eqref{not-overset} yields
     \begin{align}\label{d*str4dnH}
      d*_g d\mko \bn+4\mko d*_g\big(d\bP\we (\bn\,\res \bH)\mko\big)=\frac{1}{2}\star \lf( \Big(d\bn\ovs{\sbul}{\we}^{(m-5)} d\bn\Big)\ovs{\ovwe}{\we}d\bP\ovs{\ovwe}{\we}d\bP\rg).
     \end{align}
     Hence by~\eqref{smaIIggvae1} and the embedding $L^2(B^4)\hookrightarrow W^{-1,4}(B^4)$, we obtain
     \begin{align}\label{lapnW-14}
     \begin{aligned}
         \|d*_gd\mko \bn\|_{W^{-1,4}(B^4)}
         &\le 4\mko \Big\|d*_g\big(d\bP\we (\bn\,\res \bH)\mko\big)\Big\|_{W^{-1,4}(B^4)}+C\mko\Big\| \Big(d\bn\ovs{\sbul}{\we}^{(m-5)} d\bn\Big)\ovs{\ovwe}{\we}d\bP\ovs{\ovwe}{\we}d\bP \Big\|_{L^2(B^4)}\\
         &\le C(\La)\big(\|\bH\|_{L^4(B^4)}+\vae_1\|\bII\|_{L^4(B^4)} \big).
      \end{aligned}
       \end{align}
     Fix $\beta\in (0,1)$ and $\beta_0\coloneq \frac{\beta+1}2$. As in~\eqref{HgHlapH}, combining Lemma~\ref{lm:ellcacciolp} (with $\al=\beta_0$) and~\eqref{lapnW-14} with Remark~\ref{rm:smW1ncoe}, after possibly decreasing $\vae_1=\vae_1(\La,\bc,\beta)>0$, we have for all $r\in (0,1]$,
     \begin{align}\label{gnL4est}
         \begin{aligned}
         \|\bII\|_{L^4(B_r(0))}
         &\le C(\La)\mko\|\g\bn\|_{L^4(B_r(0))}\\
         &\le C(\La,\beta) \big(   \|d*_gd\mko \bn\|_{W^{-1,4}(B^4)}+r^{\beta_0}\mko \|\bII\|_{L^4(B^4)} \big)\\[0.2ex]
             &\le C(\La,\beta)\big(\|\bH\|_{L^4(B^4)}+(\vae_1+r^{\beta_0})\|\bII\|_{L^4(B^4)} \big).
         \end{aligned}
     \end{align} 
     It follows that 
     \begin{equation}\label{bdN2N1}
         \mca N_2(r)\le C(\La,\beta)\big(\mca N_1(1)+(\vae_1+r^{\beta_0}) \mca N_2(1)\mko\big).
     \end{equation}
    Applying the dilation~\eqref{dilgnH}--\eqref{gggndilinv} to the estimates~\eqref{bdlapHN2},~\eqref{HgHlapH}, and~\eqref{bdN2N1}, for all $0<4r\le \rho\le1$, we have
    \begin{subnumcases}{}
        \|\lap_g \bH\|_{L^{ 4/3,2}(B_r(0))}
      \le C(\La,\bc)\mko( \vae_1+r/\rho)\mko\mca N_2(\rho),\label{lapHN2}\\[0.3ex]
       \mca N_1(r)\le C(\La)\big(  \|\lap_g \bH\|_{L^{4/3,2}(B_\rho(0))}+(r/\rho) \mko \mca N_1(\rho)\mko \big),\label{N1lapHN1}\\[0.3ex]
       \mca N_2(r)\le C(\La,\beta)\Big(\mca N_1(\rho)+\big(\vae_1+(r/\rho)^{\beta_0}\big) \mca N_2(\rho)\Big).\label{N2N1+N2}
    \end{subnumcases}
      Fix a positive integer $s$ such that $\beta<\frac{(s-2)\beta_0}{s}$. By~\eqref{N1lapHN1} and~\eqref{N2N1+N2}, we obtain that for all $r\in (0,\frac 14]$, 
\begin{align}\label{HgHleer}\begin{aligned}
          &\mca N_2(r^s)+\mca N_1(r^{s-1})\\[0.3ex]
          &\le C(\La,\beta) \big(\mca N_1(r^{s-1})+(\vae_1+r^{\beta_0})\mca  N_2(r^{s-1})\mko\big)\\[0.3ex]
          &\le  C(\La,\beta)\lf(  \|\lap_g \bH\|_{L^{4/3,2}\lf(B_{r^{s-2}}(0)\rg)}+r\mkt \mca N_1(r^{s-2})+(\vae_1+r^{\beta_0})\mca  N_2(r^{s-1})\rg).
          \end{aligned}
    \end{align} 
    Since $s$ depends only on $\beta$, by~\eqref{lapHN2} and induction, we obtain a constant $C_0=C_0(\La,\bc,\beta)>0$ such that 
    \begin{align} 
          \mca N_2(r^s) +\mca N_1(r^{s-1})&\le C_0\big(\vae_1+r^{(s-2)\beta_0} \big) \mca N_2(1).
     \end{align}
     Since $\beta<\frac{(s-2)\beta_0}{s}$, we can choose $r_0=r_0(\La,\bc,\beta)\in (0,\frac 14]$ such that 
     \begin{align}\label{C0r0albeta}
         C_0 \mko r_0^{(s-2)\beta_0}\le \frac 12\mko r_0^{s\beta}. 
        \end{align}
     Now we set $r_1\coloneq r_0^s$ and $\vae_1=\vae_1(\La,\bc,\beta)>0$ such that $\vae_1\le\min \big\{\vae,\vae_0, \frac{r_1^\beta}{2C_0}\big\}$. Then the estimate~\eqref{HgHleer} implies that under the assumption~\eqref{smaIIggvae1}, we have
     \begin{align}\label{HgHlerbe}
          \mca N_2(r_1)\le r_1^\beta \mca N_2(1).
     \end{align}
    Finally, using the dilation~\eqref{dilgnH}--\eqref{gggndilinv}, we may apply the estimate~\eqref{HgHlerbe} to $B_{r_1}(0)$, $B_{r_1^2}(0)$, $B_{r_1^3}(0),\dots$ For arbitrary $r\in (0,1]$ with $r_1^{k}< r \le r_1^{k-1} $, $k\in \N^+$, we obtain from \eqref{HgHlerbe} that
     \begin{align*}
          \mca N_2(r)&\le  r_1^{\beta(k-1)}\mca N_2(1)
           \le r_1^{-\beta}\mko r^{\beta}\mca N_2(1).
     \end{align*}
     This completes the proof.
\end{proof}

Combining the Morrey estimates for $\bH,\,\g \bH,$ and $\lap_g \bH$ with the Euler--Lagrange equation, we are now ready to prove the main theorem.
\begin{hproof5}
Let $\bP_0\in \mathcal{I}_{1,2}(\Si^4,\R^m)$ be a critical point of $E_{\bc}$. By~\cite[Thm.~4.1]{MarRiv2}, for any $p\in \Si^4$, there exist $g_{\bP_0}$-harmonic coordinates on a neighborhood of $p$ such that, under these coordinates, it holds that $g^{ij}\in W^{2,(2,1)}\hookrightarrow C^0$. For this reason, it suffices to consider $\bP\in \mathcal{I}_{1,2}(B^4,\R^m)$ being a critical point of $E_{\bc}$ and satisfying~\eqref{weaimmconbP} and~\eqref{smaIIggvae}, and the coordinate functions $\{x^j\}$ are $g_{\bP}$-harmonic. Then we have
\begin{equation}\label{harcoreq}
    \forall\, 1\le j\le 4, \qquad \p_i(g^{ij}\big(\det g)^{1/2} \big)=0.
\end{equation}
By Corollary~\ref{morestHgn}, for any $\beta\in (0,1)$, we have
\begin{align}\label{morHgHapp}
       \sup_{x\in B_{1/2}(0), r\le \frac 12} r^{-\beta} \big(\|\bII\|_{L^4(B_{r}(x))}+\|\g \bH\|_{L^2(B_r(x))}\big)<\nf.
\end{align}
The standard Riesz potential estimate (see \cite[Thm.~3.2]{Adams75}) then gives $\bH\in L^{\frac {4-2\beta}{1-\beta}}(B_{1/4}(0))$. Since the preceding argument is local and $\beta\in (0,1)$ is arbitrary, a covering implies $\bH\in L_{\loc}^p(B^4)$ for any $p<\nf$. Now by~\eqref{harcoreq}, we have
\begin{align}\label{ellgraf=H}
    4\mko \bH=\lap_g\bP=g^{ij} \p_i\p_j\bP.
\end{align}
Since $g^{ij}\in C^0(B^4)$, by~\eqref{ellgraf=H} and \cite[Thm.~4.1]{Chiarenza93}, we obtain that
\begin{align}\label{phiW2p}
    \forall\, p<\nf,\qquad \bP\in W^{2,p}_{\loc}(B^4,\R^m).
\end{align} 
Combining Theorem~\ref{th:lapHL432} with~\eqref{morHgHapp}, we have
\begin{align*}
       \sup_{x\in B_{1/2}(0), r\le \frac 1{16}} r^{-\beta} \|\lap_g \bH\|_{L^{\frac 43,2}(B_r(x))}\le C(\La,\bc)  \sup_{x\in B_{1/2}(0), r\le \frac 18}r^{-\beta} \big(\|\bII\|_{L^4(B_{4r}(x))}+\|\g \bH\|_{L^2(B_{4r}(x))}\big)<\nf.
\end{align*}
The Riesz potential estimates \cite[Thm.~3.1]{Adams75} and~\cite[Eqs.~(1.3)--(1.5)]{Mingi11} then imply $*_g\,\lap_g \bH\in W^{-1,q_0}(B_{1/2}(0))$ for some $q_0\in (2,4)$. Since the preceding argument is local, a covering implies $*_g\,\lap_g\bH\in W^{-1,q_0}_{\loc}(B^4)$, that is,
\begin{align}\label{lapHW-1s0}
    \p_i\big(g^{ij}(\det g)^{\frac 12}\p_j \bH\big)\in W^{-1,q_0}_{\loc}(B^4,\R^m).
\end{align}
Since $g^{ij}\in  C^0(B^4)$, by~\cite[Thm.~3.1]{Byun05} we have $\bH\in W^{1,q_0}_{\loc}(B^4)$. Combining~\cite[Thm.~2.4.4]{Maugeri00} with~\eqref{phiW2p} then gives
\begin{align}\label{phiW3s0}
    \bP\in W^{3,q_0}_{\loc}(B^4,\R^m).
\end{align}
Now we return to Proposition~\ref{sysestLSR} and repeat the argument using the improved regularities established above. 
Let $\vec V$ be as in~\eqref{defVmase}. Arguing as in~\eqref{dpdHdHL1}--\eqref{*VW-22+L1est} yields
    $*_g\,\vec V\in W^{-2,q_0}_{\loc}(B^4)$.  Retaining the definitions of $\vec L$, $S$, $\bR$ from~\eqref{sys-LRS}, and applying Corollaries~\ref{co-Hod-decw-1p}--\ref{Co:Hod-declp}, we obtain that $\bL\in W^{-1,q_0}_{\loc}(B^4)$ and $S,\bR\in L^{q_0}_{\loc}(B^4)$. 
    By~\eqref{remtotptbd}, ~\eqref{ptbdvts2} (as used in the proof of Theorem~\ref{th:lapHL432}) and~\eqref{phiW2p}, we deduce $ \mca {\vec R}_3, \vec{\mca R}_4\in L^q_{\loc}(B^4)$  for all $q\in (2,q_0)$. \\
    
 The equations~\eqref{eq:d*du=d*R3+2}--\eqref{eq:ndotd*uresdp2} then imply that for any $q\in (2,q_0)$, we have $\vec u\in W^{1,q}_{\loc}(B^4)$ and \begin{equation}\label{lapHLs}
     \lap_g\mko \bH\in L_{\loc}^q(B^4,\R^m).
\end{equation}
In particular, the Sobolev embedding gives $\lap_g\bH\in W^{-1,q}_{\loc}(B^4)$ for any $q\in (2,4)$. By repeating the argument from~\eqref{lapHW-1s0} to~\eqref{lapHLs}, we obtain $\lap_g\bH\in L^q_{\loc}(B^4)$ for any $q\in (2,4)$. 
Since~\eqref{phiW2p} implies $g\in W^{1,p}(B^4)$ for any $p<\nf$ and $\g \bH\in L^{q}(B^4)$, by the proof of \cite[Thm.~8.8]{Gilbarg01}, we obtain $\bH\in W^{2,2}_{\loc}(B^4)$. 
The harmonic coordinate condition~\eqref{harcoreq} implies that for any $q\in (2,4)$,
$$
g^{ij}\mko\p_i\p_j \bH= \lap_g\bH\in L^q_{\loc}(B^4,\R^m).$$
Hence by \cite[Thm.~4.1]{Chiarenza93}, we have $\bH\in W^{2,q}_{\loc}(B^4)$ for any $q\in (2,4)$. 
It follows that $\g \bH \in L^p_{\loc}(B^4)$ for any $p<\nf$. Arguing as in~\eqref{phiW3s0}, we obtain 
\begin{align}\label{phiW3p}
    \forall\, p<\nf,\qquad \bP\in W^{3,p}_{\loc}(B^4,\R^m).
\end{align}  
Since $\bH\in W^{2,q}_{\loc}(B^4)$ for any $q\in (2,4)$, differentiating~\eqref{ellgraf=H} and combining~\cite[Thm.~2.4.4]{Maugeri00} with~\eqref{phiW3p} further yields \begin{equation}\label{phiW4s}
    \forall\, 2<q<4,\qquad \bP\in W^{4,q}_{\loc}(B^4,\R^m).
\end{equation}\\
Fix $q_1\in (2,4)$. Now we prove by induction on $k\ge 4$ that $\bP\in W^{k,q_1}_{\loc} (B^4)$ for all $k\in \N$. Assume $k\ge 4$ and $\bP\in W^{k,q_1}_{\loc}(B^4)$. We use the Euler--Lagrange equation in~\eqref{ELequ}, which is equivalent to
\begin{align}\label{equicondia}
    d*_g d\big(\lap_g \bH\big)=-2\mko d*_g \bigg(\vec l_0+\sum_{s=1}^8c_s\vec l_s \bigg)\qquad \text{in }\mca D'\big(B^4,\R^m\ot \bwe^4\R^4\big).
\end{align}
The induction hypothesis gives $\bII_{ij}\in W^{k-2,q_1}_{\loc}(B^4)\hookrightarrow W^{k-3,\frac{4q_1}{4-q_1}}_{\loc}(B^4)$, hence by~\eqref{defva},~\eqref{condvrsls} and the polynomial structure as in~\eqref{defl4}, we have
\begin{equation*}
    \vec l_0+\sum_{s=1}^8c_s\vec l_s\in W^{k-4,q_1}_{\loc}(B^4,\R^m\ot \R^4).
\end{equation*}
Since $g^{ij}\in W^{k-1,q_1}_{\loc}(B^4)\hookrightarrow C^{k-3}_{\loc}(B^4)$ by induction hypothesis, we obtain that the right-hand side of~\eqref{equicondia} lies in $W^{k-5,q_1}_{\loc}(B^4)$. Let $\ga_1$ be a multi-index with $|\ga_1|=k-4$. Since $\lap_g \bH \in W^{k-4,q_1}_{\loc}(B^4)$ and $g^{ij}\in W^{k-1,q_1}_{\loc}(B^4)$, applying $\p^{\ga_1}$ to~\eqref{equicondia} yields 
$$
    d*_g d\big(\p^{\ga_1}(\lap_g \bH)\mko\big)\in W^{-1,q_1}_{\loc}\big(B^4, \R^m\ot \bwe^4\R^4\big).
$$ 
Combining~\cite[Thm.~4.1]{laMan20} with~\cite[Thm.~1.5]{Byun05} then implies $\p^{\ga_1}( \lap_g \bH)\in W^{1,q_1}_{\loc}(B^4)$. Hence, we have 
\begin{equation} \label{lapHWk-3q}
    g^{ij}\mko\p_i\p_j\bH= \lap_g \bH\in W^{k-3,q_1}_{\loc}(B^4,\R^m).
\end{equation}
Since $\bH\in W^{k-2,q_1}_{\loc}(B^4)$ and $g^{ij}\in C^{k-3}_{\loc}(B^4)$, differentiating~\eqref{lapHWk-3q} $(k-4)$ times and applying~\cite[Thm.~2.4.4]{Maugeri00} yields $\bH\in W^{k-1,q_1}_{\loc}(B^4)$. Finally, by differentiating~\eqref{ellgraf=H} $(k-2)$ times and invoking~\cite[Thm.~2.4.4]{Maugeri00} again, we obtain 
$$
    \bP\in W^{k+1,q_1}_{\loc}(B^4,\R^m).
$$ 
This completes the induction and thus we have $\bP\in C^\nf(B^4)$.

Since~\eqref{ellgraf=H} together with~\eqref{equicondia} (where $g_{ij}=\p_i\bP\cdot \p_j\bP$) forms an analytic elliptic system for $\bP$ and $\bH$ of the type considered in~\cite{Morrey58}, it follows that $\bP$ is real-analytic. This completes the proof.
\end{hproof5}

\end{document}